\documentclass[12pt]{article}
\usepackage{amsmath,amsthm,amssymb,euscript,verbatim}
\newtheorem{theorem}{Theorem}[section]

\newtheorem{corollary}{Corollary}[section]
\newtheorem{remark}{Remark}[section]

\newtheorem{conjecture}{Conjecture}[section]
\newtheorem{definition}{Definition}[section]
\begin{document}
\title{\bf Taut and Dupin Submanifolds (Updated Version)}
\author{Thomas E. Cecil\thanks{Research supported by NSF
Grant No. DMS-9504535.}}
\maketitle
\begin{abstract}
This is an updated version of the paper \cite{CecMSRI} by the author which originally appeared in 1997.
The original paper was a survey of the closely related fields of taut and
Dupin submanifolds of Euclidean space, and this updated version includes many results in the field that have 
appeared since the publication of the original version. 
The emphasis is on stating
results in their proper context and noting areas for future research, and
relatively few proofs are given. The important class of isoparametric
hypersurfaces is surveyed in detail, as is the relationship between
the two concepts of taut and Dupin.  Also included is a brief introduction to
submanifold theory in Lie sphere geometry which is needed to state
many known results on Dupin submanifolds accurately.  The paper
concludes with detailed descriptions of the main known classification
results for both Dupin and taut submanifolds.  
\end{abstract}

\noindent
In a book published in 1822, Dupin \cite{D} determined which surfaces $M$
embedded in Euclidean 3-space ${\bf R}^3$ can be obtained as the
envelope of the family of spheres tangent to three fixed spheres.
These surfaces, known as the {\em cyclides of Dupin}, can all
be constructed by inverting a torus of revolution, a circular
cylinder, or a circular cone in a metric sphere  in ${\bf R}^3$ .  The cyclides of Dupin
were studied extensively in the nineteenth century (see, for example,
Cayley \cite{Cay}, Liouville \cite{Lio}, Maxwell \cite{Max}), and see Cecil-Ryan \cite[pp. 151--166]{CR7}
for a survey.
The cyclides of Dupin have several other important characterizations.  They
are the only surfaces $M$ in  ${\bf R}^3$ whose focal set consists
of two curves, which must, in fact, be a pair of focal conics.
This is equivalent to requiring that $M$ has two distinct principal
curvatures at every point, each of which is constant along each of its
corresponding lines of curvature.  It is also equivalent to the
condition that all lines of curvature in both families are circles
or straight lines.

The cyclides of Dupin re-appeared in modern differential geometry in a paper
published in 1970 by Banchoff \cite{Ban1}.  He considered 
compact surfaces
$M$ embedded in ${\bf R}^3$ with the property that every metric
sphere in ${\bf R}^3$ cuts $M$ into at most two pieces, i.e.,
the {\em spherical two-piece property (STPP)}.  For surfaces,
the {\em STPP} 
is equivalent to requiring that $M$ be {\em taut}, i.e., every
non-degenerate Euclidean distance function 
$L_p(x) = | p-x |^2$,
$p \in  {\bf R}^3$,
has the minimum number of critical points required on  by
the Morse inequalities on $M$.  Banchoff showed that tautness implies
that $M$ must be a metric sphere or a cyclide of Dupin in ${\bf R}^3$, and the
close link between the notions of taut and Dupin submanifolds was established.

Following the terminology of Pinkall \cite{P2}, a hypersurface $M$ in  ${\bf R}^n$ is said to be {\em Dupin}
if along each curvature surface, the corresponding principal
curvature is constant.  A Dupin hypersurface $M$
is called {\em proper Dupin}
if the number of distinct principal curvatures is constant on $M$.
These concepts can both be generalized in a natural way to submanifolds
of codimension greater than one in  ${\bf R}^n$.  A fundamental
result in the theory due to Pinkall \cite{P4} (and independently to Miyaoka \cite{Mi8}
in the case of codimension one) is that a taut submanifold
must be Dupin, but not necessarily proper Dupin.  Conversely, the work of
Thorbergsson \cite{Th1} and Pinkall \cite{P4} shows
that a compact proper Dupin submanifold embedded in  ${\bf R}^n$
must be taut. 

This paper is a survey of the major results on taut and Dupin
submanifolds.  We concentrate on stating the results in their
proper context and noting areas for future research and give
very few proofs.  In particular, we do not repeat proofs
of many fundamental results in the field which can be found
in the books \cite{CR7}, \cite{CR8}, co-authored by the author and Patrick J. Ryan.   
We will not attempt to cover the
related field of tight immersions, and the reader is referred to the
paper by Banchoff and K\"{u}hnel  \cite{BK} for a survey of that field.

Isoparametric submanifolds of Euclidean space ${\bf R}^n$ or the unit sphere $S^n \subset {\bf R}^{n+1}$ are both taut and Dupin, and they play an important role in the theory.  These will be reviewed in Section \ref{sec:isoparametric-submanifolds} of this paper, and the reader
is also referred to the survey articles of Terng \cite{Te5},
Thorbergsson \cite{Th6}, Cecil \cite{Cec10}, Chi \cite{Chi-survey}, and the book
 \cite[pp. 85--184]{CR8} of Cecil and Ryan. 

There is also an
extensive amount of research on real hypersurfaces
with constant principal curvatures in complex
space forms, which is
surveyed in  the paper by Niebergall and Ryan \cite{NieR}, and in the book \cite[pp. 343--551]{CR8} of Cecil and Ryan.

We now give a brief overview 
of the contents of this paper.
In Section \ref{sec:taut-submanifolds}, we review the critical point theory and
submanifold theory needed to formulate the definition of
a taut submanifold, and we list some basic results and methods
for constructing taut embeddings.  In Section \ref{sec:isoparametric-submanifolds} we list the primary 
known results for isoparametric hypersurfaces in spheres, which
play an important role in the theory of taut and Dupin hypersurfaces.

In Section \ref{sec:dupin-submanifolds}, we give the definition of a Dupin submanifold and review
Pinkall's standard local constructions of proper Dupin
hypersurfaces with an arbitrary number of distinct principal
curvatures and respective multiplicities.  In Section \ref{sec:relationship-taut-dupin}, we discuss
the relationship between the taut and Dupin conditions in
detail.

Many of the main classifications of proper Dupin submanifolds
are done in the context of Lie sphere geometry. In Section \ref{sec:submanifolds-lie-geometry},
we give a brief introduction to this theory in order
to be able to explain these classifications accurately.

Section \ref{sec:compact-proper-dupin} is a survey of the known results on compact proper
Dupin hypersurfaces.  Thorbergsson \cite{Th1} applied the
work of M\"{u}nzner \cite{Mu}--\cite{Mu2} to show that
the number $g$ of distinct principal curvatures of such a
hypersurface must be $1,2,3,4$ or 6, the same as for an isoparametric
hypersurface in a sphere.  
For some time, it was 
conjectured that every compact proper 
Dupin hypersurface is equivalent by a Lie
sphere transformation to an isoparametric hypersurface.
However, this is not the case, as examples constructed
by Pinkall and Thorbergsson \cite{PT}, and by Miyaoka and
Ozawa \cite{MO}, demonstrate.  We describe these examples in
detail.

In Section \ref{sec:local-dupin}, we study the local classifications of proper Dupin
hypersurfaces which have been obtained using Lie sphere
geometry.  We describe the known results and mention 
areas for further research.

Finally, in Section \ref{sec:classifications-taut-submanifolds}, we survey the known classifications of taut
embeddings.
To some extent, this section can be
read independently from the rest of the paper, although some references
to the previous sections are necessary.

I  would like to thank my collaborators, Quo-Shin Chi, Gary Jensen, and Patrick Ryan, for their many insights on this material over the years.
I would also like to thank Thomas Banchoff, Peter Breuer, S.-S. Chern,  Wolfgang K\"{u}hnel,  Nicolaas Kuiper,
Martin Magid, Katsumi Nomizu,
Chuu-Lian Terng,
and Gudlaugur Thorbergsson for their comments on earlier
versions of this paper.

\section{Taut Submanifolds}
\label{sec:taut-submanifolds}

We begin with a brief review of the critical point theory
and submanifold theory needed
to formulate the definition of tautness.
In this paper, all manifolds are assumed to be  connected unless
explicitly stated otherwise.  Let $M$ be a smooth,
connected $n$-dimensional manifold, and let $\phi$ be a smooth
real-valued function defined on $M$.  A point $x \in M$ is a
{\em critical point} of $\phi$ if the differential $\phi_*$ is
zero at $x$.  The critical point $x$ is {\em non-degenerate}
if the Hessian $H$ of $\phi$ is a non-degenerate bilinear form
at $x$, and otherwise it is said to be {\em degenerate}.  The
{\em index} of a non-degenerate critical point $x$ is equal to
the index of $H$ as a bilinear form, i.e., the dimension of 
a maximal subspace on which $H$ is negative-definite.  The
function $\phi$ is called a {\em Morse function} or
{\em non-degenerate function}, if it has only non-degenerate 
critical points on $M$.

Let $\phi$ be a Morse function on $M$ such that the set
\begin{displaymath}
M_r(\phi) = \{x \in M \mid \phi(x) \leq r \}
\end{displaymath}
is compact for all $r \in {\bf R}$.  Of course, this is true
for any Morse function on a compact manifold $M$.  Let
$\mu_k(\phi,r)$ be the number of critical points of $\phi$
of index $k$ on $M_r(\phi)$.  If $M$ is compact, let
$\mu_k(\phi)$ denote the number of critical points of index
$k$ on $M$.  For a field ${\bf F}$, let
\begin{displaymath}
\beta_k(\phi,r,{\bf F}) = {\rm dim}_{\bf F} H_k(M_r(\phi);{\bf F})
\end{displaymath}
be the $k$-th ${\bf F}$-Betti number of $M_r(\phi)$, and let
$\beta_k(M;{\bf F})$ be the $k$-th ${\bf F}$-Betti number of a compact $M$.
Then the {\em Morse inequalities} (see, for example, Morse-Cairns
\cite[p. 270]{MC}) state that
\begin{displaymath}
\mu_k(\phi,r) \geq \beta_k(\phi,r,{\bf F})
\end{displaymath}
for all ${\bf F}, k, r,$ and for a compact $M$,
\begin{displaymath}
\mu_k(\phi) \geq \beta_k(M;{\bf F})
\end{displaymath}
for all ${\bf F}, k.$  A Morse function $\phi$ on $M$ is said to be
{\em perfect} if $\phi$ has the minimum number of critical points
possible by the Morse inequalities, i.e., if there exists a field ${\bf F}$
such that
\begin{equation}
\label{eq:perfect-fn}
\mu_k(\phi,r) = \beta_k(\phi,r,{\bf F})
\end{equation}
for all $k, r.$  For a compact manifold $M$, this is equivalent to 
the condition $\mu_k(\phi) = \beta_k(M;{\bf F})$ for all $k$.  Equivalently,
it can be shown (see, for example, Morse-Cairns \cite[p. 260]{MC}) that a
Morse function $\phi$ on a compact manifold $M$ is perfect if there
exists a field ${\bf F}$ such that for all $k,r$ the map on homology
\begin{equation}
\label{eq:perfect-compact-M}
H_k(M_r(\phi);{\bf F}) \rightarrow H_k(M;{\bf F})
\end{equation}
induced by the inclusion of $M_r(\phi)$ in $M$ is injective.
This formulation has proven to be quite useful in the theory of
tight and taut immersions.

Let $f:M \rightarrow {\bf R}^n$ be a smooth immersion of a manifold $M$
into $n$-dimensional Euclidean space.  Since $f$ is an immersion,
it is an embedding on a suitably small neighborhood of any point
$x \in M$.  Thus, for local calculations, we often identify the
tangent space $T_xM$ with its image $f_*(T_xM)$ under the
differential $f_*$ of $f$.  Suppose that $X \in T_xM$ and $\xi$
is a field of unit normal vectors to $f(M)$ defined on a
neighborhood of $x$.  Then we have the fundamental equation,
\begin{equation}
\label{eq:fundamental-eq}
D_X\xi = -A_{\xi}X + \nabla_X^{\perp}\xi,
\end{equation}
where $-A_{\xi}X$ is the component of $D_X\xi$ tangent to $M$,
and $\nabla_X^{\perp}\xi$ is the component normal to $M$.
Here $A_{\xi}$ is a symmetric tensor of type (1,1) on $M$ called the
{\em shape operator} determined by $\xi$, and $\nabla^{\perp}$ is
a covariant derivative operator 
in the normal bundle of $M$ called the {\em normal connection}.
The eigenvalues of $A_{\xi}$ are called the {\em principal
curvatures} of $A_{\xi}$.  When $f(M)$ is a hypersurface, a local
field of unit normal vectors $\xi$ is determined up to a sign.
In this case, the shape operator $A_{\xi}$ is often denoted
simply by $A$, and the eigenvalues of $A_{\xi}$ are determined
up to a sign, depending on the choice of $\xi$.  In that case,
these eigenvalues are called the principal curvatures of
$M$ or of $f$.

The {\em normal exponential map} $F$ from the normal bundle
$N(M)$ to ${\bf R}^n$ is defined by
\begin{displaymath}
F(x,\eta) = f(x) + \eta,
\end{displaymath}
where $\eta$ is a normal vector to $f(M)$ at $f(x)$.  A point
$p \in {\bf R}^n$ is called a {\em focal point of multiplicity}
$m$ {\em of} $(M,x)$, if 
$p=F(x,\xi)$ and the differential $F_*$ has nullity $m$
at $(x,\xi)$.  A point $p \in {\bf R}^n$ is called a focal
point of $M$ if $p$ is a focal point of $(M,x)$ for some
$x \in M$.  The set of all focal points of $M$ is called the
{\em focal set} of $M$.  Since $N(M)$ and  ${\bf R}^n$ have the same
dimension, Sard's Theorem implies that the focal set of $M$
has measure zero in  ${\bf R}^n$.  A direct computation
(see, for example, \cite[p. 34]{Mil}) shows that if
$p = F(x,t\xi)$, where $|\xi |= 1$, then $p$
is a focal point of $(M,x)$ of multiplicity $m$ if and only if
$1/t$ is a principal curvature of $A_{\xi}$ of multiplicity $m$.
In this paper, we often consider an immersion
$f:M \rightarrow S^n$ into the unit sphere $S^n$ in ${\bf R}^{n+1}$.
In that case, one can also define the notions of normal
exponential map and focal point in a manner analogous to the
definitions given here for submanifolds of Euclidean space.
 
A {\em Euclidean distance function} is a function 
$L_p:{\bf R}^n \rightarrow {\bf R}$ given by the formula, 
$L_p(q) = | p-q |^2$,
where $p \in {\bf R}^n$.
The level sets of $L_p$ are spheres centered at the point $p$.  Let
$f$ be an immersion of a smooth manifold $M$ into ${\bf R}^n$.
We consider the restriction of $L_p$ to $M$ defined by
$L_p(x) =  | p-f(x)|^2$.  It is well-known (see, for example,
Milnor \cite[pp. 33-38]{Mil}) 
that $L_p$ has a critical point at $x \in M$ if
and only if $p$ lies along the normal line to $f(M)$ at $f(x)$.
The critical point $x$ is degenerate precisely when $p$ is a
focal point of $(M,x)$.  If $L_p$ has a non-degenerate critical point
at $x$, then its index is the number of focal points of $(M,x)$
(counting multiplicities) on the line segment from $p$ to
$f(x)$.  Since the set of focal points of $f$ has measure zero
in ${\bf R}^n$, $L_p$ is a Morse function for almost all
$p \in {\bf R}^n$.  The immersion $f$ is said to be {\em taut}
if every Morse function of the form $L_p$ is perfect, i.e.,
there exists a field ${\bf F}$ such that
\begin{displaymath}
\mu_k(L_p,r) = \beta_k(M_r(L_p);{\bf F})
\end{displaymath}
for every Morse function of the form $L_p$ and for every $k,r$.
This definition makes sense for non-compact manifolds, and there do
exist taut immersions of non-compact manifolds, e.g., a circular
cylinder in ${\bf R}^3$, but most results deal with compact
manifolds.

As in the theory of tight immersions, there is a formulation of
tautness due to Kuiper in terms of \v{C}ech 
homology, which has proven to be 
very useful in establishing certain fundamental results (see the books 
of Cecil-Ryan \cite[pp. 113--125]{CR7}, \cite[pp. 40--45]{CR8} for more detail).  So far 
the field $F={\bf Z}_2$ has been sufficient for almost all
considerations, so we will use it almost exclusively here.  Recall that
a map $f:M \rightarrow {\bf R}^n$ is said to be {\em proper}
if $f^{-1}K$ is compact for every compact subset $K$ of
${\bf R}^n$.  Using \eqref{eq:perfect-compact-M} and the \v{C}ech theory, one can show that a
proper immersion $f$ of a manifold $M$ into ${\bf R}^n$ is taut
if and only if for every closed ball $B$ in ${\bf R}^n$, the
induced homomorphism
\begin{equation}
\label{eq:def-taut}
H_i(f^{-1}B) \rightarrow H_i(M)
\end{equation}
in \v{C}ech homology with ${\bf Z}_2$-coefficients is injective for
every $i$.    
The use of \v{C}ech homology allows one to use all closed
balls in ${\bf R}^n$ rather than only those determined by level
sets of non-degenerate distance functions.  Note that this formulation
of tautness makes sense even if $f$ is only assumed to be a
proper continuous map and $M$ a topological space.  In that case,
$f$ is called a {\em taut map}.

A few key facts which follow quickly from the definition are
the following.  First, a taut immersion must be an embedding.
In fact, this is true even if $f$ is only assumed to be 
{\em 0-taut}, i.e., the induced homomorphism \eqref{eq:def-taut} is injective
for $i=0$.  For a compact manifold $M$, 0-tautness is
equivalent to the {\em spherical two-piece property (STPP)}
of Banchoff \cite{Ban1} which requires that $f^{-1}\Omega$ is connected
for every closed ball, complement of an open ball or closed
half-space $\Omega$.  The {\em STPP} is also equivalent 
to the condition that every
Morse function of the form $L_p$ has exactly one local
maximum and one local minimum. This condition is equivalent to tautness
for compact manifolds of dimension 2 by the Morse inequalities.  

The {\em STPP} is quite strong,
and for a while, every known {\em STPP} embedding was actually
taut, but Curtin \cite{Cur} showed that there exist {\em STPP} 
embeddings that are not taut.  Specifically,
an embedding $f:M \rightarrow {\bf R}^n$
is said to be $k$-{\em taut} if the induced homomorphism \eqref{eq:def-taut}
is injective  for all $i \leq k$.  Curtin found substantial
embeddings of $S^n$ into $S^{n+d}, d \geq 1$, which are $k$-taut
but not $(k+1)$-taut for every $n \geq 3$ and every
$k \geq 0$ provided that $(d+1)(k+2)\leq n+1$.  He also produced
$k$-taut embeddings of manifolds other than spheres.  In a second
paper, Curtin \cite{Cur1} introduced a notion of tautness
for manifolds with boundary. 

As noted above, tautness can be studied for maps defined on spaces
which are not manifolds.  In fact, in his first paper on the {\em STPP},
Banchoff \cite{Ban1} determined all {\em STPP} subsets of the plane.
Later,
Kuiper \cite{Ku2} determined all taut subsets of ${\bf R}^2$
and all compact taut {\em ANR} subsets of ${\bf R}^3$. 

Next, a taut embedding $f$ of a compact manifold $M$ into
${\bf R}^n$ must be {\em tight}, i.e., every non-degenerate
linear height function $l_p(x) = \langle p,f(x) \rangle,$ for $p$
a unit vector in ${\bf R}^n$, is perfect.  (Here $\langle \ ,\ \rangle$ denotes the Euclidean inner product on ${\bf R}^n$.)
This is easily
shown using \v{C}ech homology, since a closed half-space can
be obtained as the limit of closed balls (see, for example, \cite[pp. 53--55]{CR8}). 
A map $f$ of a
compact topological space into ${\bf R}^n$ is said to have
the {\em two-piece property (TPP)} if $f^{-1}h$ is connected
for every closed half-space $h$, and of course, the {\em STPP} implies
the {\em TPP}. 
As it turns out,
tautness is a much stronger condition than tightness.

It is sometimes said that tautness is equivalent to the
combination of tight and spherical.  This is true in the following
sense.  First, suppose that $f$ is an embedding of a compact
manifold $M$ into ${\bf R}^{n+1}$ which lies in the unit
sphere $S^n$ in  ${\bf R}^{n+1}$, in which case we say that
$f$ is {\em spherical}.  Then if $f$ is a tight immersion into
${\bf R}^{n+1}$, it must also be taut, because the intersection
of any closed ball $B$ with $S^n$ can be realized as the
intersection of a closed half-space with $S^n$.
Note also that the distance in $S^n$ from $p$ to $f(x)$ is given
by the {\em spherical distance function} 
$d_p(x) = \cos^{-1} (l_p(x))$, which has the same critical points
as $l_p$.  Thus, for simplicity, we usually use linear height functions
rather than spherical distance functions in treating taut
submanifolds of $S^n$.
Next, if
$P_q:S^n - \{q\} \rightarrow {\bf R}^n$ is stereographic
projection with pole $q$ not in $f(M)$, then $P_q \circ f$
is a taut embedding of $M$ into  ${\bf R}^n$, since $P_q$ maps
a metric ball in $S^n$ to either a closed ball, the complement
of an open ball, or a closed half-space in  ${\bf R}^n$.  Thus,
$f$ is tight and spherical if and only if $P_q \circ f$
is taut. This was first observed by Banchoff \cite{Ban1}.

Hence, the theory of taut embeddings of compact 
manifolds is essentially the same whether one maps  into
${\bf R}^n$ or $S^n$, and we work with whichever ambient space
is more convenient for the problem at hand.
By the same type of reasoning, tautness is easily shown
to be invariant under {\em M\"{o}bius transformations}, i.e.,
conformal transformations of $S^n$ onto itself, since M\"{o}bius
transformations map hyperspheres to hyperspheres in $S^n$.

These considerations lead to a fundamental result on the bound
on the codimension of a taut embedding.  Recall that an
immersion $f:M \rightarrow {\bf R}^n$ is said to be
{\em substantial} if the image $f(M)$ does not lie in
any affine hyperplane in ${\bf R}^n$. 

Kuiper \cite{Ku1} showed that if
$f:M^k \rightarrow {\bf R}^n$ is a substantial {\em TPP}
immersion of a 
compact manifold $M^k$, then 
\begin{equation}
\label{eq:bound-on-codimension}
n \leq k(k+3)/2.
\end{equation} 
Furthermore, if equality holds in \eqref{eq:bound-on-codimension},
then $f$ must be a standard embedding 
of a real projective
space ${\bf RP}^k$ into a metric sphere $S^{n-1} \subset  {\bf R}^n$, up to a projective transformation of ${\bf R}^n$.
In the case where equality holds, the result is due to Kuiper  \cite{Ku1} for $k=2$, and
to Little and Pohl \cite{LP} for $k>2$ (see, for example, \cite[pp. 98--108]{CR7}).

\begin{remark}{Standard embeddings of projective spaces}
\label{rem:std-embeddingd-projective-spaces}

\noindent
{\rm The simplest case of a standard embedding of a projective space
is the well-known embedding of the real projective plane
${\bf RP}^2$ into a metric sphere $S^4 \subset {\bf R}^5$, which is sometimes called a {\em spherical Veronese
surface}, which we now describe.
Let $S^2$ be the unit sphere in ${\bf R}^3$ given by the equation
\begin{equation}
\label{eq:unit-sphere-r3}
x^2 + y^2 + z^2 = 1. 
\end{equation}
Define a map from $S^2$ into ${\bf R}^6$ by
\begin{equation}
\label{eq:Veronese-map-r6}
(x,y,z) \mapsto (x^2, y^2, z^2, \sqrt{2} yz, \sqrt{2} zx, \sqrt{2} xy).
\end{equation}
One can easily check that this map takes the same value on antipodal points of $S^2$, and so it induces a map $\phi: {\bf RP}^2 \rightarrow {\bf R}^6$.  An elementary calculation
then proves that $\phi$ is a smooth embedding of ${\bf RP}^2$.  Furthermore, if $(u_1,\ldots,u_6)$ are the standard coordinates on ${\bf R}^6$, then the image of $\phi$
lies in the Euclidean hyperplane ${\bf R}^5 \subset {\bf R}^6$ given by the equation,
\begin{equation}
\label{eq:Veronese-hyperplane}
u_1 + u_2 + u_3 = 1, 
\end{equation}
since $x^2 + y^2 + z^2 = 1$.  One can easily show further that $\phi$ is a substantial embedding into ${\bf R}^5$, and that the image of $\phi$ is contained in the unit
sphere $S^5 \subset {\bf R}^6$ given by the equation,
\begin{equation}
\label{eq:unit-sphere-r6}
u_1^2 + \cdots + u_6^2 = 1. 
\end{equation}
Thus, $\phi$ is a substantial embedding of ${\bf RP}^2$ into the 4-sphere $S^4 = S^5 \cap {\bf R}^5$.

The other standard embeddings of projective spaces are generalizations of the spherical Veronese surface.
For a detailed description of the standard embeddings of projective spaces (including projective spaces over the complex,
quaternion, and Cayley numbers), and a proof that they are taut,
see Tai \cite{Tai} or Cecil-Ryan \cite[pp. 74--83]{CR8}.}
\end{remark}

Since tautness implies tightness, Banchoff \cite{Ban1}
observed that the following basic result for compact manifolds follows from the work of Kuiper \cite{Ku1}, 
and Little-Pohl \cite{LP},
and then Carter and West \cite{CW1} extended it to non-compact
manifolds. (See also \cite[p. 124]{CR7}
for a proof.) Actually, in the compact case only the {\em STPP} 
is required,
and in the non-compact case, all that is required is that every
non-degenerate distance function $L_p$ have exactly one local
minimum and no local maxima on $M^k$.\\

\begin{theorem}
\label{thm:kuiper-bound-on-codimension}
Let $f:M^k \rightarrow {\bf R}^n$ be a substantial taut
embedding of a $k$-dimensional manifold.\\
{\rm (a)} If $M^k$ is compact, then $n \leq k(k+3)/2$. Furthermore, if $n = k(k+3)/2$,
then $f$ is a standard embedding of a real projective
space ${\bf RP}^k$.\\
{\rm (b)}   If $M^k$ is non-compact, then $n \leq k(k+3)/2 -1$.  Furthermore, if $n = k(k+3)/2 - 1$,
then $f(M^k)$ is the image under stereographic
projection of a standard embedding of a real projective space ${\bf RP}^k \subset S^n$, where the pole of the 
stereographic projection
is on the image of the standard embedding in $S^n$.
\end{theorem}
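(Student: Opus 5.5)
Part (a) follows from the results already quoted, using the fact that tautness implies tightness. If $f:M^k \rightarrow {\bf R}^n$ is a taut embedding of a compact manifold, then every closed half-space is a limit of closed balls. The \v{C}ech homology formulation \eqref{eq:def-taut} with $i=0$ then shows that $f^{-1}h$ is connected for every closed half-space $h$, so $f$ has the {\em TPP}; only the {\em STPP} is needed here. Since $f$ is substantial, Kuiper's bound \eqref{eq:bound-on-codimension} gives $n \leq k(k+3)/2$. In the equality case, Kuiper ($k=2$) and Little--Pohl ($k>2$) show that $f$ is a standard embedding of ${\bf RP}^k$ into a metric sphere $S^{n-1}\subset{\bf R}^n$, up to a projective transformation of ${\bf R}^n$. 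To remove the projective ambiguity I would use the sphericality that tautness forces. The image of a standard embedding under a projective map is taut only if the Euclidean balls pull back correctly, and after inverse stereographic projection into $S^n$ the image must be a substantial tight spherical embedding in one higher dimension. So I would argue via the reduction below: a taut embedding into ${\bf R}^n$ gives, by inverse stereographic projection, a tight spherical embedding into ${\bf R}^{n+1}$ that is substantial in ${\bf R}^{n+1}$ unless $f(M)$ already lies in a sphere. Since $n+1 > k(k+3)/2$ is impossible, $f(M)$ must lie in a metric sphere. A tight substantial embedding lying in a sphere with the extremal codimension is then rigid up to similarity (the projective maps preserving the sphere and tightness reduce to M\"obius transformations, which preserve tautness). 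This gives the standard embedding.

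For part (b), let $M^k$ be non-compact and $f$ taut, hence proper. Let $\sigma:{\bf R}^n\rightarrow S^n-\{q\}$ be inverse stereographic projection and set $g=\sigma\circ f$. Then $g(M)\cup\{q\}$ is the image of the one-point compactification $\hat M$ of $M$ under a continuous map $\hat g$. The first step is to show that $\hat g$ is a taut map (at least 0-taut, i.e.\ {\em TPP} as a map into ${\bf R}^{n+1}$). For a cap of $S^n$ not containing $q$, this is the ball condition for $f$. For a cap containing $q$, its preimage is $f^{-1}$ of the complement of an open ball (or a half-space) together with the added point, and connectedness follows from the hypothesis that every non-degenerate $L_p$ has exactly one local minimum and no local maximum on $M$. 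Substantiality of $f$ in ${\bf R}^n$ implies that $\hat g$ is substantial in ${\bf R}^{n+1}$, because $\sigma$ maps affine hyperplanes to spheres through $q$.

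The main obstacle is that $\hat M$ need not a priori be a manifold, and $\hat g$ is not smooth at the added point, so Kuiper's theorem cannot be quoted verbatim. I would first try Kuiper's proof technique directly: the bound $n+1 \leq k(k+3)/2$ comes from a count of top-sets and a local argument at a generic smooth point of $\hat g(\hat M)$, which applies to $\hat g(M)$ near any point of $g(M)$. A {\em TPP} substantial set must have enough smooth points for the second fundamental form argument, giving $n \leq k(k+3)/2-1$. In the equality case, the Little--Pohl rigidity argument, applied at smooth points and extended by continuity, shows that $\hat g(\hat M)$ is a standard ${\bf RP}^k$ in $S^n$ containing $q$. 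Hence $f(M)$ is the stereographic image of a standard embedding with pole on it.
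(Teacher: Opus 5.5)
Your argument for (a) follows the paper's route as far as it goes. Tautness gives the TPP, Kuiper's bound gives $n\le k(k+3)/2$, and Banchoff's stereographic-projection observation shows that in the equality case $f(M)$ lies in a metric sphere. The step after that is false: a substantial tight spherical embedding of extremal codimension is \emph{not} rigid up to similarity. Let $\varphi$ be a non-isometric M\"{o}bius transformation of $S^4$. Then $\varphi$ applied to the spherical Veronese surface is still taut and substantial in ${\bf R}^5$, since caps go to caps and $3$-spheres go to $3$-spheres. Its induced metric, however, is conformal but not homothetic to the round metric on ${\bf RP}^2$, so it is not similar to the standard surface. Your parenthetical reduction to M\"{o}bius maps also has a hole. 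The projective map $T$ supplied by Kuiper and Little--Pohl need not carry the standard sphere onto the sphere containing $f(M)$. The Veronese variety lies on a whole linear system of quadrics, so $T^{-1}$ of the new sphere may be a different oval quadric through it. What the argument actually yields is a Veronese manifold (a projective image of the standard embedding) lying in a metric sphere. That is the sense in which ``standard embedding'' has to be read, and you should stop there.

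For (b) the paper simply cites Carter--West. Your compactification is a reasonable route to the inequality, but Kuiper's argument is local at a \emph{top point}, not at a ``generic smooth point.'' Suppose $x\in M$ is a non-degenerate local minimum of some $L_p$; every $x$ is one, taking $p=f(x)+t\xi$ with $t>0$ small. Then $g(x)$ is a non-degenerate local maximum of a height function on $\hat g(\hat M)$. Moving the height vector along any normal $\eta$ with $A_\eta=0$ keeps it a non-degenerate local maximum, hence a global one by your cap property, and this would put $\hat g(\hat M)$ in a hyperplane. So the first normal space of $g$ at $x$ is the full normal space, and $n+1-k\le k(k+1)/2$.

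The genuine gap is the equality case. Little--Pohl is a global theorem about smooth immersions of compact manifolds. You have not shown that $\hat M$ is a manifold with $\hat g$ smooth at the added point, and this is not automatic: for the taut circular cylinder in ${\bf R}^3$, $\hat M$ is a pinched torus. Nor have you formulated a local version of their rigidity that could be ``extended by continuity.'' Knowing that the second fundamental form of $g$ maps $S^2T_x$ isomorphically onto the normal space at every point is only the starting point. You still need an argument that $g(M)$ is an open subset of a Veronese manifold $V\subset S^n$. Once that is in hand the rest is easy: $q$ lies in the closure of $g(M)$, hence in $V$, and $g(M)$ is open and closed in the connected set $V\setminus\{q\}$. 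As written, though, the equality statement in (b) is asserted rather than proved.
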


We next note that tautness is invariant under the process of taking parallel hypersurfaces
and tubes (see, for example, \cite[pp. 57--58]{CR8}).
Suppose that $f$ is a taut embedding of a compact 
$(n-1)$-dimensional manifold $M$ into $S^n$.  Then
$f(M)$ is orientable, so let $\xi$ be a field of unit
normal vectors to $f(M)$ in $S^n$.  The {\em parallel
hypersurface} to $f$ at signed distance $t$ is given by
the map $f_t:M \rightarrow S^n$,
\begin{equation}
\label{eq:f_t-spherical}
f_t(x) = \cos t \,  f(x) + \sin t \, \xi (x).
\end{equation}
Thus, $f_t(x)$ is obtained by traveling a signed distance $t$
along the normal geodesic to $f(M)$ through $f(x)$.  For
sufficiently small values of $t$, $f_t$ is also an
embedding of $M$, and $f$ is taut if and only if $f_t$ is
taut, since each linear height function $l_p$
has the same critical points on $f_t(M)$ as on $f(M)$.

This type of consideration is also valid for submanifolds
of codimension greater than one in $S^n$.  If
$\phi :V \rightarrow S^n$ is a compact submanifold of
codimension greater than one, we consider the tube $\phi_t$
of radius $t$ around $\phi(V)$ in $S^n$.  For sufficiently
small $t$, the map $\phi_t$ is an embedding of the unit
normal bundle $B^{n-1}$ of $\phi(V)$ into $S^n$.
Furthermore, $\phi$ is a taut  embedding of $V$ if and only if
$\phi_t$ is a taut embedding of $B^{n-1}$.  To see this, one
first computes that every non-degenerate height function
$l_p$ has twice as many critical points on the tube $\phi_t$ as it has
on $\phi$.  Since the sum of the ${\bf Z}_2$-Betti numbers
of $B^{n-1}$ is twice the sum of the ${\bf Z}_2$-Betti numbers
of $V$ (see Pinkall \cite{P4}), tautness is preserved.

\begin{remark}{Tautness is invariant under Lie sphere transformations}
\label{rem:tautness-Lie-invariant}

\noindent
{\rm More generally, tautness is invariant under Lie sphere transformations.  This was proven in a paper
by \'{A}lvarez Paiva \cite{Alv}, and we now briefly discuss his approach in that paper.
(See also \cite[pp. 82-95]{Cec9}, \cite[pp. 224--231]{CR8}, or the paper \cite{Cec-lie-taut}.)
To formulate this result properly, it is necessary to consider submanifold theory in the context of Lie sphere geometry, since Lie sphere transformations act on the space of oriented hyperspheres in $S^n$, and not on the space $S^n$ itself.  
(See Section \ref{sec:submanifolds-lie-geometry} for more detail.)

\'{A}lvarez Paiva \cite{Alv} first extended
the definition of tautness of submanifolds of $S^n$ to 
the concept of Lie-tautness of Legendre submanifolds of the contact manifold 
$\Lambda^{2n-1}$ of projective lines on the Lie quadric $Q^{n+1}$.

He then showed that this definition of Lie-tautness has the property that if $\phi:V \rightarrow S^n$ is an embedding of a compact, connected manifold $V$, then $\phi(V)$ is a taut submanifold of $S^n$ if and only if the Legendre lift $\lambda$
of $\phi$ is Lie-taut. Furthermore, he showed that
this notion of Lie-tautness is invariant under the action of Lie sphere transformations on Legendre submanifolds.  

As a consequence, one gets that
if $\phi:V \rightarrow S^n$ and $\psi:V \rightarrow S^n$ are two embeddings of a compact, connected  
manifold $V$ into $S^n$, 
such that their corresponding Legendre lifts are related by a Lie sphere transformation,
then $\phi$ is a taut embedding if and only if $\psi$ is a taut embedding.  Thus, in that sense, tautness is invariant under Lie sphere transformations.

The key idea in this approach of \'{A}lvarez Paiva \cite{Alv} is to formulate tautness in terms of real-valued
functions on $S^n$ whose level sets form a parabolic pencil 
of unoriented spheres in $S^n$, and then show that this is equivalent to the usual formulation 
of tautness in terms of spherical distance functions, whose level sets in $S^n$ form a pencil of unoriented concentric spheres.}
\end{remark}

\section{Isoparametric Submanifolds}
\label{sec:isoparametric-submanifolds}

Isoparametric submanifolds in ${\bf R}^n$ or in the sphere $S^n \subset {\bf R}^{n+1}$ are taut, and they play an important role in the theory of taut  and Dupin submanifolds.  We begin our treatment of this subject with a discussion of isoparametric hypersurfaces in real space forms, and later we
treat the case of codimension greater than one.

By a real {\em space form}
of dimension $n$, we mean a complete, connected, simply connected manifold $\widetilde{M}^n (c)$ with constant sectional curvature $c$.  If $c=0$, then $\widetilde{M}^n (c)$ is the $n$-dimensional Euclidean space ${\bf R}^n$; if
$c=1$, then $\widetilde{M}^n (c)$ is the unit sphere $S^n \subset {\bf R}^{n+1}$; and if $c= -1$, then $\widetilde{M}^n (c)$ is the $n$-dimensional real hyperbolic space $H^n$ (see, for example, \cite[Vol. I, pp. 204--209]{KN}).  For any value of $c>0$, this subject is basically the same as for the case $c=1$, and for any value of $c<0$, it is very similar to the case $c=-1$.  So we will restrict our attention to the cases $c = 0, 1, -1$.

The original definition of a family of isoparametric hypersurfaces in a real space
form $\widetilde{M}^{n+1}$ was formulated in terms of the level sets of an isoparametric function, as we now describe. 
Let $F:\widetilde{M}^{n+1} \rightarrow {\bf R}$ be a nonconstant smooth function. The classical Beltrami
differential parameters of $F$ are defined by
\begin{equation}
\label{eq:1-beltrami}
\Delta_1 F = |{\rm grad}\  F|^2, \quad \Delta_2 F = \Delta F\ ({\rm Laplacian}\ {\rm of}\  F),
\end{equation}
where grad $F$ denotes the gradient vector field of $F$.

The function $F$ is said to be {\em isoparametric} if there exist smooth functions $\phi_1$ and $\phi_2$ from ${\bf R}$ to 
${\bf R}$ such that
\begin{equation}
\label{eq:1-beltrami-isoparametric}
\Delta_1 F = \phi_1 \circ F, \quad \Delta_2 F = \phi_2 \circ F.
\end{equation}
That is, both of the Beltrami differential parameters are constant on each level set of $F$. This is the origin of the 
term {\em isoparametric}.
The collection of level sets of an isoparametric function is called an {\em isoparametric family} of sets in
$\widetilde{M}^{n+1}$.

An isoparametric family in ${\bf R}^{n+1}$ consists of either
parallel planes, concentric spheres, or coaxial spherical cylinders,
and their focal sets.
This was first shown for $n=2$ by Somigliana \cite{Som} 
(see also B. Segre \cite{Seg1} and Levi-Civita  \cite{Lev}), and for arbitrary $n$ by B. Segre \cite{Seg}.

In the late 1930's, shortly after the publication of the papers of 
Levi-Civita  \cite{Lev} and Segre \cite{Seg},  
Cartan \cite{Car1}--\cite{Car4} began a study of isoparametric families in arbitrary
real space forms. 
In Section 2 of his first paper on the subject,  Cartan \cite{Car1} showed that an isoparametric family, defined as the collection of level sets of an isoparametric function $F$, is 
locally equivalent to a family of parallel hypersurfaces, each of which has constant principal curvatures, together with their focal submanifolds.  

Conversely, let $f_t:M \rightarrow \widetilde{M}^{n+1}$, $-\varepsilon < t < \varepsilon$, 
be a family of parallel hypersurfaces such that $f_0$
has constant principal curvatures.  We see that this is an isoparametric family of hypersurfaces as follows.
By the well known formulas for the principal curvatures of a parallel hypersurface in a real space form (see, for example, \cite[pp. 17--18]{CR8}), each $f_t M$ has constant principal curvatures,
and thus each $f_t M$ has constant mean curvature.  Then the function $F$ defined by
$F(x) = t$, if $x \in f_t M$, is a smooth function defined on an open subset of $\widetilde{M}^{n+1}$ with the property that
${\rm grad}\  F = \xi$ is a unit length vector field such that $\widetilde{\nabla}_{\xi} \xi = 0$.

Furthermore,
since the function $\rho = |{\rm grad}\  F| = 1$ is constant, the constancy of the mean
curvature $h$ on each level hypersurface $f_t M$ implies that the Laplacian of $F$ is also constant
on each level set $f_t M$, and therefore $F$ is an isoparametric function, as defined by  equation 
(\ref{eq:1-beltrami-isoparametric}). Thus, this family of parallel hypersurfaces is a family of isoparametric hypersurfaces in the original sense, i.e., it is the family of level sets of an isoparametric function
(see, for example, \cite[pp. 86--91]{CR8} for more detail on this argument).

In summary, the analytic definition of an isoparametric family of hypersurfaces in terms of level sets of an isoparametric function on a real space form $\widetilde{M}^{n+1}$ 
is locally equivalent to the geometric definition of an isoparametric family as a
family of parallel hypersurfaces to a hypersurface 
with constant principal curvatures.

Therefore, in the case where the ambient space is a real space form $\widetilde{M}^{n+1}$,
we will say that  a connected hypersurface $M^n$ immersed in $\widetilde{M}^{n+1}$ is an {\em isoparametric hypersurface} if it has constant principal curvatures. \\

\begin{remark}{Isoparametric hypersurfaces of comples space forms}
\label{rem-isop-complex-space-forms}

\noindent
{\rm The definition of an isoparametric hypersurface in the paragraph above is not the appropriate formulation if 
$\widetilde{M}$  is only assumed to be a Riemannian manifold.  Rather the appropriate generalization is the following: a hypersurface $M$ in a Riemannian manifold $\widetilde{M}$ is {\em isoparametric} if and only if all nearby parallel hypersurfaces $M_t$ have constant mean curvature.  (See, for example, \cite[pp. 526--531]{CR8}.)
For example, when $\widetilde{M}$ is complex $n$-dimensional projective space ${\bf CP}^n$ or complex
$n$-dimensional hyperbolic space ${\bf CH}^n$, a
(real) isoparametric hypersurface may have nonconstant principal curvatures.  Nevertheless, making use of the Hopf map $\pi: S^{2n+1} \rightarrow {\bf CP}^n$ (see \cite[p. 346]{CR8}), one can show that an isoparametric hypersurface in $ {\bf CP}^n$ lifts to an isoparametric hypersurface in $S^{2n+1}$.  This facilitates the classification of isoparametric hypersurfaces in  ${\bf CP}^n$.
In fact, similar methods can be used to classify isoparametric hypersurfaces in ${\bf CH}^n$, although in that case one
must deal with Lorentzian isoparametric hypersurfaces in the anti-de Sitter space.
For further details about isoparametric hypersurfaces in complex space forms, see 
Q.-M. Wang \cite{Wang-1a}--\cite{Wang3}, G. Thorbergsson \cite{Th7}, L. Xiao \cite{Xiao}, M. Dominguez-Vazquez et al. \cite{DV2}--\cite{DV3},
and the book of Cecil and Ryan \cite[343--531]{CR8}.}
\end{remark}

In Section 3 of his first paper on isoparametric hypersurfaces, Cartan \cite{Car1} derived what he called a
``fundamental formula,'' which is the basis for many of his results on the subject.
The formula involves  the distinct
principal curvatures $\lambda_1,\ldots,\lambda_g$, and their respective multiplicities $m_1,\ldots,m_g$, of an isoparametric
hypersurface $f:M^n \rightarrow \widetilde{M}^{n+1}(c)$ in a space form of constant sectional
curvature $c$.
If the number $g$ of distinct principal curvatures is greater than one, Cartan showed that for each $i$,
$1 \leq i \leq g$, the following equation holds,
\begin{equation}
\label{eq:1-car-id}
\sum_{j\neq i} m_j \frac{c + \lambda_i \lambda_j}{\lambda_i - \lambda_j} = 0.
\end{equation}
Cartan's original formulation of the formula was given in a more complicated form, but he gave the formulation in
equation (\ref{eq:1-car-id}) in his third paper \cite[p. 1484]{Car3}.  Equation (\ref{eq:1-car-id}) is now known as 
``Cartan's formula,''  or ``Cartan's identity."  Another proof of this formula is given in \cite[pp. 91--96]{CR8}.

Using his formula, Cartan was able to classify isoparametric hypersurfaces in ${\bf R}^{n+1}$ and $H^{n+1}$.  In both cases, one can easily prove using Cartan's formula that the number $g$ of distinct principal curvatures must be either 1 or 2.   Then an elementary argument shows that a connected isoparametric hypersurface is either totally umbilic, i.e., $g=1$, or in the case $g=2$,
it is an open subset of a tube of constant radius over a totally geodesic submanifold of codimension greater than one (see, for example, \cite[pp. 96--98]{CR8}).

In the sphere $S^{n+1}$, however, Cartan's formula does not lead to the conclusion that $g \leq 2$, and in fact Cartan
himself produced examples with $g = 1, 2, 3$ and 4 distinct
principal curvatures (see \cite{Car4} for Cartan's examples with $g=4$).  
Moreover, he classified isoparametric hypersurfaces $M^n \subset S^{n+1}$ with
$g \leq 3$, as we will discuss below.  

Despite the depth and beauty of Cartan's work on this topic, 
however, it was largely
ignored until the subject was revived in the 1970's by 
Nomizu \cite{Nom1}--\cite{Nom2} and M\"{u}nzner  \cite{Mu}--\cite{Mu2}.
Among other things, Cartan showed that isoparametric hypersurfaces
come as a parallel family of hypersurfaces, i.e., if
$f:M \rightarrow S^n$ is an isoparametric hypersurface, then so is 
any parallel hypersurface $f_t$.  Of course, $f_t$ is not an
immersion if $\mu = \cot t$ is a principal curvature of $M$.
Then, however, $f_t$ factors through an immersion of the
space of leaves $M/T_{\mu}$  of
the principal foliation $T_{\mu}$.  Thus,
$f_t$ is a submanifold of codimension $m+1$ in $S^n$, where
$m$ is the multiplicity of $\mu$, and $f_t$ is called a {\em focal submanifold} of the isoparametric family
of hypersurfaces.

M\"{u}nzner \cite{Mu}--\cite{Mu2} showed that the principal curvatures of an
isoparametric hypersurface $M \subset S^{n+1}$ must have a very specific form, as follows
(see also \cite[p. 108]{CR8}).

\begin{theorem}
\label{thm:1-prin-curv-isop-hyp}
Let $M \subset S^{n+1}$ be a connected isoparametric hypersurface with $g$ principal curvatures
$\lambda_i = \cot \theta_i$, $0 < \theta_1 < \cdots < \theta_g < \pi$, with respective multiplicities $m_i$. Then
\begin{equation}
\label{eq:1-prin-curv-formula}
\theta_i = \theta_1 + (i-1) \frac{\pi}{g} , \quad 1 \leq i \leq g,
\end{equation}
and the multiplicities satisfy $m_i = m_{i+2}$ (subscripts mod $g$).  For any point $x \in M$, there
are $2g$ focal points of $(M,x)$ along the normal geodesic to $M$ through $x$, and they are evenly
distributed at intervals of length $\pi/g$.
\end{theorem}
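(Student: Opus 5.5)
The plan is to prove this the way M\"{u}nzner did, using the focal points along a single normal geodesic and the fact that each focal submanifold reflects the whole family onto itself.

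Fix $x \in M$ and let $\xi$ be the unit normal at $x$. Parametrize the normal geodesic by $\gamma(t) = \cos t\, x + \sin t\, \xi$. Since $\lambda_i = \cot\theta_i$, the focal points of $(M,x)$ on $\gamma$ are $\gamma(\theta_i)$ and their antipodes $\gamma(\theta_i + \pi)$, for $1 \leq i \leq g$. That gives $2g$ points on the circle $\gamma$, the last sentence of the theorem once spacing is settled. The parallel hypersurfaces $f_t$ are isoparametric and have the same normal geodesics. Their principal curvatures are $\cot(\theta_i - t)$, with the same multiplicities, by the standard parallel-hypersurface formula cited above. So the configuration of focal points on $\gamma$, with multiplicities, is the same whichever point $f_t(x)$ of $\gamma$ we start from. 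It suffices to show the $2g$ focal points are equally spaced, and that the multiplicity attached to a focal point equals the multiplicity two steps further along.

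\textbf{Step 1: reflection through a focal point.} Let $p = \gamma(\theta_1)$ be a focal point, so $f_{\theta_1}$ collapses the leaves of $T_{\lambda_1}$ to a focal submanifold $V$. The curvature surface through $x$ for $\lambda_1$ is an $m_1$-sphere. Its points $y$ all have normal geodesics passing through $p$. As $y$ runs over this sphere, these normal geodesics fill out an $(m_1+1)$-dimensional great sphere $S$ through $p$, orthogonal to $V$ at $p$. For each $y$, the focal points of $(M,y)$ on its geodesic all lie in $S$, because the other principal directions are parallel along the curvature sphere.

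Take $y$ to be the point of the curvature sphere symmetric to $x$ with respect to $p$ in the plane of $\gamma$. Then the normal geodesic through $y$ is $\gamma$ traversed in the reverse direction from $p$. Its focal points, measured from $p$, are therefore the focal points of $(M,x)$ reflected through $p$ on the circle $\gamma$. Since the configuration along $\gamma$ is the same from every point, the focal set on $\gamma$ is invariant under the reflection of the circle fixing $p$. The multiplicities are carried along with the points, since the multiplicities at $y$ equal those at $x$ by constancy of principal curvatures.

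\textbf{Step 2: conclude.} Apply Step 1 at every focal point. The $2g$ focal points form a subset of the circle, invariant under reflection in each of its own points and containing antipodes. Composing the reflections at two adjacent points gives a rotation by twice their gap. The reflection at $p$ sends the two neighbours of $p$ to each other, so all consecutive gaps are equal, each $2\pi/(2g) = \pi/g$. This gives $\theta_i = \theta_1 + (i-1)\pi/g$. The reflection at the focal point $\gamma(\theta_{i+1})$ exchanges $\gamma(\theta_i)$ and $\gamma(\theta_{i+2})$ and preserves multiplicities, so $m_i = m_{i+2}$ with subscripts mod $g$. Antipodal points carry the same multiplicity, which makes the indexing mod $g$ consistent.

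\textbf{Main obstacle.} The hard step is Step 1. One must justify that the focal points of $(M,y)$ along its normal geodesic, with their multiplicities, are the mirror image of those of $(M,x)$, rather than merely occurring at the same distances from the respective base points. I would handle this with the explicit description of the curvature sphere as a small sphere $S \cap f(M)$ centered at $p$, and the fact that the shape operator at $y$ for the normal pointing toward $p$ is the one at $x$ conjugated by the isometry of $S$ swapping $x$ and $y$. Alternatively, one can use M\"{u}nzner's result that the focal submanifolds are minimal and that $M$ is a tube over each of them, so the tube structure over $V$ is symmetric under $\eta \mapsto -\eta$ in the normal space of $V$.
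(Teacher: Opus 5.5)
Your Step 2 is fine. Once the focal set on $\gamma$, with multiplicities, is invariant under the reflection of the circle fixing each focal point, equal spacing $\pi/g$ and $m_i=m_{i+2}$ follow as you say. The gap is in Step 1, and your ``main obstacle'' paragraph aims at the wrong part of it.

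That the focal points of $(M,y)$ are the mirror image through $p$ of those of $(M,x)$ is automatic. Since $y=\gamma(2\theta_1)$ has normal $\xi(y)=-\gamma'(2\theta_1)$, its focal points are $\gamma(2\theta_1-\theta_i)$ and their antipodes. What is not automatic is your next sentence, that this set equals $\{\gamma(\theta_i),\gamma(\theta_i+\pi)\}$. You proved ``the same from every point'' only for the points $f_t(x)$ viewed on the parallel hypersurfaces $f_t(M)$. Although $y=f_{2\theta_1}(x)$, you would need $M$ to coincide with $f_{2\theta_1}(M)$ near $y$. That means $M$ must contain the whole fiber sphere $\{\cos\theta_1\,p+\sin\theta_1\,\eta\}$ of the tube over $V$ at $p$, on which $f_{2\theta_1}$ acts antipodally.

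For compact $M$ this holds: each leaf of $T_{\lambda_1}$ is closed in $M$, hence a full $m_1$-sphere. In that case your argument can be completed. But the theorem is local. For a connected isoparametric hypersurface, the curvature surface through $x$ is only an open piece of an $m_1$-sphere, so $y$ need not lie in $M$ at all. Your assertion that this curvature surface ``is an $m_1$-sphere'' already assumes completeness. You also cannot repair this by extending $M$ to a compact hypersurface or by quoting minimality of the focal submanifolds. M\"{u}nzner derives both from this very theorem: the Cartan--M\"{u}nzner polynomial is built from the equal spacing, and minimality is just $\mathrm{tr}\,A_\eta=0$, a consequence of the symmetry you are trying to prove. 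So that route is circular.

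The missing idea is to pass to the shape operator of the focal submanifold $V=f_{\theta_1}(M)$. This is how the proof the paper points to (M\"{u}nzner; Cecil--Ryan, p.~108) obtains your reflection symmetry locally.
\begin{itemize}
\item For $z$ on the curvature surface through $x$, let $\eta(z)$ be the velocity at $p$ of the normal geodesic of $M$ from $z$. These $\eta(z)$ fill an open subset $U$ of the unit normal sphere $S^{m_1}$ of $V$ at $p$.
\item For $\eta\in U$, the focal points of $V$ along the geodesic from $p$ in direction $\eta$ are those of $(M,z)$. So $A_\eta$ has eigenvalues $\cot(\theta_j-\theta_1)$, $2\le j\le g$, with multiplicities $m_j$.
\item The coefficients of the characteristic polynomial of $A_\eta$ are polynomials in $\eta$, hence real-analytic on the connected sphere $S^{m_1}$ (here $m_1\ge 1$). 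Being constant on $U$, they are constant on all of $S^{m_1}$. Therefore $A_{-\eta}$ has the same spectrum as $A_\eta$.
\item Since $A_{-\eta}=-A_\eta$, the numbers $\theta_j-\theta_1$, $j\ge 2$, with multiplicities $m_j$, are invariant under $s\mapsto\pi-s$. This is exactly the invariance of the focal set on $\gamma$ under reflection through $\gamma(\theta_1)$.
\end{itemize}
Repeating this with $f_{\theta_i}$ for each $i$ gives Step 1 at every focal point, and your Step 2 then completes the proof.
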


M\"{u}nzner also showed that a
parallel family of isoparametric hypersurfaces in $S^n$ always
consists of the level sets in $S^n$ of a homogeneous polynomial $F$
defined on ${\bf R}^{n+1}$ satisfying certain differential equations on the
differential parameters $\Delta_1 F$ and $\Delta_2 F$. 
(This had been shown earlier by Cartan \cite{Car2} in the case where all of the
principal curvatures are assumed to have the same multiplicity.)

This implies that any local piece
of an isoparametric hypersurface can be extended to a unique
compact, connected isoparametric hypersurface.  M\"{u}nzner  showed further that
regardless of the number of distinct principal curvatures
of $M$, there are only two distinct focal submanifolds in a
parallel family of isoparametric hypersurfaces, and these are
minimal submanifolds of the sphere.  (The minimality of the
focal submanifolds was also established independently by Nomizu
\cite{Nom1}--\cite{Nom2}.) 

M\"{u}nzner  
used this information
and a difficult topological argument to prove the
following important result.  The key fact 
here is that any isoparametric hypersurface divides the sphere
into two ball bundles over the two focal submanifolds (see also \cite[pp. 131--137]{CR8}).

\begin{theorem}
\label{thm:munzner-g}
(M\"{u}nzner) The number $g$ of distinct principal curvatures of an isoparametric
hypersurface in $S^n$ must be $1,2,3,4$ or $6$.
\end{theorem}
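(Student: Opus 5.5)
The plan is to follow M\"{u}nzner's topological route: convert the geometric information in Theorem~\ref{thm:1-prin-curv-isop-hyp} into a statement about the $\mathbf{Z}_2$-cohomology of a compact isoparametric hypersurface $M^{n-1}\subset S^n$, and then use an algebraic-topology rigidity theorem to restrict $g$. First I would pass from local to global. M\"{u}nzner's result that the family consists of the level sets of a homogeneous polynomial $F$ of degree $g$ on $\mathbf{R}^{n+1}$ lets me assume that $M$ is compact and connected. It also shows that the parallel family has exactly two focal submanifolds $M_+$ and $M_-$, of codimensions $m_1+1$ and $m_2+1$ in $S^n$; here I use $m_i=m_{i+2}$, so only the multiplicities $m_1$ and $m_2$ occur.

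Next I would establish the key decomposition: $S^n = B_+\cup_M B_-$, where $B_\pm$ is a closed disk bundle over $M_\pm$ with fibre dimension $m_{1}+1$, respectively $m_2+1$, and $M=\partial B_\pm$ is the corresponding sphere bundle. To get it, I flow along normal geodesics. By Theorem~\ref{thm:1-prin-curv-isop-hyp}, the focal points on each normal geodesic are spaced $\pi/g$ apart, so every point of $S^n$ not on $M_\pm$ lies on a unique parallel hypersurface $f_t(M)$ with $t$ in the open interval between the two focal values, and $f_t$ collapses onto $M_\pm$ at the endpoints.

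From the decomposition, the Mayer--Vietoris sequence of $S^n=B_+\cup B_-$, together with the Thom--Gysin sequences of the sphere bundles $M\to M_\pm$ and Poincar\'e--Lefschetz duality, should give the following.
\begin{enumerate}
\item The sum of the $\mathbf{Z}_2$-Betti numbers of $M$ is $2g$.
\item $H^*(M_\pm;\mathbf{Z}_2)$ is nonzero only in degrees that are multiples-plus-sums of $m_1$ and $m_2$, arranged periodically.
\end{enumerate}
To get the number $2g$ I would also use that the normal geodesic circle through $x$ meets $M$ in $2g$ points. Equivalently, a generic height function restricted to $M$ is perfect with $2g$ critical points, which is consistent with tautness. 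The outcome is that $n-1=g(m_1+m_2)/2$, and that $H^*(M_+;\mathbf{Z}_2)$ looks like a truncated polynomial-type ring with generators in degrees $m_1$ and $m_2$.

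The decisive and hardest step is to show that this cohomological structure forces $g\in\{1,2,3,4,6\}$. I would first try to show that when the multiplicities are equal, the ring $H^*(M_+;\mathbf{Z}_2)$ behaves like the cohomology of a projective space truncated at height $g/2$ or thereabouts. More generally, cup products by a generator must be isomorphisms in the middle range, which I would prove by duality in the two-sided ball-bundle decomposition. Then I would apply Steenrod squares: $\mathrm{Sq}^m$ of a degree-$m$ generator equals its square. Adem relations, in the spirit of Adams' Hopf invariant one theorem, then rule out truncation heights beyond those of the classical projective planes. This is what should leave only $g\le 6$ with $g\neq 5$. For odd $g$ there is a simpler route: when $g$ is odd, all the $m_i$ are equal, and that case reduces to the projective-plane situation, giving $g=1$ or $3$. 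For even $g$ I would reduce by a parity and degree argument to the projective-plane case with the generators in degrees $m_1$ and $m_1+m_2$, which gives $g/2\in\{1,2,3\}$, i.e.\ $g=2,4,6$. Setting up the multiplicative structure correctly, rather than just the additive Betti numbers, is where I expect the main obstacle to lie.
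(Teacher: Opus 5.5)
The paper gives no proof of this theorem. It cites M\"unzner and singles out the decomposition of $S^n$ into two disk bundles over the focal submanifolds as the key fact. Your first half follows that route and is sound. From $S^n=B_+\cup_M B_-$, Mayer--Vietoris, the Gysin sequences and duality give $\dim H^*(M;\mathbf{Z}_2)=2\dim H^*(M_\pm;\mathbf{Z}_2)$, vanishing mod~$2$ Euler classes, and the periodic additive structure of $H^*(M_\pm;\mathbf{Z}_2)$ with period $m_1+m_2$, consistent with $n-1=g(m_1+m_2)/2$.

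The genuine gap is the decisive step. Steenrod squares, the Adem relations and Adams' theorem restrict the \emph{degree} of the generator of a truncated polynomial algebra $\mathbf{Z}_2[x]/(x^{h+1})$ realized as a cohomology ring: for $h\ge 2$ the degree must be $1,2,4$ or $8$. They give no bound at all on the \emph{height} $h$, since $\mathbf{RP}^h$, $\mathbf{CP}^h$, $\mathbf{HP}^h$ realize every $h$ with their full Steenrod action. Because $g$ is essentially the number of classes in $H^*(M_+;\mathbf{Z}_2)$, i.e.\ a height, your mechanism cannot exclude $g=5$ or $g\ge 7$.
\begin{itemize}
\item For odd $g=2j+1$ there is no reduction to the projective-plane situation. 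The focal submanifold has $\mathbf{Z}_2$ in degrees $0,m,2m,\dots,2jm$, the pattern of a $2j$-dimensional projective space, not a plane.
\item For $g=5$, $m=1$, everything you extract is consistent with $H^*(M_+;\mathbf{Z}_2)\cong H^*(\mathbf{RP}^4;\mathbf{Z}_2)$.
\item In the even case, for $g=8$, $m_1=m_2=1$, the data are consistent with $H^*(\mathbf{RP}^7;\mathbf{Z}_2)$. More generally, a class of degree $m_1+m_2\in\{2,4\}$ may have arbitrarily many nonzero powers as far as the Steenrod algebra is concerned.
\end{itemize}
In none of these cases is an Adem relation violated.

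What is missing is information the abstract ring of one focal manifold does not see: how $M_+$ sits in $S^n$ and how it is coupled to $M_-$.
\begin{itemize}
\item The normal bundle $\nu_+$ has rank $m_1+1$. So the dual Stiefel--Whitney classes $\bar w(M_+)=w(\nu_+)$, which Wu's formula computes from the Steenrod action, must vanish above degree $m_1+1$.
\item In the middle range there is an isomorphism $H^{q}(M_+)\cong H^{q+m_1+1}(B_+,M)\cong H^{q+m_1+1}(S^n,B_-)\cong H^{q+m_1}(M_-)$, from the Thom isomorphism, excision and the coboundary. It intertwines the Steenrod actions on $M_+$ and $M_-$ only after twisting by $w(\nu_+)$.
\end{itemize}
This kind of input does kill the $\mathbf{RP}^4$ case: there $\bar w=(1+x)^{-5}=1+x+x^2+x^3$, and $\bar w_3\neq 0$ is impossible for a rank-$2$ normal bundle. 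But the Stiefel--Whitney test alone is passed by an $\mathbf{FP}^6$-type ring, where $\bar w=(1+x)^{-7}=1+x$ modulo $x^7$. So excluding $g=7$ requires a still finer analysis of the two-sided structure, which is where M\"unzner's ``difficult topological argument'' lies; it is not an application of Adem relations to one focal manifold. As written, your proposal establishes the additive topology but not $g\in\{1,2,3,4,6\}$.
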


Cartan classified isoparametric hypersurfaces with 
$g \leq 3$ principal curvatures, as follows.  Of course, if $g=1$,
then $M$ is totally umbilic, and it must be a great or small
sphere.  If $g=2$, then $M$ must be a standard product of two
spheres
\begin{displaymath}
S^k(r) \times S^{n-k-1}(s) \subset S^n, \quad r^2+s^2=1.
\end{displaymath}

In the case $g=3$, Cartan \cite{Car2}
showed that all the principal curvatures must have the same multiplicity
$m=1,2,4$ or 8, and the isoparametric hypersurface must be an open subset of a tube of
constant radius 
over a standard embedding of a projective
plane ${\bf FP}^2$ into $S^{3m+1}$,
where ${\bf F}$ is the division algebra
${\bf R}$, ${\bf C}$, ${\bf H}$ (quaternions),
${\bf O}$ (Cayley numbers), for $m=1,2,4,8,$ respectively.  Thus, up to
congruence, there is only one such family for each value of $m$.
This was a remarkable result with a difficult proof. These isoparametric
hypersurfaces with three principal curvatures are often referred to as
{\em Cartan hypersurfaces}.  (For more detail, see \cite[pp. 151--155]{CR8} or \cite{CR-cartans-examples}.)

All of Cartan's examples are homogeneous, and he asked in {\cite{Car3} if every isoparametric hypersurface
is homogeneous.  However,
that is not true as we will discuss below.

\begin{remark}{Homogeneous isoparametric hypersurfaces in spheres}
\label{rem-Takagi-Takahashi}

\noindent
{\rm In 1972, Takagi and Takahashi \cite{TT} gave a complete classification of all homogeneous
isoparametric hypersurfaces
in $S^{n+1}$, based on the work of Hsiang and Lawson \cite{HL}.  Takagi and Takahashi showed that each homogeneous
isoparametric hypersurface in $S^{n+1}$ is a principal orbit of the isotropy representation
of a Riemannian symmetric space of 
rank 2, and they gave a complete list of examples \cite[p. 480]{TT}.  This list contains examples with $g=6$
principal curvatures as well as those with $g=1,2,3,4$ principal curvatures. }
\end{remark} 

As noted above, Cartan classified isoparametric hypersurfaces in spheres with $g \leq 3$ principal curvatures.
In the case of $g=4$ principal curvatures, the classification of
is now complete due to the work of several 
mathematicians, as we now describe.  First,
M\"{u}nzner  \cite{Mu}--\cite{Mu2}  proved that
the principal curvatures can have at most two
distinct multiplicities $m_1,m_2$. 
Then Ferus, Karcher and M\"{u}nzner
\cite{FKM} 
used representations
of Clifford algebras to construct
for any positive integer
$m_1$ an infinite series of isoparametric hypersurfaces
with four principal curvatures having respective multiplicities
$(m_1,m_2)$, where $m_2$ is nondecreasing and
unbounded in each series. 
These examples are now known as isoparametric hypersurfaces of {\it OT-FKM-type}, because
the construction of Ferus, Karcher and M\"{u}nzner was a generalization of an earlier construction due to Ozeki and Takeuchi \cite{OT}--\cite{OT2}.  Many of the hypersurfaces of  OT-FKM-type are inhomogeneous, 
which was first shown by Ozeki and Takeuchi.

Stolz \cite{Stolz}  next proved that the multiplicities $(m_1,m_2)$
of the principal curvatures of an isoparametric
hypersurface with $g=4$ principal curvatures
must be the same as those of an isoparametric hypersurface of
OT-FKM-type, or else $(2,2)$ or $(4,5)$, which are the multiplicities for certain homogeneous examples
that are not of OT-FKM-type  (see the classification 
of homogeneous isoparametric hypersurfaces of Takagi and Takahashi \cite {TT} mentioned in 
Remark \ref{rem-Takagi-Takahashi}).  The results of Stolz generalized the previous work on the multiplicities by
several mathematicians, including M\"{u}nzner \cite{Mu}--\cite{Mu2},
Abresch \cite{Ab}, Grove and Halperin \cite{GH}, Tang \cite{Tang}--\cite{Tang2} and Fang
\cite{Fang3}--\cite{Fang2}.

Cecil, Chi and Jensen \cite{CCJ1} then
showed that if the multiplicities of an isoparametric hypersurface with four principal curvatures satisfy the condition
$m_2 \geq 2 m_1 - 1$, then
the hypersurface is of OT-FKM-type.  (A different proof of this result, using isoparametric 
triple systems, was given later by Immervoll \cite{Im}.)

Taken together with known results of Takagi \cite{Takagi} for $m_1 = 1$,
and Ozeki and Takeuchi \cite{OT}--\cite{OT2} for $m_1 = 2$, this result of Cecil, Chi and Jensen handled all
possible pairs of multiplicities except for four cases, the OT-FKM pairs
$(3,4), (6,9)$ and $(7,8)$,and  the homogeneous pair $(4,5)$.

In a series of recent papers, Chi \cite{Chi}--\cite{Chi4} completed the classification of isoparametric hypersurfaces with four principal curvatures.
Specifically, Chi showed that in the cases of multiplicities
$(3,4)$, $(6,9)$ and $(7,8)$, the isoparametric hypersurface must be of OT-FKM-type, and in the case $(4,5)$, it must be homogeneous.

In summary, the final conclusion is that an isoparametric hypersurface with $g=4$ principal curvatures must either be of
OT-FKM-type, or else a homogeneous isoparametric hypersurface with multiplicities $(2,2)$ or $(4,5)$.

In the case of an isoparametric hypersurface with $g=6$ principal curvatures, M\"{u}nzner \cite{Mu}--\cite{Mu2} showed
that all of the principal curvatures must have the same multiplicity $m$, and
Abresch \cite{Ab} showed that $m$ must equal 1 or 2.
By the classification of homogeneous isoparametric hypersurfaces due to Takagi and Takahashi \cite{TT},
there is up to congruence only one homogeneous family in each case, $m=1$ or $m=2$.  

These homogeneous examples have been shown to be
the only isoparametric hypersurfaces in the case $g=6$ by Dorfmeister and Neher \cite{DN5}
in the case of multiplicity $m=1$, and by Miyaoka \cite{Mi11}--\cite{Mi12} in the case $m=2$
(see also the papers of Siffert \cite{Siffert1}--\cite{Siffert2}, and Miyaoka \cite{Mi25}).
This completes the classification of isoparametric hypersurfaces in spheres.

In a notable series of papers, Dorfmeister and Neher \cite{DN1}--\cite{DN6}
gave an algebraic approach to the study of isoparametric
hypersurfaces and isoparametric triple systems. 
For general surveys on isoparametric hypersurfaces in spheres,
see the papers of Thorbergsson \cite{Th6}, Cecil \cite{Cec10}, and Chi \cite{Chi-survey}.
For a survey of various contexts in Riemannian geometry in which isoparametric hypersurfaces have appeared,
see Cecil-Ryan \cite[pp. 182--184]{CR8}.

\begin{remark}{Isoparametric hypersurfaces are taut}
\label{rem:isoparametic-hyp-taut}

\noindent
{\rm All isoparametric hypersurfaces in $S^n$ and their focal submanifolds are taut.  This was
established by Cecil and Ryan \cite{CRBer}
using the results of  M\"{u}nzner \cite{Mu}--\cite{Mu2}.
In particular, any normal geodesic to an isoparametric hypersurface
is also normal to each parallel hypersurface and to the focal
submanifolds.  Using  M\"{u}nzner's results, one can show that 
every non-focal point $p \in S^n$ lies on exactly $2g$ normal
geodesics from points in $M$, i.e., the height function
$l_p$ has exactly $2g$ critical points on $M$.  Since
M\"{u}nzner showed that the sum of the ${\bf Z}_2$-Betti
of an isoparametric hypersurface with $g$ principal curvatures
is $2g$, the hypersurface $M$ is taut.  A similar argument shows
that the focal submanifolds are taut.}
\end{remark}

\begin{remark}{Isoparametric submanifolds of higher codimension}
\label{rem:isoparametric-higher-codimension}

\noindent
{\rm In the early 1980's, a theory of isoparametric submanifolds of
codimension greater than one was introduced independently
by several mathematicians, including
Carter and West \cite{CW4}, \cite{CW6}--\cite{CW8}, \cite{West}--\cite{West1}, 
Terng \cite{Te1}--\cite{Te5},  Hsiang, Palais and Terng \cite{HPT}, Str\"{u}bing \cite{Str}, and Harle \cite{Har}.

By definition,
a connected, complete submanifold $V$ in a real space form $\widetilde{M}^n$ is said to be {\em isoparametric} if it has 
flat normal bundle
and if for any parallel section of the unit normal bundle $\eta:V \rightarrow B^{n-1}$, the principal curvatures of
the shape operator $A_\eta$ are constant.

In the decade after this
definition was formulated, intense
research by several mathematicians produced a remarkable
theory (see Terng \cite{Te5} and Palais-Terng
\cite{PalT} for more detail). 
Among other things,
Terng \cite{Te1} showed that a compact isoparametric submanifold
in Euclidean space must be taut and lie in a standard hypersphere in the Euclidean space.
Palais and Terng \cite{PalT1} showed that the only homogeneous
isoparametric submanifolds in Euclidean spaces are the
principal orbits of the isotropy representations of
symmetric spaces, which had been studied extensively by Bott 
and Samelson \cite{BS}, who showed that the orbits are taut.
The orbits of the isotropy 
representations of symmetric spaces (also known as standard embeddings of $R$-spaces or generalized flag manifolds),
were studied independently
by Takeuchi and Kobayashi \cite{TK}  
who also showed that
they were taut.  (See Thorbergsson \cite{Th8} for a survey on the classical symmetric $R$-spaces from
the point of view of projective and polar geometry.)

The class of $R$-spaces
contains many special subclasses of homogeneous submanifolds which
were shown to be taut by various special arguments.  (See, for
example, the papers of Kobayashi \cite{Kob}, Tai \cite{Tai}, Wilson
\cite{Wil}, Kuiper \cite{Ku3}, Ferus \cite{Fer} and K\"{u}hnel
\cite{Kuh}.) 
Later, Hsiang, Palais and Terng \cite{HPT} obtained many
facts about the geometry and topology of isoparametric submanifolds,
including the fact that they and their focal submanifolds are taut.

This line of research culminated with the work of
Thorbergsson \cite{Th2}, who used the theory of Tits buildings to show that all irreducible 
isoparametric submanifolds which are substantially embedded in $S^n$ with codimension greater than one are homogeneous.
Thus, they are
principal orbits of isotropy representations
of symmetric spaces, as was the case for homogeneous isoparametric hypersurfaces in the sphere.
Subsequent to Thorbergsson's paper, Olmos \cite{Olm}, and Heintze and Liu \cite{HL} published 
alternate proofs of Thorbergsson's result.  

In a further generalization,
Heintze, Olmos and Thorbergsson \cite{HOT} defined a submanifold $\phi:V \rightarrow {\bf R}^n$  (or $S^n$) 
to have {\em constant principal curvatures} if for any smooth
curve $\gamma$ on $V$ and any parallel normal vector field
$\xi(t)$ along $\gamma$, the shape operator $A_{\xi(t)}$ has
constant eigenvalues along $\gamma$.  If the normal bundle
$N(M)$ is flat, then having constant principal curvatures is
equivalent to being isoparametric.  They then
showed that a submanifold with constant principal curvatures is
either isoparametric or a focal submanifold of an isoparametric
submanifold.
The survey paper of Thorbergsson \cite{Th6} has a good account of all the topics mentioned in this remark.}
\end{remark}

\begin{remark}{Isoparametric hypersurfaces in semi-Riemannian spaces}
\label{rem:isop-hyp-pseudo-Riemann}

\noindent
{\rm Nomizu \cite{Nom3} initiated the study of isoparametric hypersurfaces in semi-Riemannian space forms
by proving a generalization of Cartan's formula for spacelike hypersurfaces in a Lorentzian space form
$\widetilde{M}^n_1(c)$ of constant sectional curvature $c$.  As a consequence of this formula, Nomizu showed that
a spacelike isoparametric hypersurface in $\widetilde{M}^n_1(c)$ can have at most two distinct principal curvatures
if $c \geq 0$. Later Li and Xie \cite{LX} proved that this conclusion also holds for spacelike isoparametric
hypersurfaces in $\widetilde{M}^n_1(c)$ for $c<0$.
Magid \cite{Mag} studied isoparametric hypersurfaces in Lorentz space whose shape operator is not diagonalizable,
and Hahn \cite{Hahn} did an extensive study of isoparametric hypersurfaces in semi-Riemannian space forms of arbitrary signatures.}
\end{remark}

\section{Dupin Submanifolds}
\label{sec:dupin-submanifolds}

In this section, we introduce the notion of Dupin submanifolds,
beginning with hypersurfaces.  Let $f:M \rightarrow {\bf R}^n$
be an immersed hypersurface.  Let $\xi$ be a locally defined
field of unit normals to $f(M)$.
A {\em curvature surface} of $M$ is a smooth submanifold $S$
such that for each point $x \in S$, the tangent space
$T_xS$ is equal to a principal space of the shape operator
$A$ of $M$ at $x$.  This generalizes the classical notion of a line of curvature on a surface in ${\bf R}^3$.  

Pinkall \cite{P2} formulated the notion of a Dupin hypersurface as follows.

\begin{definition}
\label{def:dupin}
{\rm A hypersurface $M$ in $ {\bf R}^n$ is said to be {\it Dupin} if:\\

\noindent
(a) along each curvature surface, the corresponding principal
curvature is constant.}\\

\noindent
{\rm A Dupin hypersurface $M$ is called {\it proper Dupin} if, in addition 
to Condition (a), the following condition is satisfied:\\

\noindent
(b) the number $g$ of distinct principal curvatures is constant on $M$.}
\end{definition}

Several remarks about these definitions are in order.  The proofs can
be found in \cite[pp. 132--151]{CR7} or \cite[pp. 24--32]{CR8}.
First, if the dimension of a curvature surface $S$
is greater than one, then the corresponding principal curvature
is automatically constant on $S$.  This is proven using the
Codazzi equation.  Second, 
Condition (b) is equivalent to requiring that each continuous
principal curvature has constant multiplicity on $M$.

For any hypersurface $M$ in ${\bf R}^n$, there is an open dense subset of $M$ on
which the multiplicities of the continuous principal curvatures
of $M$ are locally constant. (See, for example, Singley \cite{Sin}.)
Suppose now that a continuous principal curvature $\mu$ has
constant multiplicity $m$ on an open subset $U \subset M$.
Then $\mu$ and its principal distribution $T_{\mu}$ are smooth on
$U$.  Furthermore, again using the Codazzi equation, one can show
that $T_{\mu}$ is integrable, and thus it is called the
{\em principal foliation} corresponding to $\mu$.  The leaves
of this principal foliation are the curvature surfaces 
corresponding to $\mu$ on $U$.  The principal curvature
$\mu$ is constant along each of its curvature surfaces in $U$
if and only if these curvature surfaces are open subsets
of $m$-dimensional Euclidean spheres or planes.  

The {\em focal map} $f_{\mu}$ corresponding to $\mu$ is the map
which maps a point $x\in M$ at which $\mu \neq 0$ to the focal point $f_{\mu}(x)$ corresponding to
$\mu$, i.e.,
\begin{equation}
\label{eq:focal-map-Euclidean}
f_{\mu}(x) = f(x) + \frac{1}{\mu(x)} \xi(x).
\end{equation}
The domain of $f_\mu$ is the subset of $M$ on which $\mu \neq 0$.

A direct calculation shows that $\mu$ is constant
along each of its curvature surfaces in the set $U$ defined above  if and only if
the focal map $f_{\mu}$ factors through an immersion of the
$(n-1-m)$-dimensional space of leaves $U/T_{\mu}$ into ${\bf R}^n$.

In summary, on an open subset $U$ on which the number of distinct
principal curvatures is constant, Condition (a) is equivalent
to requiring that each curvature surface in each principal
foliation be an open subset of a Euclidean sphere or plane
with dimension equal to the multiplicity of the corresponding
principal curvature. On the set $U$, Condition (a) is also equivalent
to the condition that each focal map is a submanifold of
codimension greater than one in ${\bf R}^n$.

Like tautness, both the Dupin and proper Dupin conditions are
invariant under M\"{o}bius transformations and under stereographic
projection from $S^n$ to ${\bf R}^n$ or from $H^n$ to ${\bf R}^n$
(see \cite[pp. 17--18]{Cec9}).  These Dupin conditions are
also invariant under parallel transformations,  and,  more generally, under
the group of Lie sphere transformations.  
A proof of these
claims is best formulated in the setting of Lie sphere geometry
(see Section \ref{sec:submanifolds-lie-geometry},  or \cite{P2}, \cite{Cec9},  for more detail.)

An important class of compact proper Dupin hypersurfaces in
${\bf R}^n$ is obtained by taking the images under stereographic
projection of isoparametric hypersurfaces in $S^n$.  Of course,
for these examples, the number $g$ of distinct principal curvatures
must be $1,2,3,4$ or 6.  In fact, Thorbergsson \cite{Th1} 
showed that this restriction on $g$ holds for any compact
proper Dupin hypersurface embedded in ${\bf R}^n$, as follows.

First, Thorbergsson \cite{Th1} showed that if $M$ is a complete, connected proper
Dupin hypersurface embedded in a real space form $\widetilde{M}^n$, then $M$ is tautly
embedded by proving the following theorem.

\begin{theorem}
\label{thm:proper-Dupin-implies-taut} 
Let $M \subset \widetilde{M}^n$ be a complete, connected proper Dupin hypersurface embedded in a real space form 
$\widetilde{M}^n$.  Then $M$ is taut with respect to ${\bf Z}_2$-coefficients.
\end{theorem}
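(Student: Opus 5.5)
We will prove tautness through the injectivity criterion \eqref{eq:def-taut}, or equivalently through perfection of the distance functions. The plan is to show that every non-degenerate distance function $L_p$ on $M$ is perfect for ${\bf Z}_2$-coefficients, and to do this by building Bott--Samelson type cycles from the curvature surfaces. Because tautness is invariant under stereographic projection and M\"{o}bius transformations, we can take $\widetilde{M}^n = {\bf R}^n$, with $S^n$ and $H^n$ handled the same way. Completeness together with properness of $L_p$ makes the sublevel sets $M_r(L_p)$ compact, so the Morse inequalities in the form \eqref{eq:perfect-fn} apply. Perfection of a Morse function over ${\bf Z}_2$ holds when, for each critical point $x$ of index $k$, there is a $k$-cycle $z_x$ that is supported in $M_r(L_p)$ for $r = L_p(x)$, passes through $x$, and has a top cell which is a small neighborhood of $x$ in the unstable (descending) directions. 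Under these conditions every critical point contributes a new homology class, the long exact sequences of the pairs $(M_r, M_{r-\varepsilon})$ split, and $\mu_k(L_p,r) = \beta_k(\phi,r,{\bf Z}_2)$.

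To construct $z_x$, we use the fact from the paper that $L_p$ is critical at $x$ exactly when $p$ lies on the normal line at $f(x)$. The index $k$ equals the number of focal points of $(M,x)$, counted with multiplicity, on the segment from $f(x)$ to $p$. Each focal point corresponds to a principal curvature $\mu_i$ of multiplicity $m_i$. On a proper Dupin hypersurface, the curvature surface $S_i$ through $x$ for $\mu_i$ is an $m_i$-sphere, and it lies in a sphere centered at the focal point $f_{\mu_i}(x)$ (these facts are stated earlier in the paper). Moreover, because the focal point lies between $f(x)$ and $p$, the restriction of $L_p$ to $S_i$ attains its maximum at $x$. The natural procedure is then inductive. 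Take the focal point $q_1$ closest to $p$ and its curvature sphere $S_1$ through $x$. Every point $y \in S_1$ has the same focal point $q_1$, and the normal line at $y$ passes through $q_1$. Now repeat the construction at each $y \in S_1$ using the remaining principal curvatures whose focal points lie beyond $q_1$ toward $f(y)$. The result is an iterated sphere bundle: a Bott--Samelson manifold $B_x$ of dimension $k = \sum m_i$, together with a map $B_x \to M$ under which the pullback of $L_p$ attains its maximum only at the base point, and this maximum is non-degenerate there. The image of the ${\bf Z}_2$-fundamental class of $B_x$ is the desired cycle $z_x$. Proper Dupin, meaning the multiplicities are constant, is what makes this iteration well defined globally: every curvature surface is a complete round sphere, and the focal configuration varies continuously along it.

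The main obstacle is to make the inductive step rigorous. At a point $y \in S_1$ other than $x$, the function $L_p$ is no longer critical, so the induction has to be carried out with respect to the point $q_1$, whose normal line passes through $y$, rather than $p$. The cycle is then built as a family over $S_1$ of cycles for the functions $L_{q_1}$-type. We then need three estimates: that the cycle remains in $\{L_p \le L_p(x)\}$, that its value equals $L_p(x)$ only at $x$, and that near $x$ it fills out the full negative space of the Hessian. For the first two, the plan is an elementary distance comparison. Points on the sphere about $q_1$ through $f(y)$ are at distance from $p$ at most $|p-q_1| + |q_1 - f(y)|$, with equality only along the ray through $x$. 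For the third, we compute the differential of $B_x \to M$ at the base point, which maps onto the direct sum of the principal spaces for the focal points between $f(x)$ and $p$. A limiting argument using \v{C}ech homology (\eqref{eq:def-taut}) then extends the conclusion from Morse $L_p$ to all closed balls.
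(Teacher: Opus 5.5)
Your proposal is correct and is essentially the argument the paper attributes to Thorbergsson: you build ${\bf Z}_2$-cycles from the principal foliations, as iterated bundles of curvature spheres. You take the focal point nearest $p$ first, so that the successive pieces lie in nested closed balls internally tangent at $f(x)$. From this you conclude that every critical point of every non-degenerate $L_p$ is of linking type, without using any prior knowledge of the homology of $M$. The same nested-ball induction also gives that the base point is the only point of $B_x$ mapping to $x$, which is exactly what the uniqueness of the maximum in the Bott--Samelson criterion requires.
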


We now briefly discuss Thorbergsson's method of proof.
Let $p \in \widetilde{M}$ and let $L_p$ be the distance function $L_p (x) = d(p,x)^2$, where $d(p,x)$ is the distance from $p$ to $x$ in $\widetilde{M}$.  By Sard's Theorem,
the restriction of $L_p$ to $M$ is a Morse function for almost all $p \in \widetilde{M}$.  
To prove that $M$ is tautly
embedded, one must show that every non-degenerate (Morse) function of the form $L_p$ has the minimum number of critical points required by the Morse inequalities on $M$.

Thorbergsson's method of proving tautness in this theorem is different than the
method that was used to prove that isoparametric hypersurfaces in spheres are taut,
as described in Remark \ref{rem:isoparametic-hyp-taut}.
In proving Theorem \ref{thm:proper-Dupin-implies-taut} above, Thorbergsson does not
assume knowledge of the homology of the proper Dupin hypersurface $M$. 
Rather, he uses the principal foliations to
construct concrete ${\bf Z}_2$-cycles in $M$, which enable him to show
that every critical point of every non-degenerate distance
function is of linking type (see, Morse-Cairns \cite[p. 258]{MC}), and thus $M$ is taut. 
See Thorbergsson's paper \cite{Th1} for a complete proof.

\begin{remark}{Focal submanifolds of Dupin hypersurfaces may not be taut}
\label{focal-submanifolds-not-taut}

\noindent
{\rm In contrast to the case of an isoparametric hypersurface (see Remark \ref{rem:isoparametic-hyp-taut}), the focal
submanifolds of a compact, connected proper Dupin hypersurface embedded in $S^n$ or ${\bf R}^n$ are
not necessarily taut.
For example, if $M \subset {\bf R}^3$ is 
for a compact ring cyclide of Dupin \cite[pp. 148--159]{Cec9}  that is not a torus of revolution,  then one focal
submanifold of $M$ is an ellipse, which is not taut in ${\bf R}^3$, although it is tight in ${\bf R}^3$  (see also
Buyske  \cite{Bu}).}
\end{remark} 

In order to be able to apply the results of M\"{u}nzner \cite{Mu}--\cite{Mu2},
we now focus on the case where 
$M$ is a compact, connected proper Dupin hypersurface in the sphere $S^n$.
Thorbergsson \cite{Th1} used Theorem \ref{thm:proper-Dupin-implies-taut} to derive the following
important theorem. Here $M_+$ and $M_-$ are the first focal
submanifolds on either side of $M$ in $S^n$.  (See also \cite[pp. 140--143]{CR8} for a detailed proof.)

\begin{theorem}
\label{thm:dupin-ball-bundle} 
Let $M \subset S^n$ be a compact, connected proper Dupin hypersurface. Then
$M$ divides $S^n$ into to a union of two ball bundles over the two focal
submanifolds $M_+$ and $M_-$.  
\end{theorem}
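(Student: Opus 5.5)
We will follow Thorbergsson's strategy: use tautness (Theorem \ref{thm:proper-Dupin-implies-taut}) to control the normal geodesics of $M$, and then show that the two closed complementary regions of $M$ are tubular neighborhoods of the first focal sets $M_+$ and $M_-$. Fix a field of unit normals $\xi$ on $M$; this is possible because a compact embedded hypersurface of $S^n$ is orientable. Let $\lambda_1 > \cdots > \lambda_g$ be the distinct principal curvatures, with constant multiplicities $m_i$, and write $\lambda_i = \cot\theta_i$ with $0<\theta_1<\cdots<\theta_g<\pi$. These are continuous, hence smooth, functions on $M$. The first focal point of $(M,x)$ in the direction $\xi$ is $f_{\theta_1}(x)=\cos\theta_1(x) f(x)+\sin\theta_1(x)\xi(x)$. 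The Dupin condition shows that $\theta_1$ is constant along the leaves of $T_{\lambda_1}$, so $f_{\theta_1}$ factors through an immersion of the leaf space $M/T_{\lambda_1}$ onto $M_+$. The corresponding first focal map in the $-\xi$ direction gives $M_-$.

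The plan proceeds in three steps. \emph{Step 1:} show that the leaves of $T_{\lambda_1}$ are compact spheres $S^{m_1}$ and that $M/T_{\lambda_1}$ is a compact manifold, so that $M_+$ is a compact embedded submanifold of codimension $m_1+1$ and $M \to M_+$ is a sphere bundle. Each leaf is an open subset of an $m_1$-sphere, because the Dupin condition makes the curvature surfaces spherical. Completeness of the leaves in the compact $M$ then forces each leaf to be the whole sphere, and the leaf is exactly $M\cap S$ for the small sphere $S$ of radius $\theta_1$ centered at the focal point. Tautness gives the embeddedness of $M_+$: if two distinct leaves had the same focal point $p$, then the distance function from a point near $p$ would have too many critical points of index $0$, contradicting the STPP. \emph{Step 2:} show that the closed normal segments $\{\cos t f(x)+\sin t\,\xi(x): 0\le t\le \theta_1(x)\}$ fill out one closed complementary region $D_+$ of $M$ and are pairwise disjoint except at their focal endpoints. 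Disjointness means that the map $(x,t)\mapsto \cos t f(x)+\sin t\,\xi(x)$ is injective on $0\le t<\theta_1(x)$. \emph{Step 3:} conclude that $D_+$ is the mapping cylinder of the sphere bundle $M\to M_+$, i.e.\ a closed $(m_1+1)$-ball bundle over $M_+$. The same argument applied to $-\xi$ gives the ball bundle $D_-$ over $M_-$, and $S^n = D_+\cup D_-$ with $D_+\cap D_-=M$.

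The main obstacle is the injectivity in Step 2. Suppose a point $p$ lies on two such segments, at distances $t_1\le\theta_1(x_1)$ and $t_2\le \theta_1(x_2)$ from $x_1\neq x_2$. Choose $p$ to be the first such coincidence and perturb it to a non-focal point. Then the spherical distance function $d_p$ has two critical points on $M$ of index $0$, since no focal points lie strictly before the first focal point. This contradicts the STPP, which is implied by tautness. The delicate part is handling the boundary case where the coincidence happens exactly at focal points, and checking that $d_p$ really attains local minima there rather than degenerate critical points. I would treat this with the \v{C}ech-homology formulation \eqref{eq:def-taut} for $i=0$: the closed ball of radius $t$ about $p$ meets $M$ in a set that must be connected. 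Surjectivity of the segment map onto $D_+$ will follow from taking, for any point $q$ of $D_+$, a nearest point of $M$ to $q$. The normal geodesic from that point reaches $q$ before the first focal point, by the index count.
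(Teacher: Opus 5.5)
Your plan follows the route the paper points to (Thorbergsson's derivation from Theorem~\ref{thm:proper-Dupin-implies-taut}, worked out in Cecil--Ryan). Tautness, via the STPP and the \v{C}ech $H_0$ condition, makes the first focal map injective on the leaf space of $T_{\lambda_1}$ and makes $(x,t)\mapsto\cos t\, f(x)+\sin t\,\xi(x)$ injective for $0\le t<\theta_1(x)$, so each side of $M$ is the mapping cylinder of a sphere bundle over $M_\pm$. The boundary case you flag is settled by the Dupin condition. Every point of a leaf $L$ of $T_{\lambda_1}$ is a critical point of $d_p$ with $p=f_{\theta_1}(L)$, and its Hessian $\cot\theta_1\, I - A$ is positive semidefinite with kernel exactly $T_{\lambda_1}=TL$; hence $L$ is a Bott-nondegenerate, and so isolated, local minimum set, and both your perturbation and \v{C}ech arguments go through. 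This fact together with tautness, not completeness alone, is also what justifies your claim that $M\cap S$ is exactly the leaf.
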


Thorbergsson could then invoke M\"{u}nzner's 
\cite{Mu}--\cite{Mu2} results to obtain the following theorem. 

\begin{theorem}
\label{thm:dupin-restriction-g}
Let $M$ be a compact proper Dupin hypersurface in $S^n$.
Then\\
$(1)$ the number $g$ of distinct principal curvatures of $M$ is
$1,2,3,4$ or $6$.\\
$(2)$ The sum of the ${\bf Z}_2$-Betti numbers of $M$ is $2g$.
\end{theorem}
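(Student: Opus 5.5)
The plan is to reduce the statement to Münzner's topological argument. Theorem \ref{thm:dupin-ball-bundle} puts a compact proper Dupin hypersurface in exactly the topological situation Münzner used for isoparametric hypersurfaces. So I will first extract the precise ball-bundle data and then feed it into Münzner's cohomological computation.

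Let $m_1,\ldots,m_g$ be the multiplicities of the distinct principal curvatures, which are constant since $M$ is proper Dupin. The focal submanifolds $M_+$ and $M_-$ are the spaces of leaves of the principal foliations of the first and last principal curvatures on either side of $M$. By the Dupin condition these leaves are round spheres of dimensions $m_1$ and $m_g$, so $M_\pm$ are compact manifolds of codimensions $m_1+1$ and $m_g+1$ in $S^n$. By Theorem \ref{thm:dupin-ball-bundle}, $S^n = B_+ \cup B_-$ with $B_+\cap B_- = M$, where $B_\pm$ are closed disk bundles over $M_\pm$ with fibers of dimensions $m_1+1$ and $m_g+1$. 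Then $M$ is the common boundary sphere bundle of both.

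Step one is the Betti number count. Using $\mathbf{Z}_2$-coefficients, I apply the Mayer--Vietoris sequence to $S^n=B_+\cup B_-$, the Gysin or Thom isomorphisms for the sphere bundles $M\to M_\pm$, and Poincaré--Alexander duality in $S^n$. Exactly as in Münzner, this gives $\beta(M)=2\beta(M_+)=2\beta(M_-)$, where $\beta$ is the total $\mathbf{Z}_2$-Betti number. It also shows that the cohomology of $M_\pm$ is concentrated in degrees determined by $m_1$ and $m_g$.

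Step two is to show that $\beta(M)=2g$, which proves (2). Thorbergsson's Theorem \ref{thm:proper-Dupin-implies-taut} shows $M$ is taut, so a nondegenerate spherical distance function $L_p$ (equivalently the height function $l_p$) has exactly $\beta(M)$ critical points. I will choose $p$ close to a point $x_0\in M$, off the focal set. The critical points of $l_p$ are the points $x$ such that $p$ lies on the normal great circle through $x$. Near $x_0$ these can be counted by the focal points along the normal great circle through $x_0$, of which there are $2g$ with the multiplicities $m_i$ repeated, since the principal curvatures are constant along curvature surfaces. The count should come out to $2g$. As a cross-check, Morse theory gives the indices $0, m_1, m_1+m_2, \ldots$, and matching these against the degrees from step one pins down the cohomology ring.

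Step three is Münzner's algebraic restriction, which proves (1). With the cohomology ring of $M$ and $M_\pm$ determined, and the multiplicity relation $m_i=m_{i+2}$ forced by the duality between the two sides, I invoke Münzner's argument. He shows that the cohomology of the pair $(B_+,M)$, built from $g$ equally spaced generators, can exist only if $g\in\{1,2,3,4,6\}$. This uses Adams' Hopf-invariant-one theorem, or Steenrod squares, on the relevant cohomology algebra, exactly as for $S^n$ split into two ball bundles. Since that argument uses only the topological decomposition and the dimensional data, and not isoparametricity, it applies verbatim.

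The main obstacle is step two: justifying that the number of critical points, and hence $\beta(M)$, is exactly $2g$ without knowing the homology of $M$ in advance. If the local counting argument fails, I would instead build Thorbergsson's explicit $\mathbf{Z}_2$-cycles from the principal foliations and use them to determine the ring structure directly.
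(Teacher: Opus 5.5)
\textbf{There is a genuine gap.} Your chain of reductions is the one the paper uses: tautness, then Theorem~\ref{thm:dupin-ball-bundle}, then M\"{u}nzner. What is missing is the link that makes M\"{u}nzner's topology say anything about the number of principal curvatures.

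For ball bundles with fibers of dimensions $m_1+1$ and $m_g+1$, M\"{u}nzner's argument gives $\beta(M)=2g'$ with $g'=2\dim M/(m_1+m_g)$. It is this topological integer $g'$ that lies in $\{1,2,3,4,6\}$. So both (1) and (2) require proving $g'=g$, i.e.\ $m_1+\cdots+m_g=g(m_1+m_g)/2$.

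Your step two does not establish this. The critical points of $l_p$ are all points $y\in M$ whose normal great circle passes through $p$, which is a global set. The $2g$ focal points of $(M,x_0)$ on the normal circle through $x_0$ only record how the index of the one critical point near $x_0$ changes as $p$ moves along that circle. In the isoparametric case the count $2g$ comes from each normal circle meeting $M$ orthogonally in exactly $2g$ points. Nothing of that kind is available for a general proper Dupin hypersurface, where the count $2g$ is a consequence of the theorem rather than an input.

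Your cross-check does not rescue this. Tautness does give $\beta_k(M)\neq 0$ exactly for $k\in\{m_1+\cdots+m_j\}\cup\{m_g+\cdots+m_{g-j+1}\}$. But for multiplicities $(a,\,b-a,\,a,\,a,\,b)$ with $a<b$, this set coincides with the degree set of M\"{u}nzner's pattern for $g'=4$, while $g=5$. So that bookkeeping alone cannot pin down $g$. Also, $m_i=m_{i+2}$ is not forced by the duality between $B_+$ and $B_-$: the ball bundles only see $m_1$ and $m_g$.

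The missing ingredient is a global computation of $H_*(M;{\bf Z}_2)$ from the Dupin structure. This means Thorbergsson's explicit ${\bf Z}_2$-cycles built from the principal foliations (the same cycles that show every critical point of a nondegenerate distance function is of linking type), compared with the ball-bundle computation. As the paper notes, this comparison is also the source of $m_k=m_{k+2}$. Your fallback names this route but does not carry it out. Until it is done, step three restricts $\beta(M)/2$, not the number $g$ of principal curvatures, and (2) is not proved.
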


Thorbergsson \cite[p. 980]{Th6} noted that his construction of the cycles in \cite{Th1}
can be used to show that the multiplicities of the principal curvatures of a compact, 
connected proper Dupin hypersurface
satisfy the relation
 \begin{equation}
 \label{eq:subscripts-dupin}
 m_k = m_{k+2} \quad {\rm (subscripts\  mod}\  g),
 \end{equation}
 as in M\"{u}nzner's Theorem \ref{thm:1-prin-curv-isop-hyp} above for isoparametric hypersurfaces.  The proof here
 is different than the proof in the isoparametric case.  In the Dupin case,
it is accomplished by calculating the homology of $M$ by using the cycles constructed by Thorbergsson 
in proving Theorem \ref{thm:proper-Dupin-implies-taut} and comparing
that with the calculation of the homology based on the fact that $M$ divides $S^n$ into two ball bundles,
as in Theorem \ref{thm:dupin-ball-bundle}. 

Furthermore, the following theorem regarding the multiplicities of a compact, connected proper Dupin hypersurface embedded in $S^{n+1}$ has also been proven.
\begin{theorem}
\label{thm:mult-compact-Dupin}
For each value of $g = 1,2,3,4,6$, the possible multiplicities of 
the principal curvatures of a
compact, connected proper Dupin hypersurface embedded in $S^n$ are the same as the possible multiplicities
for an isoparametric hypersurface in $S^n$ with the same number of principal curvatures.
\end{theorem}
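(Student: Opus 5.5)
The plan is to reduce everything to the topological structure supplied by Theorem \ref{thm:dupin-ball-bundle}. Let $M \subset S^n$ be compact, connected and proper Dupin with $g$ principal curvatures and multiplicities $m_1,\ldots,m_g$. By Thorbergsson's result, $M$ splits $S^n$ into two ball bundles over the focal submanifolds $M_+$ and $M_-$. The fibers of these bundles have dimensions $m_1+1$ and $m_2+1$, the codimensions of the first focal submanifolds on either side. By (\ref{eq:subscripts-dupin}), $m_k=m_{k+2}$, so at most two multiplicities $m_1,m_2$ occur, just as in the isoparametric case. M\"{u}nzner's restrictions on $(g,m_1,m_2)$ were derived purely from this ball bundle decomposition, by cohomology ring computations. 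Hence every conclusion of his that uses only the topology applies verbatim to $M$. The work therefore splits by the value of $g$, which is $1,2,3,4$ or $6$ by Theorem \ref{thm:dupin-restriction-g}.

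For $g=1,2$ there is nothing to prove, since every multiplicity, respectively every pair, already occurs for a round sphere or a product of spheres. For $g=3$, I would check that the multiplicities are equal by (\ref{eq:subscripts-dupin}). Then I would follow M\"{u}nzner's argument: the cohomology of the focal submanifold is a truncated polynomial ring on one generator of degree $m+1$, forcing $m\in\{1,2,4,8\}$ by Adem's theorem on Hopf invariant one. This is exactly Cartan's list. For $g=6$, (\ref{eq:subscripts-dupin}) gives at most two multiplicities, and M\"{u}nzner's topological argument shows they are equal. I would then verify that Abresch's proof that $m\in\{1,2\}$ uses only the ball bundle structure and the resulting cohomology, not constancy of principal curvatures. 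If it also uses parallel hypersurfaces, I would replace them by the tubes that exist in the Dupin case via Lie sphere parallel transformations. I would also cite Grove and Halperin, who work in the purely topological category.

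The main case is $g=4$. Here the plan is to invoke Stolz's theorem. His proof uses only the fact that $S^n$ is decomposed as a union of two disk bundles glued along $M$ with fibers of dimensions $m_1+1$ and $m_2+1$. His homotopy-theoretic argument, using Steenrod operations and $K$-theory of the resulting Thom spaces, then shows that $(m_1,m_2)$ is an OT-FKM pair, $(2,2)$, or $(4,5)$. I would carefully confirm that every input Stolz uses, including the Betti number sum $2g=8$ from Theorem \ref{thm:dupin-restriction-g}, holds in the Dupin setting. I expect this verification to be the main obstacle. Finally, every pair on that list is realized by an isoparametric hypersurface, namely the OT-FKM examples of Ferus, Karcher and M\"{u}nzner, or the homogeneous ones of Takagi and Takahashi. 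So the Dupin multiplicities coincide exactly with the isoparametric ones, and the converse inclusion is immediate because isoparametric hypersurfaces are themselves compact proper Dupin.
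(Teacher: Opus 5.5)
Your handling of $g=1,2$, and your reliance on Stolz for $g=4$ and on Grove and Halperin for $g=6$, match what the paper does. Stolz's theorem is already stated and proved for compact proper Dupin hypersurfaces, so no separate check of his inputs is needed.

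The genuine difference is $g=3$. The paper invokes Miyaoka's theorem (Theorem~\ref{thm:miyaoka-1}) that such a hypersurface is Lie equivalent to an isoparametric one, and Lie sphere transformations preserve the multiplicities of curvature spheres. You instead argue topologically from Thorbergsson's ball bundle decomposition. That works, but you need to correct two slips.

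\begin{itemize}
\item \emph{Degree of the generator.} With all multiplicities equal to $m$, one has $n=3m+1$ and $\dim M_\pm=2m$. Let $D_\pm$ be the two ball bundles. The Thom isomorphisms for $D_\pm$ and the exact sequences of the pairs $(S^n,D_\mp)$ give $\tilde H^{i}(M_+;{\bf Z}_2)\cong H^{i-m}(M_-;{\bf Z}_2)$ for $i\le 3m-1$, and symmetrically with $M_+$ and $M_-$ exchanged. So $H^*(M_+;{\bf Z}_2)$ is ${\bf Z}_2$ exactly in degrees $0,m,2m$. Poincar\'e duality then makes it ${\bf Z}_2[x]/(x^3)$ with $\deg x=m$, not $m+1$. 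With degree $m+1$ you would conclude $m\in\{1,3,7\}$.
\item \emph{Attribution.} Adem's relations only show that $m$ is a power of $2$. The restriction to $\{1,2,4,8\}$ is Adams's Hopf invariant one theorem.
\end{itemize}

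As for what each route buys: Miyaoka's theorem is a much stronger, and much harder, Lie-geometric classification, with the multiplicities as a by-product. Your argument yields only the multiplicities, but it uses nothing beyond the decomposition. That is the same purely topological input that Stolz and Grove--Halperin use for $g=4,6$.

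For $g=6$, do not count on M\"unzner's or Abresch's isoparametric arguments transferring verbatim. That transfer is exactly the content of Grove and Halperin's paper. Your fallback of replacing parallel hypersurfaces by Lie parallel transforms would not help, since those do not have constant principal curvatures. So this case rests on the Grove--Halperin citation, exactly as in the paper.
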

This was shown in the case $g=4$ by Stolz \cite{Stolz} and for
$g=6$ by Grove and Halperin \cite{GH}. Both of these papers involve sophisticated topological arguments.

Note that for
$g=1$ and $g=2$, there are no restrictions on the multiplicities. In the case $g=3$,
Miyaoka \cite{Mi1} proved that a compact, connected proper Dupin hypersurface $M$ is 
Lie equivalent to an isoparametric hypersurface, and thus it has the same multiplicities as an isoparametric hypersurface.
That is, all the multiplicities are equal to a certain integer $m$, and $m$ is $1,2,4$ or 8.

\subsubsection*{Pinkall's local constructions of proper Dupin hypersurfaces}

In contrast with Thorbergsson's Theorem \ref{thm:dupin-restriction-g},
Pinkall \cite{P2} (see also \cite[pp. 34--35]{CR8} or \cite[pp. 125--141]{Cec9}) 
showed how to locally construct a proper Dupin hypersurface in Euclidean space with an
arbitrary number of distinct principal curvatures having any
prescribed multiplicities. This is done using the following basic constructions of Pinkall.

\begin{definition}[Pinkall's local constructions]
\label{def:pinkalls-constructions}
Start with a Dupin hypersurface
$W^{n-1}$ in ${\bf R}^n$ and then consider ${\bf R}^n$
as the linear subspace ${\bf R}^n \times \{ 0 \}$
in ${\bf R}^{n+1}$. 
The following 
constructions yield a Dupin hypersurface $M^n$ in
${\bf R}^{n+1}$.
\begin{enumerate}
\item[$(1)$] Let $M^n$ be the cylinder $W^{n-1} \times {\bf R}$ in
${\bf R}^{n+1}$.
\item[$(2)$] Let $M^n$ be the hypersurface in ${\bf R}^{n+1}$
obtained by rotating
$W^{n-1}$ around an axis
${\bf R}^{n-1} \subset {\bf R}^n$.
\item[$(3)$] Project $W^{n-1}$ stereographically onto a hypersurface
$V^{n-1} \subset S^n \subset {\bf R}^{n+1}$.  Let
$M^n$ be the cone over $V^{n-1}$ in ${\bf R}^{n+1}$.
\item[$(4)$] Let $M^n$ be a tube of constant radius in ${\bf R}^{n+1}$ around $W^{n-1}$.
\end{enumerate}
\end{definition}

In general, these constructions introduce a new principal curvature 
of multiplicity one on $M^n$ which is constant along its lines 
of curvature.  The other principal curvatures are determined by the 
principal curvatures of $W^{n-1}$, and the Dupin property is preserved
for these principal curvatures.  Thus, 
if $W^{n-1}$ is a proper Dupin hypersurface in ${\bf R}^n$
with $g$ distinct principal curvatures, then in general, $M^n$ is a 
proper Dupin hypersurface in ${\bf R}^{n+1}$ with $g+1$ distinct principal curvatures.

However, there are cases where the number
of distinct principal curvatures of $M^n$ does not equal $g+1$ at some points, as we will discuss after the proof of
Theorem \ref{thm:4.1.1} below.   These constructions can be
generalized to produce a new principal curvature of multiplicity
$m$ by considering ${\bf R}^n$ as a subset of 
${\bf R}^n \times {\bf R}^m$ rather than ${\bf R}^n \times {\bf R}$.

Although Pinkall gave these four constructions, he showed \cite[p. 438]{P2} 
that the cone construction $(3)$ is redundant in Lie sphere geometry, since it
is Lie equivalent
to the tube construction (see also \cite[p. 144]{Cec9}). Thus, we often work with just the cylinder, 
surface of revolution, and tube constructions, as in \cite{Cec9}.

Using these constructions, Pinkall \cite{P2} showed how
to locally produce a proper Dupin hypersurface in Euclidean space with an
arbitrary number of distinct principal curvatures, each with any given multiplicity as follows.

\begin{theorem}
\label{thm:4.1.1}
Given positive integers $m_1, \ldots , m_g$, with 
$m_1 + \cdots + m_g = n-1$, there exists a proper
Dupin hypersurface $M^{n-1}$ in ${\bf R}^n$ with $g$
distinct principal curvatures having respective
multiplicities $m_1, \ldots , m_g$.
\end{theorem}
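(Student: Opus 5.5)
The plan is induction on $g$, building the hypersurface one principal curvature at a time with the generalized cylinder construction from Definition \ref{def:pinkalls-constructions}, in which ${\bf R}^n$ is regarded as ${\bf R}^n \times \{0\} \subset {\bf R}^n \times {\bf R}^m$. The base case $g=1$ is immediate: a round sphere $S^{m_1} \subset {\bf R}^{m_1+1}$ is totally umbilic. It therefore has one principal curvature of multiplicity $m_1$, which is constant, so it is proper Dupin.

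For the inductive step, suppose $W^{k}\subset{\bf R}^{k+1}$ is proper Dupin with distinct principal curvatures $\mu_1,\ldots,\mu_{g-1}$ of multiplicities $m_1,\ldots,m_{g-1}$. Any local piece will do, since the statement is local. I would take the tube $M$ of small constant radius $\varepsilon$ around $W \subset {\bf R}^{k+1}\times\{0\}\subset{\bf R}^{k+1+m_g}$. This is the $m_g$-dimensional version of construction (4): $W$ has codimension $m_g+1$ in the larger space, so its normal spheres have dimension $m_g$.

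The computation I would do is the standard shape operator of a tube. At a point $x+\varepsilon\eta$, with $\eta$ a unit normal to $W$ in ${\bf R}^{k+1+m_g}$, the tube has the following principal curvatures.
\begin{enumerate}
\item[(i)] The value $-1/\varepsilon$, up to a sign convention, with multiplicity $m_g$. Its curvature surfaces are the normal $m_g$-spheres, and it is constant, so the Dupin condition holds for it automatically.
\item[(ii)] The values $\lambda/(1-\varepsilon\lambda)$, where $\lambda$ ranges over the eigenvalues of $A_\eta$ on $W$.
\end{enumerate}
Since $W$ lies in the hyperplane ${\bf R}^{k+1}$, write $\eta=\cos\theta\,\xi+\sin\theta\,\zeta$, where $\xi$ is the unit normal of $W$ in ${\bf R}^{k+1}$ and $\zeta\perp{\bf R}^{k+1}$. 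Then $A_\eta=\cos\theta\,A_\xi$, so the curvatures in (ii) are $\cos\theta\,\mu_i/(1-\varepsilon\cos\theta\,\mu_i)$, with multiplicities $m_i$. The curvature surfaces for these are the sets where $\theta$ is fixed and $x$ moves along a curvature surface of $\mu_i$. On such a set $\mu_i$ and $\theta$ are constant, so the Dupin property transfers to $M$.

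The main obstacle is condition (b), the constancy of the number of distinct principal curvatures, which can fail on $M$. Two kinds of coincidence can occur:
\begin{enumerate}
\item[(a)] where $\cos\theta=0$, all the $\mu_i$-curvatures collapse to $0$;
\item[(b)] a value in (ii) can equal the value in (i).
\end{enumerate}
The fix is to shrink the domain. Restrict to the open subset where $\cos\theta$ is close to $1$, so that $\theta$ is near $0$. Shrink $W$ so that the $\mu_i$ are bounded and pairwise separated, and choose $\varepsilon$ small. Then the map $t\mapsto t/(1-\varepsilon t)$ is injective on the relevant range, so the values in (ii) remain distinct, and they stay bounded while $-1/\varepsilon$ is very large in absolute value. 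On this open subset $M$ is proper Dupin with $g$ principal curvatures of multiplicities $m_1,\ldots,m_g$ and dimension $k+m_g$. This completes the induction, since the dimensions add up to $n-1=\sum m_i$.
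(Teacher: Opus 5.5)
Your proof is correct, but it takes a different route from the paper's. The paper uses only the (generalized) cylinder construction. It starts from an open piece $U$ of a torus of revolution on which neither principal curvature vanishes and takes $U\times{\bf R}^m$. Before adding each further factor, it applies an inversion in a sphere so that no principal curvature of the new piece vanishes, and then takes another cylinder. In that approach the shape operator of $W\times{\bf R}^m$ is immediate, but the new principal curvature is identically $0$. So every step needs a M\"{o}bius transformation, together with the M\"{o}bius invariance of the proper Dupin condition, to keep the new curvature distinct from the old ones.

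Your tube construction reverses this trade-off. You need the tube shape-operator formula in codimension $m_g+1$ and the fact that the normal bundle of $W\subset{\bf R}^{k+1}\subset{\bf R}^{k+1+m_g}$ is flat; this flatness gives $A_\eta=\cos\theta\,A_\xi$ and makes $\eta$ parallel when its direction is held fixed. In return, no inversions are needed, and zeros of the $\mu_i$ do no harm.

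Your coincidence (b) can never occur. The equation $\lambda/(1-\varepsilon\lambda)=-1/\varepsilon$ reduces to $0=-1$, because the linear fractional map $t\mapsto t/(1-\varepsilon t)$ sends only $t=\infty$ to $-1/\varepsilon$. So the only degeneration is the collapse on the set $\cos\theta=0$; for $m_g=1$ this is the set $W\times\{\pm\epsilon\}$ of Remark~\ref{rem:pinkall-constructions-compactness}. Boundedness of the $\mu_i$ and small $\varepsilon$ are really needed only to keep $1-\varepsilon\cos\theta\,\mu_i\neq 0$, so that the tube is immersed.

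One small correction: when $m_g\geq 2$, the type (ii) curvature surfaces are obtained by fixing the whole parallel normal direction, meaning both $\theta$ and the unit vector $\zeta\in{\bf R}^{m_g}$. Fixing $\theta$ alone gives a set of dimension $m_i+m_g-1$, which is too large.
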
 
\begin{proof}
The proof is by an inductive construction
which will be clear once the first few cases are handled.
First we construct a proper Dupin hypersurface $M^3$ in 
${\bf R}^4$ with three principal curvatures of multiplicity
one. Begin with an open subset $U$ of a torus of
revolution in ${\bf R}^3$ on which neither principal
curvature vanishes.  Take $M^3$ to be the cylinder
$U \times {\bf R}$ in ${\bf R}^3 \times {\bf R} =
{\bf R}^4$.  Then $M^3$ has three distinct principal
curvatures at each point, one of which is identically
zero.  These are clearly constant along their
corresponding lines of curvature.
Next, to construct a proper Dupin hypersurface in ${\bf R}^5$
with three distinct principal curvatures having
respective multiplicities 1, 1, 2, take a cylinder
$U \times {\bf R}^2$ in ${\bf R}^3 \times {\bf R}^2$. Finally,
to get a proper Dupin hypersurface $V^4$ in
${\bf R}^5$ with four principal curvatures of multiplicity 1, first 
invert the hypersurface $M^3$ constructed above in a 3-sphere
in ${\bf R}^4$ chosen so that the image of $M^3$
contains an open set $W^3$ on which no principal
curvature vanishes.  The hypersurface $W^3$ is proper Dupin, since the proper Dupin property is preserved by
M\"{o}bius transformations.
Now take $V^4$ to be the
cylinder $W^3 \times {\bf R}$. 
\end{proof}

\begin{remark}{Pinkall's Constructions and Compactness}
\label{rem:pinkall-constructions-compactness}

\noindent
{\rm While Pinkall's constructions locally produce proper Dupin hypersurfaces, 
there are often problems in trying to produce compact proper Dupin hypersurfaces by using these
constructions.  We now discuss some of the problems involved
with the the cylinder, 
surface of revolution, and tube constructions individually (see \cite[pp. 127--141]{Cec9} for more details).

For the cylinder construction $(1)$,
the new principal curvature of $M^n$ is identically zero, while 
the other principal curvatures of $M^n$ are equal to those
of $W^{n-1}$.  Thus, if one of the principal curvatures $\mu$
of $W^{n-1}$ is zero at some points but not identically
zero, then the number of distinct principal curvatures 
is not constant on $M^n$, and so $M^n$ is Dupin but not proper Dupin.  

For the surface of revolution construction $(2)$, if the focal point corresponding to a principal curvature
$\mu$ at a point $x$ of the profile submanifold $W^{n-1}$ lies on the axis of revolution ${\bf R}^{n-1}$, then
the principal curvature of $M^n$ at $x$ 
determined by $\mu$ is equal to the new principal curvature of $M^n$ resulting from
the surface of revolution construction.
Thus, if the focal point of $W^{n-1}$ corresponding to $\mu$ lies
on the axis of revolution for some, but not all points of  $W^{n-1}$, then $M^n$ is not proper Dupin.

If $W^n$ is a tube in ${\bf R}^{n+1}$ of radius
$\epsilon$ over $W^{n-1}$ obtained by the tube construction $(4)$, then there are exactly two
distinct principal curvatures at the points in the set
$W^{n-1} \times \{ \pm \epsilon \}$ in $M^n$, regardless
of the number of distinct principal curvatures on $W^{n-1}$.
Thus, $M^n$ is not a proper Dupin hypersurface, unless the original hypersurface $W^{n-1}$ is totally umbilic, i.e., it
has only one distinct principal curvature at each point.

Another problem with these constructions is that they may not yield
an immersed hypersurface in ${\bf R}^{n+1}$. In the tube construction, if the
radius of the tube is the reciprocal of one of the 
principal curvatures of $W^{n-1}$ at some point, then the constructed object has a singularity.  For the
surface of revolution construction, a singularity occurs
if the profile submanifold $W^{n-1}$ intersects the axis of revolution. 
Some of the issues mentioned in this Remark can be resolved by working
in the context of Lie sphere geometry (see \cite[pp. 125--148]{Cec9}).}
\end{remark}

\begin{remark}{Dupin submanifolds of higher codimension}
\label{rem:dupin-submanifolds-of-higher-codimension}

\noindent
{\rm As in the the isoparametric case (see Remark \ref{rem:isoparametric-higher-codimension}),
the Dupin property can also be formulated for submanifolds of 
codimension greater than one.  To do this, we first need the definition
(due to Reckziegel \cite{Reck})
of a curvature surface in that case.  Suppose that
$\phi:V \rightarrow {\bf R}^n$ is a submanifold of
codimension greater than one, and let $B^{n-1}$ denote the
unit normal bundle of $\phi(V)$.  Then a {\em curvature
surface} is a connected submanifold $S \subset V$ for which
there is a parallel section $\eta : S \rightarrow B^{n-1}$
such that for each $x \in S$, the tangent space $T_xS$ is
equal to some smooth eigenspace of the shape operator
$A_{\eta}$. 
We then define $\phi(V)$ to be {\em Dupin}
if along each curvature surface, the corresponding principal
curvature of $A_{\eta}$ is constant.  The Dupin
submanifold $\phi(V)$ is {\em proper Dupin} if the number
of distinct principal curvatures of $A_{\xi}$ is constant as $\xi$ varies over
the unit normal bundle $B^{n-1}$.  From the definitions,
it is clear that an isoparametric submanifold is always
Dupin, but it may not be proper Dupin.  (See Terng
\cite[pp. 464--469]{Te5} for more discussion of this.)  Dupin submanifolds
of codimension greater than one
are handled quite naturally in the setting
of Lie sphere geometry (see Section \ref{sec:submanifolds-lie-geometry}).}
\end{remark}

Pinkall \cite[p. 439]{P2} showed that every extrinsically 
symmetric submanifold of a real space form is Dupin.  Takeuchi
\cite{Tak} then determined which of these are proper Dupin. \\

\begin{remark}{Proper Dupin hypersurfaces are algebraic}
\label{rem:dupin-algebraic}

\noindent
{\rm Another important result is that like isoparametric hypersurfaces,
proper Dupin hypersurfaces are algebraic in a certain sense.  
This was formulated by Cecil, Chi and Jensen \cite{CCJ3} as follows.}

\begin{theorem}
\label{thm:Dupin-algebraic}
Every connected proper Dupin hypersurface 
$f:M \rightarrow{\bf R}^n$ embedded in ${\bf R}^n$ is contained in a connected component of an 
irreducible algebraic subset of ${\bf R}^n$ of dimension $n-1$. 
\end{theorem}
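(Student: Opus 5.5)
The plan is to show that a proper Dupin hypersurface is real analytic, and then that the germ at a point determines an algebraic set. The main tool is the moving-frame (Cartan–Kähler) structure of proper Dupin hypersurfaces in Lie sphere geometry, as used by Cecil, Chi and Jensen. Work with the Legendre lift $\lambda$ of $f$ into $\Lambda^{2n-1}$ and a Lie frame adapted to the curvature spheres. The Dupin conditions are Lie invariant, and each curvature sphere map is constant along its leaves. From these facts one derives a closed differential system. It consists of the Maurer--Cartan structure equations of $O(n+1,2)$ together with the relations forced by the Dupin condition. Prolonging this system finitely many times gives a system of the form $d\theta = P(\theta)$, where the unknowns are frame coefficients and the right side is polynomial. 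The frame is then a solution of a polynomial (real analytic) Pfaffian system of finite type. Hence $\lambda$, and therefore $f$, is real analytic.

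Next comes the algebraicity step. In a neighborhood of a point, parametrize $M$ by curvature-line type coordinates in which the system integrates explicitly. Choose a point $x_0$ and the curvature spheres $K_1,\dots,K_g$ through $x_0$. By the Dupin property each focal map factors through a submanifold of codimension at least two. Along each curvature surface through a point, the hypersurface is a piece of an $m_i$-sphere, which is an algebraic set. Building $M$ near $x_0$ by successively sweeping these spheres gives a parametrization of $M$ by products of spheres. The goal is to show that the frame-defining functions satisfy polynomial ODEs along each family, with the spheres themselves as algebraic orbits. One then verifies that the coordinate functions of $f$, restricted to each iterated leaf, lie in a finitely generated field of transcendence degree $n-1$ over ${\bf R}$. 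The point of the construction is that the finite-type system makes the function field of the germ finitely generated. The Zariski closure $Z$ of the germ $f(U)$ is then irreducible, and its dimension equals the transcendence degree, which is $n-1$.

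Finally, I would pass from local to global using analyticity. For every other point $y\in M$, consider the Zariski closure of the germ at $y$. Since $M$ is connected and real analytic, any polynomial vanishing on an open subset of $f(M)$ vanishes on all of $f(M)$ by analytic continuation along paths. Hence all the local Zariski closures coincide with $Z$, and $f(M)\subset Z$. Since $M$ is connected, its image lies in a single connected component of $Z$. The embedding hypothesis is used only to regard $f(M)$ as a subset and to keep the components well defined.

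The main obstacle is the middle step. Showing real analyticity is fairly standard, but algebraicity demands more: the solutions of the structure equations must have algebraic rather than merely analytic dependence. My first attempt would be to prove that the prolonged system's structure functions are polynomial and that the leaves and focal maps are all algebraic, and then induct on $g$, using at each step that the hypersurface is an envelope of an algebraic family of spheres over the algebraic focal submanifold. If that fails, I would fall back on showing that the Lie frame group acts with algebraic orbits on the finite-dimensional jet space defined by the system.
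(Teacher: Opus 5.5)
Your overall architecture is the right shape: a local algebraic parametrization obtained by sweeping out sphere leaves, then the Zariski closure of a connected analytic germ, then propagation to all of $M$ because $P\circ f$ is analytic on connected $M$. Your last step is a correct way to pass from local to global once the local parametrization exists. But the middle step is the entire content of the theorem, and you have not proved it; the mechanism you propose for it cannot work. Finite type with polynomial structure functions gives at most analyticity, never algebraicity. For example, the logarithmic spiral $e^{t}(\cos t,\sin t)$ is an orbit of a one-parameter group of similarities, so it solves a linear finite-type system. Yet it meets each ray from the origin infinitely often, so its Zariski closure is the whole plane. Hence ``polynomial ODEs along each family'' gives no control on transcendence degree. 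Your claim that the coordinate functions of $f$ generate a field of transcendence degree $n-1$ is just a restatement of the conclusion. Your first step is also only asserted; in the paper, analyticity comes out as a consequence of the algebraic parametrization, not as an input.

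What is concretely missing is control of how the spheres are glued together in the sweep. The sphere property determines each leaf of $T_{\mu_i}$. To sweep out a neighborhood of $f(x)$, however, you must know how the leaf of $T_{\mu_j}$ through $y$ varies as $y$ runs over a leaf of $T_{\mu_i}$, $j\neq i$. Equivalently, you must know how the curvature sphere $K_j(y)$ on the line $\lambda(y)$, together with $T_{\mu_j}(y)$, varies along that leaf. The sphere property of the $T_{\mu_i}$-leaf says nothing about this: a priori, the position of $K_j(y)$ on $\lambda(y)$ is an arbitrary function on that leaf. You need a lemma showing that this dependence is algebraic, derived from the Dupin conditions for all the foliations together with the Codazzi/structure equations. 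Then the iterated sweep is a composition of rational parametrizations of spheres with algebraic maps, and only with such a lemma does the sweep become the analytic algebraic parametrization the paper's argument rests on. For instance, on a torus of revolution, after clearing denominators, the Lie coordinates of $K_2$ along a meridian are linear in $(1,\cos\theta,\sin\theta)$, exactly like those of the meridian's point spheres.

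Your suggested induction on $g$ via focal submanifolds does not supply this lemma. The focal submanifold, together with the radius function $1/\mu$ needed to recover $M$ as an envelope, is not a proper Dupin hypersurface with fewer principal curvatures, so proving it algebraic is no easier than the original problem. Your fallback does not help either: a solution of the structure equations is an integral manifold, not a group orbit, so algebraicity of orbits in a jet space says nothing about it.
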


{\rm The main idea of the proof of this theorem is due to
Pinkall, who sent a letter \cite{P6} to T. Cecil in 1984 that contained a sketch of the proof.  
However, Pinkall did not publish 
a proof, and a full proof based on Pinkall's sketch 
was not published until 2008 by Cecil, Chi and Jensen \cite{CCJ3}.
The proof makes use of the various principal foliations whose leaves are open subsets of spheres to construct an analytic algebraic parametrization of a 
neighborhood of $f(x)$ for each point $x \in M$. From this, one can get the final conclusion stated above
by using methods of real algebraic geometry.

In contrast to the situation for isoparametric hypersurfaces, however, a connected proper Dupin hypersurface 
in ${\bf R}^n$ or $S^n$ does not necessarily lie in a compact, connected proper Dupin hypersurface, 
as the following example illustrates.
Let $M^3$ be a tube of sufficiently small radius over a torus
$T^2 \subset {\bf R}^3 \subset {\bf R}^4$ so that $M^3$ is a compact, connectd hypersurface in ${\bf R}^4$.
The tube $M^3$ is Dupin, not proper Dupin, 
since there are only two distinct principal
curvatures on the set $T^2 \times \pm \{\epsilon\}$ in $M^3$, but three distinct
principal curvatures on the complementary open subset $U \subset M^3$.   
Each connected open subset of  $U$ is a proper Dupin hypersurface in ${\bf R}^4$
with $g=3$ principal curvatures that is contained in the compact, connected algebraic hypersurface $M^3$. 
However, $M^3$ itself is only Dupin, and not proper Dupin (see \cite[p. 69]{Cec9} for more details).

The algebraicity,
and hence analyticity, of proper Dupin hypersurfaces was useful in clarifying certain points in the
2007 paper \cite{CCJ2} of Cecil, Chi and Jensen on proper Dupin hypersurfaces with four principal curvatures.}
\end{remark}

\section{Relationship between Taut and Dupin}
\label{sec:relationship-taut-dupin}

In this section, we examine the relationship between the taut
and Dupin conditions for submanifolds of ${\bf R}^n$.  Many of these results also apply to submanifolds of the
other real space forms.
(See \cite[pp. 65--74]{CR8} for more details on these results.)

As noted in Section \ref{sec:dupin-submanifolds},
Thorbergsson \cite{Th1} showed in Theorem \ref{thm:proper-Dupin-implies-taut}
that if $M$ is a complete, connected proper
Dupin hypersurface embedded in a real space form $\widetilde{M}^n$, then $M$ is taut with respect to 
${\bf Z}_2$-coefficients.

Pinkall \cite{P4} then extended Thorbergsson's theorem
 to proper Dupin submanifolds of higher codimension in ${\bf R}^n$, 
as defined in Remark \ref{rem:dupin-submanifolds-of-higher-codimension}.
In particular, Pinkall  showed if $M$ is an embedded submanifold of ${\bf R}^n$
and $M_{\epsilon}$ is a tube of sufficiently small radius $\epsilon$
so that $M_{\epsilon}$ is
embedded in ${\bf R}^n$, then $M$ is taut with respect to 
${\bf Z}_2$-coefficients if and only if $M_{\epsilon}$ is taut
with respect to ${\bf Z}_2$-coefficients.  This can be combined
with Thorbergsson's result to yield the following.
\begin{theorem}
\label{thm:taut-codimension-greater-than-1}
Let $M$ be a compact proper Dupin submanifold of codimension $m \geq 1$ embedded in ${\bf R}^n$.
Then $M$ is taut with respect to ${\bf Z}_2$-coefficients.
\end{theorem}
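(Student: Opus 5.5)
The plan is to reduce the higher-codimension case to the hypersurface case, using Pinkall's tube lemma together with Theorem \ref{thm:proper-Dupin-implies-taut}. When $m=1$ the statement is exactly Theorem \ref{thm:proper-Dupin-implies-taut} applied with $\widetilde{M}^n={\bf R}^n$, so assume $m\geq 2$. Since $M$ is compact and embedded, choose $\epsilon>0$ smaller than the distance from $M$ to its focal set along unit normals, and small enough that the tube map
$$\phi_\epsilon(x,\xi)=x+\epsilon\xi$$
is an embedding of the unit normal bundle $B$ of $M$ onto the tube $M_\epsilon$. Then $M_\epsilon$ is a compact, connected (for $m\geq 2$) embedded hypersurface of ${\bf R}^n$.

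The first key step is to show that $M_\epsilon$ is proper Dupin. At a point $(x,\xi)\in B$, the standard tube formulas give the principal curvatures of $M_\epsilon$ with respect to the outward normal $\xi$. On the horizontal directions (via $\nabla^\perp$) they are the values $\lambda/(1-\epsilon\lambda)$, where $\lambda$ ranges over the principal curvatures of $A_\xi$, and the principal spaces are the horizontal lifts of the eigenspaces of $A_\xi$. On the $(m-1)$-dimensional fiber sphere there is one further principal curvature, $-1/\epsilon$ (up to sign convention).

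Since $\lambda\mapsto\lambda/(1-\epsilon\lambda)$ is injective and, for small $\epsilon$, never equals $-1/\epsilon$, the number of distinct principal curvatures of $M_\epsilon$ is $g(\xi)+1$, where $g(\xi)$ is the number of distinct eigenvalues of $A_\xi$. This is constant by the proper Dupin hypothesis. For the Dupin condition:
\begin{enumerate}
\item[(i)] The fiber spheres are curvature surfaces on which $-1/\epsilon$ is constant.
\item[(ii)] A curvature surface of $M_\epsilon$ for one of the other principal curvatures is the lift $\{(x,\eta(x))\}$ of a curvature surface $S$ of $M$ with parallel section $\eta$ (Reckziegel's definition in Remark \ref{rem:dupin-submanifolds-of-higher-codimension}). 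Along it, constancy of $\lambda$ gives constancy of $\lambda/(1-\epsilon\lambda)$.
\end{enumerate}
Conversely, we must check that every curvature surface of $M_\epsilon$ arises this way. This follows because a horizontal curve in $B$ is precisely a parallel normal field along its projection.

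The second step is to apply Theorem \ref{thm:proper-Dupin-implies-taut}: $M_\epsilon$ is a complete, connected proper Dupin hypersurface, hence taut with ${\bf Z}_2$-coefficients.

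The third step is to transfer tautness back to $M$ via Pinkall's result: $M$ is ${\bf Z}_2$-taut if and only if $M_\epsilon$ is. I can take this as given. For the reader I would sketch the argument. For $p$ not focal to $M$ or $M_\epsilon$, each critical point $x$ of $L_p$ on $M$ with $p=x+t\xi$ yields exactly two critical points $(x,\pm\xi)$ of $L_p$ on $M_\epsilon$, and these are the only ones, so $\mu(L_p,M_\epsilon)=2\mu(L_p,M)$. The Gysin/Leray–Hirsch argument with ${\bf Z}_2$-coefficients for the sphere bundle $B\to M$ gives $\beta(B;{\bf Z}_2)=2\beta(M;{\bf Z}_2)$. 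Combining these with the Morse inequalities on both manifolds gives the equivalence of perfectness.

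I expect the main obstacle to be the first step, specifically the identification of curvature surfaces. Eigenvalues of $A_\xi$ can coincide along special normal directions, which would break constancy of multiplicities on $M_\epsilon$. Here the precise form of the proper Dupin hypothesis (the count of distinct eigenvalues of $A_\xi$ is constant over all of $B$) is exactly what is needed, so I would verify carefully that this constancy gives smooth principal distributions on $M_\epsilon$.
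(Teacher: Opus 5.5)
Your proposal is correct and follows the paper's route: the paper gets this theorem by noting that a small tube $M_\epsilon$ over a compact proper Dupin submanifold is a compact proper Dupin hypersurface, applying Thorbergsson's Theorem \ref{thm:proper-Dupin-implies-taut} to $M_\epsilon$, and transferring tautness back to $M$ via Pinkall's result that $M$ is ${\bf Z}_2$-taut if and only if $M_\epsilon$ is. In your optional sketch of Pinkall's lemma, note that $\beta(B;{\bf Z}_2)=2\beta(M;{\bf Z}_2)$ does not hold for arbitrary sphere bundles; it holds here because the mod 2 Euler class $w_m$ of the normal bundle of a submanifold embedded in ${\bf R}^n$ vanishes, so the Gysin sequence splits.
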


In the opposite direction of Theorem \ref{thm:taut-codimension-greater-than-1},
Pinkall \cite{P4}, and independently Miyaoka \cite{Mi8} (in the case of codimension one), proved the following
theorem, which is also valid for submanifolds of $S^n$. 
(See  \cite[pp. 68--71]{CR8} for a proof.)

\begin{theorem}
\label{thm:taut-implies-dupin} 
Every taut submanifold in ${\bf R}^n$ is Dupin (but not necessarily proper Dupin).
\end{theorem}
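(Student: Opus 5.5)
The plan is to reduce to hypersurfaces and then argue by contradiction using the \v{C}ech formulation (\ref{eq:def-taut}) of tautness with closed balls.

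\emph{Reduction to hypersurfaces.} If $\phi:V \rightarrow {\bf R}^n$ has codimension greater than one, pass to a tube $M_\epsilon$ of small radius. By Pinkall's result quoted before Theorem \ref{thm:taut-codimension-greater-than-1}, $V$ is taut if and only if $M_\epsilon$ is taut. Moreover, curvature surfaces of $V$ (in Reckziegel's sense, with a parallel normal section $\eta$) lift to curvature surfaces of $M_\epsilon$, and the principal curvatures of $M_\epsilon$ are determined by those of $A_\eta$ through the usual tube formulas $\lambda/(1-\epsilon\lambda)$. Hence $V$ is Dupin if $M_\epsilon$ is Dupin, and it suffices to treat an embedded hypersurface $M$.

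\emph{The hypersurface case.} Let $S$ be a curvature surface with principal curvature $\mu$. If $\dim S \ge 2$, then $\mu$ is constant on $S$ by the Codazzi argument recalled in Section \ref{sec:dupin-submanifolds}. So the real content is the case where $S$ is a line of curvature $\gamma(s)$ with $\mu$ nonconstant along $\gamma$. Assume this, and pick a point $x_0=\gamma(0)$ with $\mu'(0)\neq 0$ and $\mu(x_0)\neq 0$ (the case $\mu=0$ can be moved away by a M\"obius transformation, under which both tautness and the Dupin condition are invariant). Let $p_0 = f_\mu(x_0)$ be the focal point. Along $\gamma$ the focal map $f_\mu$ has nonzero derivative, since
\begin{displaymath}
\frac{d}{ds} f_\mu(\gamma(s)) = -\frac{\mu'}{\mu^2}\,\xi \neq 0
\end{displaymath}
after the $\gamma'$ terms cancel. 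Thus the focal points slide along the normal line as one moves along $\gamma$.

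\emph{Constructing the violating ball.} Take $p = p_0 + \delta\,\xi(x_0)$ slightly beyond or short of the focal point, on the appropriate side determined by the sign of $\mu'$. Consider the distance function $L_p$ restricted to the two-dimensional section of $M$ spanned by the $\gamma$ direction and the normal, or equivalently to $\gamma$ itself. Expanding $L_p(\gamma(s))$ to third order gives
\begin{displaymath}
L_p(\gamma(s)) - L_p(x_0) = a\,s^2 + b\,s^3 + O(s^4),
\end{displaymath}
where $a$ is proportional to $\delta$ and $b$ is proportional to $\mu'(0)\neq 0$. Choosing the sign of $\delta$ so that $x_0$ is a critical point of $L_p$ of index $k$, the cubic term forces a nearby critical point along $\gamma$ that is a local extremum in that direction. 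The effect is that the sublevel set $M_r(L_p)$, for $r$ just above $L_p(x_0)$, acquires a small $k$-cycle near $x_0$ that bounds in $M$: it is a ``cancelling pair'' of critical points of indices $k$ and $k+1$ that are not of linking type. This contradicts the injectivity of $H_k(f^{-1}B)\to H_k(M)$ for the closed ball $B$ bounded by the corresponding level sphere. A cleaner way to phrase this is through the \v{C}ech criterion applied to the degenerate ball centered at the focal point itself. Tautness passes to the limit ball, so $f^{-1}B$ must have injective homology. Near $x_0$, however, $f^{-1}B$ is locally a product of a $k$-disk in the other principal directions with an arc of $\gamma$ cut out by an intersection that is only tangent to odd order. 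Its local topology then produces a nonzero class in $H_k(f^{-1}B)$ that dies in $M$.

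\emph{Main obstacle.} The hardest step is the local homology computation near the degenerate critical point. One must show that when $\mu$ varies along its line of curvature, the closed ball through the focal point meets $M$ in a set whose \v{C}ech homology has a class killed in $M$. I would first try to localize: use excision and a Morse lemma with parameters to split $L_p$ near $x_0$ as a nondegenerate quadratic form in the other principal directions plus a one-variable function $h(s)=as^2+bs^3$. Then, by a K\"unneth-type argument, reduce to the one-dimensional fact that a generic perturbation of $h$ has two critical points, neither of linking type.
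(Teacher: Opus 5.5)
Your reductions are essentially sound: tubes, Codazzi for $\dim S\ge 2$, an inversion to arrange $\mu(x_0)\neq 0$, and $(f_\mu\circ\gamma)'(0)=-(\mu'/\mu^2)\,\xi\neq 0$. The underlying idea, that a cancelling pair of critical points of some $L_p$ is incompatible with tautness, is also viable. But the proof stops exactly where the content lies. You never exhibit a ball $B$ for which $H_*(f^{-1}B)\to H_*(M)$ fails to be injective, and each mechanism you sketch is flawed.

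(i) Critical points of $s\mapsto L_p(\gamma(s))=L_p(x_0)+as^2+bs^3+\cdots$ are not critical points of $L_p$ on $M$. The second critical point near $x_0$ comes instead from a Whitney fold of the normal exponential map $F$ at $(x_0,\xi/\mu)$, which you would have to verify. In the coordinates $(y,t)\mapsto f(y)+t\xi(y)$, the singular set near there is $\{t=1/\mu(y)\}$ and $\ker F_*$ is spanned by $(\gamma'(0),0)$. This kernel is transverse to the singular set exactly when $\gamma'(\mu)\neq 0$.

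(ii) The ``degenerate ball'' shortcut is false. By the splitting lemma, $L_{p_0}=r_0-|u_-|^2+|u_+|^2+cs^3+O(s^4)$ near $x_0$ with $c\neq 0$. This is an $A_2$ singularity, which is topologically a regular point, so $f^{-1}B$ is locally a half-ball and carries no local homology at all.

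(iii) Passing the index-$k$ point attaches a $k$-cell; it does not create a small $k$-cycle near $x_0$. In the local model the index-$k$ point may well be of linking type, so ``neither of linking type'' is wrong. Since linking type is a global notion, excision plus a K\"unneth/suspension computation cannot by itself give the contradiction.

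What is actually needed is handle cancellation. Take $p$ near the fold on the side where $L_p$ is Morse with two nearby critical points $x_a,x_b$ (indices $k,k+1$, values $c_a<c_b$). The descending $k$-sphere of $x_b$ in the level $c\in(c_a,c_b)$ meets the belt sphere of $x_a$ in exactly one point. So it is nonzero in $H_k(M_c)$, since its image in $H_k(M_c,M_{c_a-\varepsilon})$ is the generator, while it bounds the descending disk of $x_b$. Hence $H_k(M_c)\to H_k(M)$ is not injective. Making this rigorous also requires controlling flow lines that leave a neighborhood of $x_0$ and return, and other critical points with nearby values. None of this is in the proposal.

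The paper does not prove this statement itself; it cites Pinkall's proof in \cite{CR8}. For compact $M$, however, it derives the stronger Theorem~\ref{thm:taut-implies-semi-Dupin} from Ozawa's Theorem~\ref{thm:Ozawa}, which packages exactly the Morse--Bott analysis you are missing. In your setting this closes the gap at once. At a fold point, $F^{-1}(p_0)$ near $(x_0,\xi/\mu)$ is a single point. So $x_0$ is an isolated point of the critical set of $L_{p_0}$, even though the nullity of the Hessian there is $1$, contradicting part (a).

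The paper argues positively instead. The component $S$ of the critical set through $x$ is an $m$-dimensional manifold on which $f_\lambda\equiv p$. Then $(f_\lambda)_*X=0$ forces $A_\xi X=\lambda X$ and $\nabla^\perp_X\xi=0$, so $S$ is the curvature surface and $\lambda$ is constant on it.

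Ozawa's theorem is stated only for compact $M$. For non-compact taut submanifolds, which the statement includes, your cancellation argument would have to be carried out in full. Note also that a non-compact $V$ need not admit an embedded tube of constant radius, although the fold analysis can be done directly in the normal bundle of $V$.
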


\begin{remark}{Taut does not imply proper Dupin}
\label{rem:dupin-not-proper}

\noindent
{\rm Note that a taut
submanifold need not be proper Dupin.  For example, as noted in Section \ref{sec:dupin-submanifolds}, 
a tube $M^3 \subset {\bf R}^4$ of sufficiently small radius $\epsilon$ over a torus of
revolution $T^2 \subset {\bf R}^3 \subset {\bf R}^4$ is taut,
but not proper Dupin, since there are only two distinct principal
curvatures on the set $T^2 \times \pm \{\epsilon\}$, but three distinct
principal curvatures elsewhere on $M$ (see Pinkall \cite{P2}, \cite[p. 69]{Cec9}).  
Pinkall's other constructions can also be used to produce taut 
hypersurfaces that are Dupin, but not proper Dupin (see  Remark \ref{rem:pinkall-constructions-compactness}
 and \cite[pp. 185--190]{CR7}).}
\end{remark}

Of course, if $M \subset {\bf R}^n$  is a compact, connected hypersurface on which the number $g$ 
of distinct principal curvatures is assumed to be constant, then $M$ is Dupin implies that $M$ is proper Dupin. 
In that case, Theorems \ref{thm:taut-codimension-greater-than-1} and  \ref{thm:taut-implies-dupin}
imply that $M$ is taut if and only if $M$ is proper Dupin.  Thus, we have the following theorem, which was 
first proven for compact surfaces in ${\bf R}^3$ by Banchoff \cite{Ban1}.

\begin{theorem}
\label{thm:taut-dupin}
Let  $M \subset {\bf R}^n$ be a compact, connected hypersurface on which the number $g$ of distinct principal curvatures is constant.  Then $M$ is taut if and only if $M$ is proper Dupin.
\end{theorem}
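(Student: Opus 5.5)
The plan is to prove the two implications separately, using Theorems \ref{thm:taut-codimension-greater-than-1} and \ref{thm:taut-implies-dupin}. Together with the hypothesis that $g$ is constant, these carry essentially all the weight.

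For ``proper Dupin $\Rightarrow$ taut'', $M$ is compact, connected, and embedded, since it is a hypersurface of ${\bf R}^n$ in the sense of the statement. So Theorem \ref{thm:taut-codimension-greater-than-1} applies directly in the case of codimension $m=1$, and $M$ is taut with respect to ${\bf Z}_2$-coefficients. Alternatively, one can invoke Thorbergsson's Theorem \ref{thm:proper-Dupin-implies-taut}: a compact manifold is complete, and ${\bf R}^n$ is a real space form. Nothing further is needed in this direction.

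For ``taut $\Rightarrow$ proper Dupin'', Theorem \ref{thm:taut-implies-dupin} shows that $M$ is Dupin, i.e.\ condition (a) of Definition \ref{def:dupin} holds. Condition (b) is exactly the standing hypothesis that $g$ is constant on $M$. Hence $M$ is proper Dupin.

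The one point I would check carefully is that the notion of ``Dupin'' delivered by Theorem \ref{thm:taut-implies-dupin} matches condition (a) on all of $M$, including the behavior of the principal curvature functions. The hypothesis that $g$ is constant makes each continuous principal curvature have constant multiplicity, as remarked after Definition \ref{def:dupin}. Consequently the principal curvatures are smooth and the principal distributions are integrable foliations globally on $M$, not merely on an open dense set. The curvature surfaces are then the leaves of these foliations, and condition (a) is meaningful everywhere. With that verification in place, both directions reduce to the cited theorems, and the real depth lies in Thorbergsson's cycle construction and the Pinkall--Miyaoka argument, which we take as given.
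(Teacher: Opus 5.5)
Your proposal is correct and matches the paper's argument. The paper likewise notes that constancy of $g$ upgrades Dupin to proper Dupin, then combines Theorem \ref{thm:taut-codimension-greater-than-1} (proper Dupin implies taut) with Theorem \ref{thm:taut-implies-dupin} (taut implies Dupin); your extra check that constant $g$ forces constant multiplicities is a harmless detail the paper leaves implicit.
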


If $M$ is a compact manifold, we can use a theorem
of Ozawa \cite{Oz} to give a proof of a result which is slightly
stronger than Theorem \ref{thm:taut-implies-dupin}.
As we noted in Section \ref{sec:dupin-submanifolds} (see also \cite[p. 33]{CR8}), 
if a curvature surface $S$ has dimension greater
than one, then the corresponding principal curvature is always
constant along $S$, even without the assumption of tautness.  Thus,
Pinkall's proof of Theorem \ref{thm:taut-implies-dupin}
consisted in showing that tautness implies that along any 1-dimensional curvature surface (line of curvature),
the corresponding principal curvature is constant.

Note that we are using
Pinkall's definition of Dupin (Definition \ref{def:dupin}), which does not require that for
every principal space $T_{\mu}$ at every point $x \in M$, there exists
a curvature surface $S$ through $x$ whose tangent space at $x$
is $T_{\mu}$.  However, using the following result of
Ozawa \cite{Oz}, we can show that tautness implies that this
property does hold on $M$ (see Theorem \ref{thm:taut-implies-semi-Dupin}).  

We first state Ozawa's result
and then use it to derive this consequence.  
Ozawa proved his result using Morse-Bott critical point theory (see \cite{BS})
and a careful analysis of the critical submanifolds, and we refer the reader to Ozawa's paper for a complete proof.

\begin{theorem}{(Ozawa)}
\label{thm:Ozawa}
Let $M$ be a taut compact, connected submanifold of ${\bf R}^n$, 
and let $L_p$ be a Euclidean distance function on $M$.
Let $x \in M$  be a critical point of $L_p$, and let $S$ be the
connected component of the critical set of $L_p$ which contains
$x$.  Then $S$ is
\begin{enumerate}
\item[${\rm(a)}$] a smooth compact manifold of dimension equal to the nullity
of the Hessian of $L_p$ at the critical point $x$,
\item[${\rm(b)}$] non-degenerate as a critical manifold,
\item[${\rm(c)}$] taut in ${\bf R}^n$.
\end{enumerate}
\end{theorem}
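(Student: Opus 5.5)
The plan is Morse--Bott theory for $L_p$ combined with the \v{C}ech formulation (\ref{eq:def-taut}) of tautness, working with ${\bf Z}_2$-coefficients throughout. We first set up the local picture at $x$. Write $p = f(x) + t\xi$ with $|\xi|=1$, and let $r = L_p(x)$. The Hessian of $L_p$ at $x$ is $2(I - tA_\xi)$, so its nullity is $m$, the multiplicity of $1/t$ as a principal curvature of $A_\xi$. The Hessian is also positive definite on the eigenspaces with $\lambda < 1/t$ and negative on the others.

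\textbf{Step 1 (local structure and part (a)).} We show that, near $x$, the critical set of $L_p$ is exactly a curvature surface through $x$ of dimension $m$ lying on the sphere $\partial B$, where $B$ is the closed ball of radius $\sqrt r$ about $p$. Since the critical points of $L_p$ in the sphere $\{L_p=r\}$ are precisely the points where $M$ is tangent to that sphere, the claim reduces to showing that $f^{-1}(\partial B)$ contains an $m$-dimensional piece through $x$.

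\begin{itemize}
\item If $m \geq 2$, the Codazzi equation gives a principal foliation whose leaf through $x$ is an open subset of an $m$-sphere lying in $\partial B$ (as in the discussion after Definition \ref{def:dupin}).
\item If $m=1$, we invoke Theorem \ref{thm:taut-implies-dupin}: the line of curvature through $x$ has constant principal curvature $1/t$, so it is a circle arc on $\partial B$ consisting of critical points.
\end{itemize}

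This requires the principal curvature $1/t$ to have locally constant multiplicity near $x$, which we cannot assume a priori. We therefore expect to need a tautness argument here, which is where I expect the main obstacle to be. The approach is to perturb $p$ slightly along the normal line, to $p' = f(x) + t'\xi$ with $t'$ slightly larger than $t$. Then $L_{p'}$ is nondegenerate near $x$ with index jumping by $m$, and tautness (injectivity of $H_*(M_{r'})\to H_*(M)$, i.e.\ all critical points are of linking type) prevents the extra index from being carried by a degenerate, lower-dimensional critical set. A lower-dimensional critical set would create a critical point whose cell attaches without producing new homology, which contradicts perfectness of nearby Morse functions $L_{p'}$.

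Once the local structure is established, $S$ is locally a smooth $m$-manifold, and it is closed, hence compact, which gives (a).

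\textbf{Step 2 (part (b)).} Nondegeneracy of $S$ as a critical manifold means that the Hessian of $L_p$ restricted to the normal space of $S$ in $M$ is nondegenerate. This follows from Step 1, since $T_yS$ equals the full kernel of $I - tA_{\xi(y)}$ at each $y\in S$, i.e.\ the nullity is constant and equal to $\dim S$ along $S$.

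\textbf{Step 3 (part (c)).} For tautness of $S$, we take any closed ball $B'$ and show that $H_*(S\cap B') \to H_*(S)$ is injective.

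\begin{itemize}
\item Consider the family of balls $B_\varepsilon$ interpolating between $B$ and $B'$, chosen with sphere boundaries in the pencil through $\partial B\cap\partial B'$ and tilted slightly inward. Then $f^{-1}(B_\varepsilon)$ is homotopy equivalent to $M_{r-\delta}(L_p)\cup (S\cap B')$ with a disk bundle attached.
\item By Morse--Bott theory (Step 2), $H_*(M_r, M_{r-\delta}) \cong H_{*-\lambda}(S)$, twisted by the negative bundle; over ${\bf Z}_2$ this is a Thom isomorphism, so no orientability issue arises.
\item Tautness of $M$ gives injectivity of $H_*(f^{-1}B_\varepsilon)\to H_*(M)$, which by exactness and naturality transfers to injectivity of $H_*(S\cap B')\to H_*(S)$.
\end{itemize}

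Throughout we use \v{C}ech homology so that limits of balls are allowed. Bookkeeping with exact sequences makes this step routine once Step 1 holds.
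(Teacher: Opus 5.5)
\paragraph{Verdict.}
The proposal has a genuine gap in Step~1, which carries all the weight of the theorem. The paper does not prove this result; it only says that Ozawa used Morse--Bott theory with a careful analysis of the critical submanifolds. So your proposal has to stand on its own.

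\paragraph{What works.}
Part (b) follows from (a) applied at every point of $S$. Your Step~3 transfer (short exact sequences, snake lemma, ${\bf Z}_2$ Thom isomorphism) is sound once (a) and (b) hold for every critical component at level $r$. Two adjustments are needed there: all those components must be treated together, and $f^{-1}(B_\varepsilon)$ sits between $M_{r-\delta}$ and $M_{r+\delta}$, not inside $M_r$.

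\paragraph{The gap in Step~1.}
The local picture Step~1 aims for is false in general. The Codazzi argument giving a principal foliation whose leaves are open pieces of $m$-spheres requires the multiplicity of $1/t$ to be locally constant (on a neighborhood in the unit normal bundle, in higher codimension). The theorem has real content exactly where this fails.
\begin{itemize}
\item Take the taut tube $M^3\subset{\bf R}^4$ over a torus of revolution (Remark~\ref{rem:dupin-not-proper}). The hyperplane $x_4=\epsilon$ touches $M^3$ exactly along $T^2\times\{\epsilon\}$. There the principal curvature $0$ has multiplicity $2$, but it splits into two distinct principal curvatures at nearby points. After an inversion centered off $M^3$ and off that hyperplane, this torus becomes an entire component $S$ of the critical set of some $L_q$. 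It has nullity $2$ and is not a piece of a $2$-sphere.
\item More simply, if $M$ lies on a sphere centered at $p$ (e.g.\ a Clifford torus in $S^3\subset{\bf R}^4$), then $S=M$.
\end{itemize}
This is why (c) asserts that $S$ is taut, not that it is spherical.

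\paragraph{Why your tools do not bridge it.}
You rightly flag the multiplicity problem, but nothing you invoke fixes it.
\begin{itemize}
\item Theorem~\ref{thm:taut-implies-dupin} only says the principal curvature is constant along curvature surfaces that already exist. The existence of a curvature surface through $x$ tangent to the whole null space, when the multiplicity jumps, is Theorem~\ref{thm:taut-implies-semi-Dupin}(b). The paper deduces that from Ozawa's theorem, so using it here is circular.
\item The reduction to ``$f^{-1}(\partial B)$ contains an $m$-dimensional piece through $x$'' is wrong. Near $x$, $M$ typically meets $\partial B$ in a set of dimension $\dim M-1$ made mostly of non-critical, transversal points. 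You also never rule out other critical points of $L_p$ near $x$ in the same component.
\item The perturbation paragraph is a slogan, not an argument. A degenerate critical set has no attached cell, and nothing in that paragraph produces a smooth $m$-dimensional set of critical points.
\end{itemize}

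\paragraph{What is actually required.}
By the splitting lemma, near $x$ you can write $L_p=r+Q(u)+h(w)$. Here $Q$ is nondegenerate of index $k$, $w$ ranges over the $m$-dimensional null space, and $h$ has vanishing $2$-jet at $0$. Then (a) near $x$ is equivalent to $h\equiv 0$ near $0$.

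You need a genuinely local consequence of tautness strong enough to rule out every nonzero $h$, including flat $h$ and $h$ whose nearby critical points at other levels accumulate at $x$. One candidate is a local form of perfectness for each critical component of $L_p$, played off against the perturbations $q=f(x)+(t\pm\delta)\xi$, which make $x$ nondegenerate of index $k$, respectively $k+m$. That analysis is the heart of Ozawa's theorem, and the proposal does not supply it.
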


\begin{remark}{Critical sets of distance functions on taut embeddings}
\label{ozawa-critical-set}

\noindent
{\rm Part (a) of Theorem \ref{thm:Ozawa} implies that for each $p \in {\bf R}^n$, the critical set of $L_p$ is a union of smooth, compact submanifolds of ${\bf R}^n$.
Note that the critical set of $L_p$ is the pre-image of $p$ under the normal exponential map of the submanifold $M$.  Thus, part (a) of Theorem \ref{thm:Ozawa} implies that
for each $p \in {\bf R}^n$, the pre-image of $p$ under the normal exponential map is a union of submanifolds.}
\end{remark}

Using Ozawa's Theorem \ref{thm:Ozawa}, we can prove the following result.

\begin{theorem}
\label{thm:taut-implies-semi-Dupin}
Let $M$ be a taut compact, connected submanifold of ${\bf R}^n$.
Then
\begin{enumerate}
\item[${\rm(a)}$] $M$ is a Dupin submanifold.
\item[${\rm(b)}$]  Given a principal space $T_{\lambda}$ of a shape operator
$A_{\xi}$ at a point $x \in M$, there exists a curvature
surface $S$ through $x$ whose tangent space at $x$ is equal to
$T_{\lambda}$, and $\lambda$ is constant along $S$.
\end{enumerate}
\end{theorem}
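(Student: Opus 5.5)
Part (a) is Theorem \ref{thm:taut-implies-dupin}, so the real work is part (b); I will in fact derive both from Ozawa's Theorem \ref{thm:Ozawa}. Fix $x \in M$, a unit normal $\xi$ at $x$, and a nonzero principal curvature $\lambda$ of $A_\xi$ with principal space $T_\lambda$ of dimension $m$. (If $\lambda = 0$, first apply a M\"{o}bius transformation, which preserves tautness, curvature surfaces and the principal-space structure, to make the corresponding principal curvature nonzero.) Let $p = f(x) + \frac{1}{\lambda}\xi$ be the corresponding focal point, and consider the distance function $L_p$.

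The first step is the local computation. $L_p$ has a critical point at $x$, since $p$ lies on the normal space at $f(x)$. Its Hessian at $x$ is a multiple of $I - \frac{1}{\lambda}A_\xi$, restricted in the appropriate way to the normal component $p - f(x)$. Hence its null space is exactly $T_\lambda$, so the nullity is $m$.

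Next apply Theorem \ref{thm:Ozawa}. The component $S$ of the critical set of $L_p$ through $x$ is a smooth compact $m$-manifold and is non-degenerate as a critical manifold. The tangent space $T_xS$ lies in the null space of the Hessian. Since the dimensions agree, $T_xS = T_\lambda$, and the same holds at every point $y \in S$ for the corresponding data.

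It remains to show that $S$ is a curvature surface on which $\lambda$ is constant, and this is the key step. For $y \in S$, set $\eta(y) = (p - f(y))/|p - f(y)|$. This is a unit normal field along $S$, because every point of $S$ is critical for $L_p$, so $p - f(y)$ is normal at $y$. Differentiate the identity $f(y) + r(y)\eta(y) = p$ along $S$, where $r = |p - f|$ is constant on $S$ because $L_p$ is constant on a connected critical set. This gives
\[
X + r\,(-A_\eta X + \nabla^\perp_X \eta) = 0
\]
for $X \in T_yS$. Taking tangential and normal components yields $A_\eta X = \frac{1}{r}X$ and $\nabla^\perp_X\eta = 0$. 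So $\eta$ is parallel along $S$, each $T_yS$ lies in the $\frac{1}{r}$-eigenspace of $A_{\eta(y)}$, and the principal curvature $\frac{1}{r} = \lambda$ is constant on $S$.

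Finally, non-degeneracy of $S$ as a critical manifold says that the nullity of the Hessian at every $y \in S$ equals $\dim S = m$. Therefore $T_yS$ is the whole $\frac{1}{r}$-eigenspace of $A_{\eta(y)}$. Thus $S$ is a curvature surface in Reckziegel's sense (Remark \ref{rem:dupin-submanifolds-of-higher-codimension}) through $x$ with $T_xS = T_\lambda$, which proves (b).

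For (a), take any curvature surface $S'$ with parallel normal section $\eta$ and principal curvature $\mu$. Through each point, the surface $S$ constructed above has the same tangent space and is an integral manifold of the same eigendistribution. Near points where the multiplicity is locally constant, this distribution is smooth and integrable, so $S'$ coincides locally with $S$ and $\mu$ is locally constant on it. By continuity, $\mu$ is constant on $S'$.

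I expect the main obstacle to be the bookkeeping with multiplicities. One must check that the Hessian nullity really equals $\dim T_\lambda$ when $\xi$ is only one normal among many in higher codimension; the normal directions orthogonal to $\xi$ do not contribute at $x$, so this holds. One must also use non-degeneracy of $S$, rather than just its dimension at $x$, to get $T_yS$ equal to the full eigenspace at the other points $y \in S$.
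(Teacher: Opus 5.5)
Your proof of (b) follows the paper's proof. Both apply Ozawa's theorem to $L_p$ at the focal point $p=f(x)+\frac{1}{\lambda}\xi$, extend the normal along the critical manifold $S$ via $p-f(y)$, differentiate the constant focal map to get $A X=\lambda X$ and $\nabla^{\perp}_X=0$, count dimensions, and handle $\lambda=0$ by an inversion.

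There are two small repairs. First, for $\lambda<0$ your $\eta=(p-f)/|p-f|$ equals $-\xi$ at $x$, so its eigenvalue is $\frac{1}{r}=|\lambda|$, not $\lambda$; use $\lambda(p-f(y))$ as the paper does, or first replace $\xi$ by $-\xi$. Second, for (a) just cite Theorem \ref{thm:taut-implies-dupin}. Alternatively, handle curvature surfaces of dimension $\geq 2$ by Codazzi, and lines of curvature by uniqueness of integral curves of the simple-eigenvalue line field (lifted horizontally to $B^{n-1}$). Your ``by continuity'' step says nothing about a curvature surface lying entirely where multiplicities jump, such as $T^2\times\{\epsilon\}$ in the tube example of Remark \ref{rem:dupin-not-proper}.
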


\begin{proof}
Note that part (b) implies part (a), so we will prove part (b).
Let $f:M \rightarrow {\bf R}^n$ be a taut embedding.
Let $\xi$ be a unit normal vector at an arbitrary
point $x \in M$, and let $\lambda$ be a principal curvature of
$A_{\xi}$.  We first consider the case where $\lambda \neq 0$. Let $p = f(x) + (1/ \lambda)\xi$ be the focal point
of $(M,x)$ determined by the principal curvature $\lambda$ of $A_{\xi}$.
Then the distance function $L_p$ has a degenerate critical point at $x$
and the nullity of the Hessian of $L_p$ at $x$ is equal to the
multiplicity $m$ of $\lambda$ as an eigenvalue of $A_{\xi}$ 
(see \cite[p. 36]{Mil}).  By Ozawa's Theorem \ref{thm:Ozawa}, the connected
component $S$ of the critical set of $L_p$ containing $x$ is
a smooth submanifold (a critical submanifold) of dimension $m$.  We will now show that $S$
is the desired curvature surface and that the corresponding
principal curvature is constant along $S$.

The function $L_p$ has a constant value, which is $1/{\lambda}^2$,
on the critical submanifold $S$.  Thus, for every point $y \in S$, 
the vector $p-f(y)$ is normal to $f(M)$ at $f(y)$, and it has
length $1/|\lambda|$.  So we can extend the normal vector $\xi$ to
a unit normal vector field to $f(M)$ along $S$,
which we also denote by $\xi$, by setting $\xi(y) = \lambda (p-f(y))$.
Note that $p$ is a focal point of $(M,y)$ for every point
$y \in S$, and Ozawa's theorem implies that the number $\lambda$
is an eigenvalue of $A_{\xi(y)}$ of multiplicity
$m$ = dim $S$ for every point $y \in S$.  Thus, the principal
curvature $\lambda$ is constant along $S$.  We next show that $T_yS$ equals the 
principal space $T_{\lambda}(y)$ at each point $y \in S$, and that 
the normal field $\xi$ is parallel along $S$ with respect to the
normal connection.  Consider the focal map,
\begin{displaymath}
f_{\lambda}(y) = f(y) + \frac{1}{\lambda} \xi(y),
\end{displaymath}
for $y \in S$.  Then $f_{\lambda}(y) = p$ for all $y \in S$.
Let $X$ be any tangent vector to $S$ at any point $y \in S$.
Then $(f_{\lambda})_*X = 0$, since $f_{\lambda}$ is constant on $S$.
On the other hand,
\begin{displaymath}
(f_{\lambda})_*X = f_*X + \frac{1}{\lambda} \xi_*X,
\end{displaymath}
and $\xi_*X = D_X\xi = f_*(-A_{\xi}X) + \nabla_X^{\perp}\xi$.  
Therefore,
\begin{displaymath}
(f_{\lambda})_*X = f_*(X -\frac{1}{\lambda} A_{\xi}X) + 
\frac{1}{\lambda} \nabla_X^{\perp}\xi.
\end{displaymath}
Since $(f_{\lambda})_*X = 0$, we see that $A_{\xi}X = \lambda X$ and
$\nabla_X^{\perp}\xi = 0$.  Thus, $\xi$ is parallel along $S$
and $T_yS \subset T_{\lambda}(y)$.  Since $T_yS$ and $T_{\lambda}(y)$
have the same dimension, they are equal.  So $S$ is the
curvature surface through $y$ corresponding to the principal curvature $\lambda$, which is constant along $S$.

Now suppose that $\lambda =0$ is an eigenvalue of $A_{\xi}$ at $x$.  Let 
\begin{displaymath}
\sigma:{\bf R}^{n+1} - \{ q \} \rightarrow {\bf R}^{n+1} - \{ q \}
\end{displaymath}
be an inversion in a sphere
centered at a point $q \in {\bf R}^n$ chosen so that $q \notin f(M)$, and so that the principal curvature $\mu$
of the embedding $\sigma f:M \rightarrow {\bf R}^n$ corresponding to $\lambda$ is not zero
(see \cite[pp. 20--22]{CR8}).  
Since the conformal transformation $\sigma$ preserves tautness, the transformation
$\sigma f$ is taut and the principal curvature $\mu \neq 0$, so the argument above shows that there
exists a curvature surface $V$ of $\sigma f$ through $x$ whose tangent space at $x$ is equal to $T_{\mu}$, and $\mu$ is constant along $V$.
Applying the inversion $\sigma$ again, we get a curvature surface $S = \sigma (V)$ corresponding to the principal curvature $\lambda$ of $f = \sigma^2 f$, 
and $\lambda$ is constant along $S$, as needed in part (b) of the theorem. This completes the proof.
\end{proof}

\begin{remark}{On the relationship between taut and ``semi-Dupin''}
\label{rem:semi-Dupin}

\noindent
{\rm In the book of Cecil and Ryan \cite[p. 189]{CR7}, a Dupin (but not necessarily proper Dupin) hypersurface 
which satisfies Condition (b) in Theorem \ref{thm:taut-implies-semi-Dupin} was called ``semi-Dupin.''  Conjecture 6.19 \cite[p. 190]{CR7}
 in that same book stated that the notions of taut and semi-Dupin are equivalent for compact embedded
 hypersurfaces in ${\bf R}^n$.
Theorem \ref{thm:taut-implies-semi-Dupin}
gives an affirmative answer in one direction to Conjecture 6.19, that is, taut implies semi-Dupin for a compact, connected submanifold of ${\bf R}^n$.}
\end{remark}

\begin{remark}{Taut embeddings into complete Riemannian manifolds}
\label{rem:TTW}

\noindent
{\rm Using different approaches, Grove and Halperin \cite{GH-91}, and independently, Terng and Thorbergsson 
\cite{TTh1}, extended the notion of tautness to properly embedded
submanifolds of complete Riemannian manifolds.  
Specifically, a submanifold $M$ of a complete Riemannian manifold $N$ is said to be taut if there exists a field ${\bf F}$ such that each energy functional,
\begin{equation}
\label{eq:energy}
E_p (\gamma) = \int_{0}^{1} |\gamma'(t)|^2 dt,
\end{equation}
on the space $\mathcal P (N,M \times p)$ of $H^1$-paths $\gamma :[0,1] \rightarrow N$ from $M$ to a fixed point $p \in N$ is a perfect Morse function with respect to ${\bf F}$, if $p$ is not a
focal point of $M$. (Here a path is $H^1$ if it is absolutely continuous and the length of its derivative is square integrable.)

This definition can be shown to agree with the usual definition of tautness for submanifolds of Euclidean space.
Terng and Thorbergsson \cite{TTh1} showed that many of the important properties of taut embeddings
into Euclidean space have natural analogues in this more general setting. 

In a subsequent paper, Wiesendorf \cite{Wiesendorf} 
proved that a compact, connected submanifold $M$ embedded in a complete Riemannian manifold $N$ is taut if and only if for each point $p$ in $N$, the pre-image of $p$ under the normal exponential
map  of $M$ is a union of submanifolds.  As noted in Remark \ref{ozawa-critical-set} above,
part (a) of Ozawa's Theorem \ref{thm:Ozawa} implies that
for a taut compact, connected submanifold of ${\bf R}^n$,
the pre-image of $p$ under the normal exponential
map  of $M$ is a union of submanifolds. 

Wiesendorf also proved that if $M$ is taut with respect to any field ${\bf F}$, then $M$ is also taut with respect to 
${\bf Z}_2$.  In addition, Wiesendorf proved several results concerning singular Riemannian foliations, 
all of whose leaves are taut (see also Lytchak \cite{Lytchak-09}--\cite{Lytchak-10},  
Lytchak and Thorbergsson \cite{LT}--\cite{LT-10}).

In the context of taut submanifolds of complete Riemannian manifolds, Taylor \cite{Taylor-2000} gave a classification of immersions of $S^{n-1}$ into a complete
Riemannian manifold $N^n$ which have odd order in homotopy and are taut. (See also Hebda \cite{Heb1}--\cite{Heb1a},  Kahn \cite{Kahn},
and Ruberman \cite{Ruberman} for related results.)} 
\end{remark}

\section{Submanifolds in Lie Sphere Geometry}
\label{sec:submanifolds-lie-geometry}
 
In this section,
we give a brief description of the method for studying submanifolds
of Euclidean space ${\bf R}^n$ and the sphere $S^n$ using Lie sphere geometry (see
\cite{Cec9}, \cite{CC1} or \cite{P2} for more detail).  
As we noted
earlier, the Dupin property is invariant under stereographic
projection between ${\bf R}^n$ and $S^n$ (see \cite[pp. 147--148]{CR7}).
At times, it is simpler to work in $S^n$, and we will give our
description in those terms here. 

Let ${\bf R}_2^{n+3}$ be a real vector space of
dimension $n+3$ endowed with a metric of signature $(n+1,2)$,
\begin{equation}
\label{eq:lie-metric}
\langle x,y \rangle = -x_1 y_1 + x_2 y_2 + \cdots +x_{n+2} y_{n+2} -
x_{n+3} y_{n+3}.
\end{equation}
Let $e_1, \ldots ,e_{n+3}$ denote the standard orthonormal
basis with respect to this metric, with $e_1$ and $e_{n+3}$
timelike.  Let ${\bf RP}^{n+2}$ be the real projective space of lines through
the origin in ${\bf R}_2^{n+3}$,  and let $Q^{n+1}$ be the quadric
hypersurface of ${\bf RP}^{n+2}$ determined by the equation $\langle x,x \rangle = 0$.  This
hypersurface is called the {\em Lie quadric}.
We consider $S^n$ to be the unit sphere in the Euclidean space
${\bf R}^{n+1}$ spanned by the vectors $e_2, \ldots , e_{n+2}$.

The points in $Q^{n+1}$ are in bijective correspondence
with the set of all oriented hyperspheres and point
spheres in $S^n$.  Specifically,
the oriented hypersphere $S$ with center $p \in S^n$ and
signed radius $\rho$ corresponds to the point
\begin{equation}
\label{eq:lie-sphere-in-sphere}
[(\cos \rho , p, \sin \rho )]
\end{equation} in $Q^{n+1}$, where the 
square brackets denote the point in ${\bf RP}^{n+2}$ given by the
homogeneous coordinates within the round brackets.  The point
spheres in $S^n$ correspond to those points with $\rho = 0$.

The orientation of a sphere $S$ in $S^n$ is
determined by a choice of unit normal field to $S$ in $S^n$.  Geometrically, we take the positive radius $\rho$
in \eqref{eq:lie-sphere-in-sphere} 
to correspond to the field of unit normals which are tangent vectors to geodesics from $p$ to $-p$,
and the negative radius corresponds to the opposite orientation.
Each oriented sphere can be considered in two ways, with center $p$ and signed radius $\rho, - \pi < \rho < \pi$,
or with center $-p$ and the appropriate signed radius $\rho \pm \pi$.  Point spheres do not have an orientation.

Due to the signature of the metric $\langle \ ,\ \rangle$ given in equation \eqref{eq:lie-metric},
the Lie quadric contains projective lines, but no linear
subspaces of ${\bf RP}^{n+2}$ of higher dimension.  The line
$[x,y]$ determined by two points $[x]$ and $[y]$ of $Q^{n+1}$
lies on the quadric if and only if $\langle x,y \rangle =0$.  In terms of the
geometry of $S^n$,
this means that the two hyperspheres corresponding
to $[x]$ and $[y]$ are in oriented contact.  

The points
on a line on the quadric correspond to the parabolic pencil of oriented
hyperspheres in $S^n$ that are in oriented contact at a point $(p,\xi )$ in the
unit tangent bundle $T_1 S^n$ to $S^n$, where $\xi$ is a unit
tangent vector to $S^n$ at the point $p \in S^n$. 

Here, we can represent
$T_1S^n$ as the $(2n-1)$-dimensional submanifold of 
$S^n \times S^n \subset{\bf R}^{n+1} \times {\bf R}^{n+1}$ given by
\begin{equation}
\label{eq:3.1.1}
T_1S^n = \{(p, \xi) \mid \quad |p| =1, \quad |\xi| = 1, \quad p \cdot \xi = 0\},
\end{equation}
where $ p \cdot \xi$ denotes the Euclidean inner product of $p$ and $\xi$ in ${\bf R}^{n+1}$.

This leads to a
natural diffeomorphism from $T_1 S^n$ to the manifold
$\Lambda ^{2n-1}$ of projective lines on $Q^{n+1}$ given by
$(p,\xi) \mapsto[k_1, k_2]$, where 
\begin{displaymath}
k_1 = (1,p,0), \quad k_2 = (0,\xi,1).  
\end{displaymath}
In terms of the geometry of spheres in
$S^n$, the point $k_1$ corresponds to the point sphere in the parabolic
pencil, and $k_2$ corresponds to the great sphere in the pencil.
We will refer to 
the elements of $T_1 S^n$ as {\em contact elements}.

A {\em Lie sphere transformation} is a projective
transformation of ${\bf RP}^{n+2}$ which maps $Q^{n+1}$ to
itself.  In terms of the geometry of $S^n$, a Lie sphere
transformation maps oriented hyperspheres to
oriented hyperspheres.  Furthermore, a Lie
sphere transformation preserves oriented contact
of spheres, since it takes lines on $Q^{n+1}$ to lines on
$Q^{n+1}$.  

The group of Lie sphere transformations is
isomorphic to $O(n+1,2)/ \{ \pm I \}$, where $O(n+1,2)$
is the orthogonal group for the metric in \eqref{eq:lie-metric}.
A {\em M\"{o}bius transformation} is a Lie sphere transformation
which takes point spheres to point spheres.  As a transformation
on $S^n$ itself, a M\"{o}bius transformation is conformal.

For the spherical metric, there is a parallel transformation $P_t$ that adds $t$ to the signed 
radius of each sphere while keeping the center fixed.  As we saw in \eqref{eq:lie-sphere-in-sphere}, 
the sphere in $S^n$ with center
$p$ and signed radius $\rho$ is represented by the point $[(\cos \rho, p, \sin \rho)]$ in $Q^{n+1}$.
One easily checks that 
{\em spherical parallel transformation}
$P_t$ is accomplished by the 
transformation in $O(n+1,2)$ given by:
\begin{eqnarray}
\label{eq:2.5.3}
P_t e_1 & = & \cos t \ e_1 + \sin t \ e_{n+3}, \nonumber \\
P_t e_{n+3} & = & - \sin t \ e_1 + \cos t \ e_{n+3},\\
P_t e_i & = & e_i, \quad 2 \leq i \leq n+2. \nonumber
\end{eqnarray}
The Lie
sphere group is generated by M\"{o}bius transformations and 
parallel transformations $P_t$.  (See \cite[pp. 25--49]{Cec9}.)

The manifold  $\Lambda ^{2n-1}$ has a {\em contact structure},
i.e., a globally defined 1-form $\omega$ such that
$\omega \wedge d\omega ^{n-1}$ never vanishes on
$\Lambda ^{2n-1}$.  The condition $\omega = 0$ defines a
codimension one distribution $D$ on $\Lambda ^{2n-1}$, which has
integral submanifolds of dimension $n-1$, but none of higher
dimension.  A {\em Legendre submanifold} is one of these integral
submanifolds of maximal dimension, i.e.,
an immersion $\lambda : M^{n-1} \rightarrow
\Lambda ^{2n-1}$ such that $\lambda ^* \omega = 0$. (See \cite[pp. 51--64]{Cec9}.)\\

A Legendre submanifold is determined
by two functions $k_1 , k_2 $ from an $(n-1)$-dimensional
manifold $M$ to
${\bf R}^{n+3}_2$ satisfying the conditions:
\begin{enumerate}
\item[(L1)] For all $x \in M$ , the vectors $k_1 (x)$ and $k_2 (x)$
are linearly independent and 
$\langle k_i (x) , k_j (x) \rangle  = 0$, for $i,j = 1,2$.
\item[(L2)] There is no non-zero $X \in T_x M$, for any $x \in M$,
such that $dk_1 (X)$ and $dk_2 (X)$ are both in Span$\{ k_1 (x),
k_2 (x) \} $.
\item[(L3)] $\langle dk_1 (X), k_2 (x) \rangle = 0$, for all $X \in T_x M$,
for all $x \in M$.\\
\end{enumerate}
The Legendre submanifold is then defined by 
$\lambda (x) = [k_1 (x) , k_2 (x)]$.  Conditions (L1)--(L3) are
preserved if one reparametrizes by taking $\tilde{k} _1 =
\alpha k_1 + \beta k_2 $ and $\tilde{k} _2 = \gamma k_1 +
\delta k_2 $, where $\alpha , \beta , \gamma , \delta$ are smooth
real-valued functions on $M$ with $\alpha \delta -
\beta \gamma$ never equal to zero.

Condition (L1) means that $k_1$ and $k_2$ determine a line on the
quadric for each $x \in M$.  Condition (L2) means that 
$\lambda$ is an immersion, and Condition (L3) means that 
$\lambda ^* \omega = 0$. 

An immersion $f:M^{n-1} \rightarrow S^n$ with field of unit
normals $\xi :M^{n-1} \rightarrow S^n$ naturally induces
a Legendre submanifold $\lambda = [k_1, k_2],$ where 
\begin{equation}
\label{eq:legendre-parametrization}
k_1 = (1,f,0), \quad k_2 = (0,\xi ,1).
\end{equation}
For each $x \in M^{n-1}, [k_1(x)]$ is the point sphere in the
pencil of spheres in $S^n$ corresponding to $\lambda (x)$, and
$[k_2 (x)]$ is the great sphere in the pencil.  The Legendre submanifold  $\lambda$ is called the
{\em Legendre lift} of the oriented hypersurface $f$ with field of unit normals $\xi$.

An immersed submanifold $\phi :V \rightarrow S^n$ of codimension 
greater than one also induces a Legendre submanifold 
whose domain is the bundle $B^{n-1}$ of unit normal
vectors to $\phi (V)$.  The unit normal bundle $B^{n-1}$ can be considered to be the
submanifold of $V \times S^n$ given by
\begin{equation}
\label{eq:unit-normal-bundle}
B^{n-1} = \{ (x,\xi)\ | \ \phi(x) \cdot \xi = 0, \ d\phi(X) \cdot \xi = 0,\  \mbox{\rm for all }X \in T_xV\}.
\end{equation} 
The {\em Legendre lift of the submanifold}
$\phi$ is the map $\lambda: B^{n-1} \rightarrow \Lambda^{2n-1}$ defined by
\begin{equation}
\label{eq:3.3.2}
\lambda( x,\xi) = [k_1(x,\xi), k_2(x,\xi)],
\end{equation} 
where
\begin{equation}
\label{eq:3.3.3}
k_1(x,\xi) = (1, \phi(x), 0), \quad k_2(x,\xi) = (0, \xi, 1).
\end{equation} 
Geometrically, $\lambda(x,\xi)$ is the line on the quadric $Q^{n+1}$ corresponding to
the contact element $(\phi (x), \xi ) \in T_1S^n$. 
In this case,
the point sphere map, 
\begin{displaymath}
k_1 (x, \xi ) = (1, \phi (x), 0),
\end{displaymath}
has constant rank equal to the dimension of $V$.   Note that 
for a general Legendre submanifold $\lambda$,
the point sphere map does not have constant rank.    

A Lie sphere transformation $\beta$ maps lines on
$Q^{n+1}$ to lines on $Q^{n+1}$, so it naturally
induces a map $\tilde{\beta }$ from 
$\Lambda ^{2n-1}$ to itself.  If $\lambda$ is a
Legendre submanifold, then $\tilde{\beta }\lambda $
is also a Legendre submanifold, which is denoted
$\beta \lambda$ for short.  

These two Legendre
submanifolds, $\lambda$ and $\beta \lambda$, are said to be {\em Lie equivalent}.  
If $\beta$ is a M\"{o}bius transformation, then the two Legendre
submanifolds are said to be {\em M\"{o}bius equivalent}.
Finally, if $\beta$ is the parallel transformation $P_t$ and
$\lambda$ is the Legendre lift of an
oriented hypersurface $f:M \rightarrow S^n$ with field of unit normals $\xi$, then
$P_t\lambda$ is the Legendre lift of the
parallel hypersurface $f_{-t}$ (see \cite[p. 67]{Cec9}).

Suppose that $\lambda = [k_1 , k_2]$ is a Legendre submanifold.  Let
$x \in M$ and let $r$ and $s$ be real numbers at least one
of which is non-zero.  The sphere corresponding to the point
\begin{equation}
\label{eq:curvature-sphere}
[K] = [rk_1 (x) + sk_2 (x) ]
\end{equation}
is called a {\em curvature sphere} of $\lambda$ at $x$, if there
exists a non-zero $X \in T_x M$ such that
\begin{equation}
\label{eq:curv-sphere-derivative}
r dk_1 (X) + s dk_2 (X) \in {\rm Span} \{ k_1 (x),
k_2 (x) \} .
\end{equation}
This definition is invariant under a reparametrization of $\lambda$
by a pair $\{ \tilde{k} _1 , \tilde{k} _2 \}$.

To see the relationship between curvature spheres and principal
curvatures, suppose now that
$\lambda = [k_1 , k_2 ]$ as in \eqref{eq:legendre-parametrization}.  At a given
$x \in M$, we can write the distinct curvature spheres
in the form
\begin{equation}
\label{eq:distinct-curvature-spheres}
[K_i ] = [\mu _i k_1 + k_2 ], \quad 1\leq i \leq g .
\end{equation}
In the case where the map $f$ in \eqref{eq:legendre-parametrization} is an immersion, these
$\mu _i$ are the principal curvatures
of $f$ at $x$.  In terms of the geometry of $S^n$, the curvature
sphere at $x$ corresponding to a principal curvature $\mu _i$
is the oriented hypersphere in oriented contact with $f(M)$
at $f(x)$ and centered at the focal point determined by the
principal curvature $\mu _i$. (See \cite[pp. 64--82]{Cec9} for more detail.)

We refer to the $\mu_i$  in (14) as the {\em principal
curvatures} of $\lambda$.
These principal curvatures are not Lie
invariant, and they depend on the special parametrization \eqref{eq:legendre-parametrization}
for $\lambda$.  However, Miyaoka \cite{Mi2} proved that the
cross-ratio of any four of 
these principal curvatures is Lie invariant.

These cross-ratios are known as the {\em Lie curvatures} of $\lambda$, and they were 
discovered by Miyaoka \cite{Mi2}.
Such a cross-ratio is Lie invariant because it is equal to
the cross-ratio of the corresponding four curvature spheres
on the line $\lambda (x)$. Since a Lie sphere transformation
$\beta$ is a projective transformation, and it
maps the curvature spheres of $\lambda$ to the
curvature spheres of $\beta \lambda$, it preserves these
cross-ratios (see also, \cite[pp. 72--82]{Cec9}).

A Legendre submanifold $\lambda : M^{n-1} \rightarrow \Lambda ^{2n-1}$ is said to be
{\em Dupin} if along each curvature surface, the corresponding curvature
sphere map is constant, and a Dupin submanifold is said to be {\em proper Dupin}, if the number of
distinct curvature spheres is constant on $ M^{n-1}$.  Pinkall \cite{P2} showed
that both of these concepts are invariant under Lie sphere
transformations.  In the case where a Legendre submanifold is 
the Legendre lift of a submanifold of $S^n$ or  ${\bf R}^n$, these definitions agree with
those given in Section \ref{sec:dupin-submanifolds}.

A Dupin submanifold $\lambda : M^{n-1} \rightarrow \Lambda ^{2n-1}$ is called {\em reducible} if
it is locally Lie equivalent to a  Dupin submanifold obtained as a 
result of one of Pinkall's four constructions in Definition \ref{def:pinkalls-constructions}
(suitably generalized to the context of Lie sphere geometry,
see \cite[pp. 127--148]{Cec9}, \cite{CecGD}), 
and a Dupin submanifold of $S^n$ or  ${\bf R}^n$ is called reducible, if its Legendre
lift is reducible.  A Dupin submanifold that is not reducible is called {\em irreducible}.

Pinkall \cite{P2} found a useful formulation for reducibility
in terms of Lie sphere geometry as follows (see also \cite[pp. 127--148]{Cec9}).
\begin{theorem}
\label{thm:pinkall-reducibility}
Let $\lambda : M^{n-1} \rightarrow \Lambda ^{2n-1}$ be a proper
Dupin submanifold with distinct curvature spheres $K_1,
\ldots K_g$.  Then $\lambda$ is reducible if and only if
for some $i, 1 \leq i \leq g$,  the image of the
curvature sphere map $K_i$ is contained in an
$n$-dimensional linear subspace of ${\bf RP}^{n+2}$.
\end{theorem}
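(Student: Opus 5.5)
We prove the two directions separately. Throughout we use that, for a proper Dupin Legendre submanifold, each curvature sphere map $K_i$ is constant along the leaves of its principal foliation $T_i$. It therefore factors through the local space of leaves $M/T_i$, which has dimension $n-1-m_i$.

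\emph{Necessity.} Suppose $\lambda$ is reducible. Since the condition is Lie invariant (Lie sphere transformations are projective and carry curvature sphere maps to curvature sphere maps), we may assume $\lambda$ is obtained from a proper Dupin $W^{n-2}\subset{\bf R}^{n-1}$ by one of Pinkall's constructions. By Pinkall's remark that the cone construction is Lie equivalent to the tube construction, together with the Lie-geometric equivalence of the three remaining ones \cite[pp. 127--148]{Cec9}, it suffices to treat one model case: the tube construction in the spherical picture. Take $W\subset S^{n-1}\subset S^n$ with $S^{n-1}=S^n\cap e_{n+2}^{\perp}$, and let $M$ be a tube (or, in Lie sphere geometry, the Legendre lift of the unit normal bundle of $W$ in $S^n$). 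The new curvature sphere at a point over $x\in W$ is the sphere of oriented contact centered at the point $x$ of $W$ itself, i.e. $[(\cos\rho, x,\sin\rho)]$ with constant $\rho$. Every such vector lies in the subspace spanned by $e_1+\cot\rho\, e_{n+3}$-type combinations together with $e_2,\dots,e_{n+1}$, hence it is orthogonal to $e_{n+2}$ and to a fixed timelike combination of $e_1,e_{n+3}$. This is a codimension-two linear subspace of ${\bf R}^{n+3}_2$, i.e. a projective $n$-space. I will verify this by direct computation from \eqref{eq:lie-sphere-in-sphere}.

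\emph{Sufficiency.} Suppose the image of $K_i$ lies in a projective $n$-plane $E=[W]$, with $W\subset{\bf R}^{n+3}_2$ of dimension $n+1$. The plan is to split according to the signature of $W^\perp$, which is two-dimensional: it can be of type $(1,1)$, $(2,0)$, $(0,2)$, or degenerate. Since $K_i$ lies on the quadric, $W$ must contain null vectors, and this excludes the case where $W$ is definite on the relevant part. In each case we normalize $W^\perp$ by a Lie sphere transformation. For example, if $W^\perp$ is spanned by a spacelike and a timelike vector, we move them to $e_{n+2}$ and $e_1$ (or $e_{n+3}$). Then every sphere $K_i(x)$ is orthogonal to $e_{n+2}$ and to, say, $e_{n+3}$, so it is a point sphere lying in $S^{n-1}$. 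The leaf space of $T_i$ then maps into $S^{n-1}$, and $\lambda$ is recovered as the envelope of that family: $\lambda$ is the Legendre lift of a tube (or, in a Möbius-equivalent form, a cylinder or surface of revolution) over a submanifold of $S^{n-1}$. The remaining signatures lead analogously to the cylinder case (degenerate $W^\perp$, which corresponds to point spheres of ${\bf R}^n$ with a fixed improper point) and the surface of revolution case.

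The main obstacle is this sufficiency step, in two places. First, one must show that the other curvature spheres $K_j$, $j\ne i$, come from a Dupin submanifold of one lower dimension, i.e. that $\lambda$ is genuinely the construction applied to some $W$ and not merely has $K_i$ in a plane. For this I would reconstruct $W$ as the image of the focal map $K_i$ on $M/T_i$. Each $T_i$-leaf is then a great sphere of dimension $m_i$ in the normal directions orthogonal to $E$, and the remaining principal spaces descend to the leaf space. Second, the degenerate signature cases require care. I would handle them by choosing suitable Möbius normal forms, consulting \cite[pp. 127--148]{Cec9} for the computations.
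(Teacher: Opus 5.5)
The step that fails is your reduction of the necessity direction to the single model of the tube. The cylinder, surface-of-revolution and tube constructions are \emph{not} Lie equivalent to one another. Only the cone is absorbed into the tube, which is why one keeps three constructions. Lie sphere transformations come from $O(n+1,2)$, so the type of the line $\ell=E^\perp$ is a Lie invariant (for generic $W$ the new curvature sphere spans $E$), and this type differs in the three cases:
\begin{itemize}
\item For the cylinder $W\times{\bf R}$, in Euclidean Lie coordinates, the new curvature spheres are the tangent hyperplanes $[(h,-h,N,1)]$ with $N\perp e_n$. These are orthogonal to the improper point $[(1,-1,0,0)]$ and to the spacelike vector $(0,0,e_n,0)$, so $\ell$ is tangent to $Q^{n+1}$.
\item For the surface of revolution about ${\bf R}^{n-2}$, the new curvature spheres are centred on the axis and are orthogonal to $(0,0,e_{n-1},0)$ and $(0,0,e_n,0)$, so $\ell$ is spacelike.
\item For the tube, by your own computation, $\ell$ is spanned by $e_{n+2}$ and the timelike vector $-\sin\rho\,e_1+\cos\rho\,e_{n+3}$, so it is Lorentzian.
\end{itemize}
Your own sufficiency case split assigns these three types to three different constructions, which contradicts the claimed equivalence. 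For the same reason, the parenthetical ``(or, in a M\"obius-equivalent form, a cylinder or surface of revolution)'' in your $(1,1)$ case is wrong: that case gives the tube (or cone) only. The repair is simply to verify each of the three constructions directly, as above. Your sufficiency architecture (classify $\ell=E^\perp$, normalize it by a Lie sphere transformation, identify the construction) is the standard one.

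Several further points in the sufficiency direction need fixing.
\begin{itemize}
\item ``Degenerate'' splits into three subcases. If $W^\perp=[N,S]$ with $N$ null and $S$ spacelike, you get the cylinder. If the non-null direction is timelike, then $W/[N]$ is positive definite and $K_i\equiv[N]$. If $W^\perp$ is a line on $Q^{n+1}$, every null vector of $W$ lies on that line; since $dK_i(X)\perp\lambda(x)$ and there are no totally null $3$-planes, $K_i$ then has rank $0$. The last two subcases must be excluded (they contradict $g\ge 2$), not treated as cylinders.
\item In the Lorentzian case, the mechanism is that $K_i$ is a point-sphere map of constant rank $n-1-m_i$. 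Hence $\lambda$ is locally the Legendre lift of the unit normal bundle of $V=K_i(M)\subset S^{n-1}\subset S^n$. The $T_i$-leaves are the fibres $S^{m_i}$ of this bundle; they contain $\pm e_{n+2}$ and are not ``orthogonal to $E$''.
\item Still in that case, the identity $A_{\cos\theta\,\eta+\sin\theta\,e_{n+2}}=\cos\theta\,A_\eta$ shows that the remaining curvature spheres come from the Legendre lift of $V$ in $S^{n-1}$, which is therefore Dupin. For $m_i>1$ this lift is not that of a hypersurface, and the new direction merges with its point-sphere curvature sphere.
\item In the spacelike case $K_i$ is not a point-sphere map, so ``analogously'' does not apply. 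A uniform argument for all three cases uses the one-parameter group of Lie transformations fixing $E$ pointwise: rotations about the axis, translations along $e_n$, or the $O(1,1)$ of $\ell$. This group fixes each $K_i(x)$, and hence fixes the $m_i$-sphere of contact elements enveloping $K_i$ at $x$. So $\lambda$ is locally invariant under the group and is swept out from a Legendre submanifold of one lower dimension.
\end{itemize}
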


One can obtain a reducible compact proper Dupin hypersurface in ${\bf R}^3$ with two principal curvatures 
by revolving a circle $C$ in ${\bf R}^3$ about an axis ${\bf R}^1 \subset {\bf R}^3$ that is disjoint from $C$ 
to obtain a torus of revolution.
However, as pointed out in Remark \ref{rem:pinkall-constructions-compactness},
there are often problems in trying to construct a compact proper Dupin hypersurface
by applying one of Pinkall's constructions to a lower dimensional Dupin submanifold.  In fact,
Cecil, Chi and Jensen \cite{CCJ2} (see also \cite[pp. 146--147]{Cec9}) proved the following result.

\begin{theorem}
\label{thm:CCJ-2007}
(Cecil-Chi-Jensen, 2007)
If $M^{n-1} \subset {\bf R}^n$ is a compact, connected proper Dupin hypersurface with $g \geq 3$
principal curvatures, then $M^{n-1}$ is irreducible.
\end{theorem}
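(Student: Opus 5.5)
We argue by contradiction, using Pinkall's criterion (Theorem \ref{thm:pinkall-reducibility}) together with the tautness of compact proper Dupin hypersurfaces. Suppose $M^{n-1}\subset{\bf R}^n$ is compact, connected, proper Dupin with $g\geq 3$ and reducible. Since reducibility is local, the criterion only applies on some open subset $U\subset M$, where some curvature sphere map $K_i$ has image in an $n$-dimensional linear subspace $E\subset{\bf RP}^{n+2}$. The first step is to globalize this. By Theorem \ref{thm:Dupin-algebraic} (and the analyticity it gives), $M$ is analytic. Because $g$ is constant on $M$, each $K_i$ is a globally defined analytic map on $M$, or on a double cover if the principal curvatures are only locally ordered. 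The condition that $K_i(M)$ lies in the span of $K_i(U)$ is then an analytic condition, so it propagates to all of $M$. Hence $K_i(M)\subset E$, and $E^\perp$ is a $2$-plane $\Pi$ in ${\bf R}^{n+3}_2$ orthogonal to every curvature sphere of the $i$-th family.

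Next I would classify the signature of $\Pi$. It is definite, Lorentzian, or degenerate, and the three cases correspond, after a Lie sphere transformation, to the tube (equivalently cone), surface of revolution, and cylinder constructions. Lie sphere transformations preserve proper Dupin and, by Remark \ref{rem:tautness-Lie-invariant}, tautness. Working in a normal form adapted to $\Pi$, the $i$-th curvature spheres are orthogonal to a fixed sphere or contain a fixed axis or direction. Concretely, $M$ is then obtained globally from a lower-dimensional Dupin object $W$ by rotation about an axis ${\bf R}^{n-2}$, by a tube over $W$, or by a cylinder over $W$. In each case the $i$-th curvature surfaces are the fibers of the construction, so they are circles or spheres determined by $W$.

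The contradiction then comes from compactness, as in Remark \ref{rem:pinkall-constructions-compactness}. In the cylinder case, a compact $M$ cannot contain a ruling of lines, so this case is excluded directly (after a Möbius transformation). In the surface of revolution case, compactness of $M$ forces the profile $W$ to be compact in a half-space and avoid the axis. One must also rule out the focal points of the other principal curvatures $\mu_j$, $j\neq i$, of $W$ ever hitting the axis. I would argue this using the fact that $M$ is taut (Theorem \ref{thm:taut-codimension-greater-than-1}), so that $W$ is a compact proper Dupin hypersurface with $g-1\geq 2$ principal curvatures in the half-space model. A compact hypersurface with at least two principal curvatures must have some focal point on the axis: consider the height function along the axis-orthogonal direction, or an extremal point of $W$ relative to the axis. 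At that point $M$ loses properness, giving a contradiction. In the tube case, the points $W\times\{\pm\epsilon\}$ have only two principal curvatures, which contradicts $g\geq 3$ unless $W$ is umbilic; but then $g\leq 2$.

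The main obstacle is the surface of revolution case. The key fact to establish is that for a compact profile with $g-1\geq 2$ curvature spheres, some curvature sphere of the profile must be orthogonal to, i.e. centered on, the axis. I would try to prove this by restricting the Lie-geometric condition to the hyperbolic-space model ${\bf R}^{n-1}_+\cong H^{n-1}$, where revolution corresponds to a hypersurface of $H^{n-1}$. A compact Dupin hypersurface in $H^{n-1}$ has an outermost point relative to a horosphere or distance sphere, where all principal curvatures exceed $1$ in absolute value; combined with the behaviour of the other focal sets, this should force a coincidence of curvature spheres with $K_i$, violating properness.
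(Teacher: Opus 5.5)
Your overall frame (globalizing Pinkall's criterion by analyticity, then splitting by the signature of $\Pi=E^\perp$) is sound. Your tube case also goes through: in that normal form $K_i$ is the point sphere map of a compact $V\subset S^{n-1}$, and at the contact elements normal to the great sphere $S^{n-1}$ only two curvature spheres remain. (The correct matching is positive definite $\leftrightarrow$ revolution, Lorentzian $\leftrightarrow$ tube/cone, degenerate $\leftrightarrow$ cylinder. Negative definite cannot occur, since $E$ would then contain no null vectors.)

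The gap is in the revolution case, at exactly the step you flagged. The lemma you state, that a compact hypersurface with at least two principal curvatures must have a focal point on the axis, is false. A small ellipsoid with distinct but comparable semi-axes, placed far from the axis, has all of its focal points close to itself. So no height-function or extremal-point argument can produce the coincidence. Such a point only tells you that, there, all principal curvatures lie on one side of the new one; in your hyperbolic picture they are all $>\coth r_{\max}>1$ at the point farthest from a fixed point. That is perfectly consistent with properness. What is true needs the Dupin hypothesis and global topology. The complement of the axis is conformal to $H^k\times S^m$, so properness of $M$ says exactly that no hyperbolic principal curvature of $W$ ever vanishes. The smallest one is positive at the farthest point, hence positive everywhere. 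So $W$ is convex and therefore a sphere. But a compact proper Dupin sphere is taut (Theorem~\ref{thm:proper-Dupin-implies-taut}) and hence round (Theorem~\ref{thm:carter-west-taut-spheres}, transported conformally), contradicting $g-1\geq 2$. This last, topological step is the missing idea.

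There is a second problem: you treat the Lie-transformed object as if it were still an embedded compact hypersurface. The transformation putting $\Pi$ in normal form is in general not a M\"obius transformation, so the point sphere map of the transformed Legendre submanifold may be singular. A profile front crossing the axis, or a cylinder $W\times{\bf R}$ closed up through $\infty$ to $W\times S^1$, are perfectly good compact Legendre submanifolds. Hence ``compactness forces the profile to avoid the axis'' (which the argument above needs) and ``a compact $M$ cannot contain a ruling of lines'' are not justified. You would have to show that the number of curvature spheres drops at the singular points; for the closed-up cylinder, all the old curvature spheres collapse to the point sphere at $\infty$.

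The paper sidesteps all of this with a Lie-invariant topological count. By Theorem~\ref{thm:dupin-restriction-g}, the sum of the ${\bf Z}_2$-Betti numbers of $M$ is $2g$. A compact proper Dupin hypersurface produced by one of Pinkall's constructions is, as a manifold, a sphere bundle $W\times S^m$ over a compact $W$ carrying $g-1$ curvature spheres, so its Betti sum is $2\cdot 2(g-1)$. Then $2g=4(g-1)$ forces $g=2$. Because this involves only the domain manifold, it is blind to singularities of the point sphere map and handles all three cases at once.
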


The proof is accomplished by comparing the sum of the  ${\bf Z}_2$-Betti numbers 
of a compact proper Dupin hypersurface, as found in Theorem \ref{thm:dupin-restriction-g}, 
with the sum of
the  ${\bf Z}_2$-Betti numbers of a compact proper Dupin hypersurface obtained by one of Pinkall's constructions.

\begin{remark}{Using irreducibility to study compact Dupin hypersurfaces}
\label{rem:irreducibility}

\noindent
{\rm From Theorem \ref{thm:CCJ-2007}, we see that 
one approach to obtaining classifications of compact proper Dupin hypersurfaces with more
than two principal curvatures is by assuming that the hypersurface is irreducible, and 
then working locally in the context of Lie sphere geometry, using the method of moving frames.  This approach has been used successfully in the papers of 
Pinkall \cite{P1}, \cite{P3}, Cecil and Chern \cite{CC2}, Cecil and Jensen \cite{CJ2}--\cite{CJ3}, 
and Cecil, Chi and Jensen \cite{CCJ2}, and  we will discuss it in more detail in Section \ref{sec:compact-proper-dupin}
(see also \cite[pp. 168--190]{Cec9}).}
\end{remark}

Another important question in classifying proper Dupin
submanifolds is whether the Dupin submanifold is Lie equivalent to the Legendre lift of
an isoparametric hypersurface in $S^n$.  This condition
also has a natural formulation in Lie sphere geometry given in Theorem \ref{thm:lie-eq-isop-hyp} below
(see Cecil  \cite{CecKod} or \cite[p. 77]{Cec9}).

Recall
that a line in ${\bf RP}^{n+2}$ is said to be {\em timelike} if
it contains only timelike points.  This means that that
an orthonormal basis for the 2-plane in  ${\bf R}_2^{n+3}$,
determined by the timelike line, consists of two timelike
vectors.  An example is the line $[e_1,e_{n+3}]$.

\begin{theorem}
\label{thm:lie-eq-isop-hyp}
Let $\lambda : M^{n-1} \rightarrow \Lambda ^{2n-1}$ be
a Legendre submanifold with $g$ distinct curvature spheres
$K_1,\ldots,K_g$ at each point.  Then $\lambda$ is
Lie equivalent to the Legendre lift of an
isoparametric hypersurface in $S^n$ if and only if there
exist $g$ points $P_1,\ldots,P_g$ on a timelike line in
${\bf RP}^{n+2}$ such that 
\begin{equation}
\label{eq:lie-geometric-criterion-isop-hyp}
\langle K_i,P_i \rangle = 0, \quad 1 \leq i \leq g.
\end{equation}
\end{theorem}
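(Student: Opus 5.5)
Both directions of the criterion will be proved by reducing to the case where the timelike line is $[e_1,e_{n+3}]$, which we may do because $O(n+1,2)$ acts transitively on timelike 2-planes in ${\bf R}^{n+3}_2$. The condition \eqref{eq:lie-geometric-criterion-isop-hyp} is Lie invariant: if $\beta$ is a Lie sphere transformation, the curvature spheres of $\beta\lambda$ are $\beta K_i$. The points $\beta P_i$ lie on the timelike line $\beta$ applied to the original one, and $\langle \beta K_i,\beta P_i\rangle=\langle K_i,P_i\rangle$. So it suffices to check the criterion for Legendre lifts of isoparametric hypersurfaces, and to show that the criterion with $P_i$ on $[e_1,e_{n+3}]$ forces $\lambda$ to be Lie equivalent to such a lift.

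\emph{Necessity.} Let $\lambda$ be the Legendre lift of an isoparametric hypersurface $f:M\to S^n$ with principal curvatures $\cot\theta_i$ (constant). With $k_1=(1,f,0)$ and $k_2=(0,\xi,1)$, the curvature sphere for $\mu_i=\cot\theta_i$ is $[\cos\theta_i\,k_1+\sin\theta_i\,k_2]=[(\cos\theta_i, \cos\theta_i f+\sin\theta_i\xi,\sin\theta_i)]$. This is the oriented sphere centered at the focal point with radius $\theta_i$, in the form \eqref{eq:lie-sphere-in-sphere}. Take $P_i=[(\sin\theta_i,0,-\cos\theta_i)]$ on the line $[e_1,e_{n+3}]$. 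Then
\[
\langle K_i,P_i\rangle=-\cos\theta_i\sin\theta_i+\sin\theta_i\cos\theta_i=0 .
\]
This holds identically because $\theta_i$ is constant on $M$.

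\emph{Sufficiency.} Assume the $P_i$ lie on $[e_1,e_{n+3}]$. The key observation is that $\langle K,(\sin\theta,0,-\cos\theta)\rangle=0$ for $K=(\cos\rho,p,\sin\rho)$ means $\sin(\theta-\rho)=0$. So each $K_i$ has signed radius $\rho\equiv\theta_i$ mod $\pi$, a constant. Next, apply a parallel transformation $P_t$, which preserves $[e_1,e_{n+3}]$ and just shifts the $\theta_i$. Choose $t$ so that no $\theta_i-t$ is $\equiv 0$ mod $\pi$. The transformed Legendre submanifold then has no point-sphere among its curvature spheres. Since the number $g$ of curvature spheres is constant and the point sphere $[k_1]$ varies continuously, $k_1$ has rank $n-1$ (this is the standard criterion from \cite{Cec9}). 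Hence $\lambda$ is the Legendre lift of an immersed hypersurface $f$ with unit normal $\xi$. For this lift the curvature spheres are $[\cos\theta_i k_1+\sin\theta_i k_2]$ with constant $\theta_i$, so the principal curvatures $\cot\theta_i$ are constant, and $f$ is isoparametric.

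The main obstacle I expect is this last step: justifying that, after the parallel transformation, $\lambda$ really is an immersed hypersurface lift rather than a degenerate Legendre submanifold. I would handle it via the rank condition on $k_1$, using (L2): if $dk_1(X)\in{\rm Span}\{k_1,k_2\}$ for some $X\neq 0$, then $[k_1]$ would be a curvature sphere, contradicting the choice of $t$. A secondary point is to use $g$ constant and smoothness of the curvature spheres, so that each $\theta_i$ mod $\pi$ is well defined and constant on the connected manifold. This follows because the radius is locally constant, taking values in a discrete set determined by the fixed $P_i$.
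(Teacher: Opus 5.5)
Your proof is correct and is essentially the standard argument the paper cites rather than reproduces (Cecil \cite{CecKod}, \cite[p.~77]{Cec9}). For necessity it uses the same points $P_i=\sin\theta_i\,e_1-\cos\theta_i\,e_{n+3}$ together with Lie invariance; for sufficiency it moves the timelike line to $[e_1,e_{n+3}]$, reads off constant radii, and applies a parallel transformation $P_t$ so that no curvature sphere is a point sphere. One tidy-up: the point sphere map is an immersion simply because $[k_1]$ is never a curvature sphere (any $X\neq 0$ with $df(X)=0$ gives $dk_1(X)=0$), as your last paragraph says, so constancy of $g$, continuity, and the discreteness argument are not needed, since the fixed $P_i$ already determine each radius mod $\pi$.
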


For more general considerations of Lie contact structures on manifolds, see Miyaoka \cite{Mi4}--\cite{Mi7}.

\section{Compact Proper Dupin Submanifolds}
\label{sec:compact-proper-dupin}

In this section, we consider compact proper Dupin
submanifolds embedded in the sphere $S^n$ or in Euclidean space ${\bf R}^n$.  Since a tube
of sufficiently small radius $\epsilon$ over a compact
proper Dupin submanifold of codimension greater than one is
a compact proper Dupin hypersurface, we will 
for the most part restrict our
attention to the codimension one case. (See also \cite{Cec-compact-dupin}, \cite{Cec-classifications-dupin},
\cite[pp. 308--322]{CR8} 
for further descriptions of the results contained in this section.)

Let $M$ be a compact
proper Dupin hypersurface embedded in $S^n$ with $g$ distinct
principal curvatures.  
As noted in Theorem 4.3, the number $g$
must be $1,2,3,4$ or 6.  Of course, in the case $g=1$, the
hypersurface $M$ is totally umbilic and must be a great or small
hypersphere in $S^n$.  The case $g=2$ was handled in 1978
by Cecil and Ryan \cite{CRMA} who proved the following.
\begin{theorem}
\label{thm:C-R-math-ann}
A compact, connected proper Dupin hypersurface embedded in $S^n$
with two distinct principal curvatures is M\"{o}bius equivalent to
an isoparametric hypersurface, i.e., a standard product of two spheres.
\end{theorem}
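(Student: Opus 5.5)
The plan is to use the Lie-geometric criterion of Theorem \ref{thm:lie-eq-isop-hyp}, with an actual Möbius transformation produced at the end. Let $M \subset S^n$ be compact, connected and proper Dupin with $g=2$. Let its principal curvatures $\mu_1,\mu_2$ have multiplicities $m_1,m_2$, with $m_1+m_2=n-1$.

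First, each principal foliation $T_{\mu_i}$ has leaves that are open subsets of $m_i$-dimensional round spheres. Since $M$ is compact and the multiplicities are constant, I expect each leaf to be a complete, compact round $m_i$-sphere. To see this, note that the curvature sphere $[K_i]$ is constant along a leaf, so the leaf lies in the intersection of $M$ with the fixed curvature sphere $K_i$. A closedness argument using compactness of $M$ and the constant multiplicity $m_i$ then shows the leaf closes up to the full $m_i$-sphere.

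Next, by the Dupin condition, $K_i$ factors through the leaf space $M/T_{\mu_i}$, which has dimension $n-1-m_i = m_j$ for $j \neq i$. I would show that the image of $K_1$ lies in a linear subspace of ${\bf RP}^{n+2}$ of dimension $m_2+1$, and that the image of $K_2$ lies in one of dimension $m_1+1$. The tool is the structure equations in a Lie frame. For $g=2$, the derivative of $K_1$ in the $T_{\mu_2}$ directions stays in the span of $K_1$, $K_2$ and the $T_{\mu_2}$-frame vectors. Moreover, $dK_2$ along $T_{\mu_1}$ is controlled in the same way. Because there are only two curvature spheres, the spaces $E_1=\mathrm{Span}\{K_1, dK_1(T_{\mu_2})\}$ and $E_2=\mathrm{Span}\{K_2, dK_2(T_{\mu_1})\}$ are mutually orthogonal and together span ${\bf R}^{n+3}_2$. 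The step I expect to be the main obstacle is showing that $E_1$ and $E_2$ are constant on $M$. The natural route is to differentiate, use the Dupin condition $dK_i \equiv 0 \bmod K_i$ along $T_{\mu_i}$, and then use the fact that $K_j$ is orthogonal to all of $E_i$. If the local computation is awkward, a fallback is the direct geometric route: check that the focal submanifold $f_{\mu_1}(M)$ is an $m_2$-dimensional submanifold all of whose normal spheres have constant radius. Then use the complete round leaves to show that it lies in a fixed $m_2$-sphere, and similarly for $f_{\mu_2}(M)$.

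Given constant orthogonal subspaces $E_1 \oplus E_2 = {\bf R}^{n+3}_2$, the signature $(n+1,2)$ splits between them. Since each $E_i$ contains the null curve-family of curvature spheres on a quadric of positive dimension, each $E_i$ must have signature $(\dim E_i - 1, 1)$. Hence $E_1^\perp$ and $E_2^\perp$ each contain a timelike vector, say $P_1 \in E_2$ and $P_2 \in E_1$. These lie in a timelike line, so that $\langle K_i,P_i\rangle=0$. Theorem \ref{thm:lie-eq-isop-hyp} then gives Lie equivalence to an isoparametric hypersurface. To upgrade this to Möbius equivalence, I would choose the timelike line $[P_1,P_2]$ and note that the point-sphere vector $e_1$ can be arranged to lie in that plane. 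The reason is that $e_1$ can be decomposed into components in $E_1$ and $E_2$; after a Möbius transformation these components become the centers of the two focal spheres. Normalizing by a Möbius transformation so that the two focal submanifolds are great-sphere-orthogonal round spheres $S^{m_2}$ and $S^{m_1}$ in complementary subspaces, $M$ becomes a tube over one of them, which is the standard product $S^{m_1}(r)\times S^{m_2}(s)$ with $r^2+s^2=1$.
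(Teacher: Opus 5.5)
There is a genuine gap at the central step: the subspaces $E_1=\mathrm{Span}\{K_1,dK_1(T_{\mu_2})\}$ and $E_2=\mathrm{Span}\{K_2,dK_2(T_{\mu_1})\}$ are neither constant nor complementary.

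\textbf{Why $E_1,E_2$ fail.} We have $\langle K_1,K_1\rangle\equiv 0$, and $\langle dK_1,K_2\rangle\equiv 0$ by condition (L3) for the parametrization $\lambda=[K_1,K_2]$; the same holds for $K_2$. Hence both $E_i$ lie in $\lambda(x)^{\perp}$. That space has dimension $n+1<n+3$, and it is positive semidefinite with radical $\lambda(x)$. In fact $\dim E_1+\dim E_2=(m_2+1)+(m_1+1)=n+1$, and $E_1\oplus E_2=\lambda(x)^{\perp}$, which moves with $x$. Each $E_i$ is degenerate, since $K_i$ lies in its radical. It is the tangent space, along the ray through $K_i$, of the cone over the image of $[K_i]$, so it turns as $[K_i]$ moves. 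It contains no timelike vectors, so your $P_1\in E_2$ and $P_2\in E_1$ do not exist, and the signature count collapses.

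For a concrete check, write ${\bf R}^{n+3}_2={\bf R}e_1\oplus{\bf R}^{p+1}\oplus{\bf R}^{q+1}\oplus{\bf R}e_{n+3}$. Take the Legendre submanifold $[(1,u,0,0),(0,0,v,1)]$ with $u\in S^p$, $v\in S^q$, which is parallel to the standard products. Then $E_1=\mathrm{Span}\{(1,u,0,0)\}\oplus\{(0,w,0,0):w\perp u\}$, whereas the invariant space is ${\bf R}e_1\oplus{\bf R}^{p+1}$.

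\textbf{What is missing.} You need the second-order direction. The right object is the $(m_2+2)$-dimensional space $W_1$ spanned by $K_1$, $dK_1(T_{\mu_2})$, and a null vector $Z_1$ with $\langle K_1,Z_1\rangle\neq 0$ coming from second derivatives of $K_1$. In a Lie frame, $Z_1$ is the frame vector dual to $K_1$, suitably chosen. $W_1$ has signature $(m_2+1,1)$, and $W_2=W_1^{\perp}$. Proving that $W_1$ is constant is the real content. You differentiate the curvature-sphere and Dupin conditions in a Lie frame adapted to $K_1,K_2$ and both principal distributions, then use Cartan's lemma to kill the connection forms mixing the two blocks; this is where $g=2$ is used. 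This is Pinkall's argument behind Theorem \ref{thm:pinkall-cyclides}.

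\textbf{The fallback fails too.} Focal submanifolds of a compact cyclide are in general not round spheres, because M\"{o}bius transformations preserve spheres but not their centers. Already a ring cyclide in ${\bf R}^3$ that is not a torus of revolution has a focal ellipse, as the paper notes. So ``it lies in a fixed $m_2$-sphere'' holds only after the normalization you are trying to prove.

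\textbf{The M\"{o}bius upgrade uses the wrong vector.} Point spheres are the $[x]$ with $\langle x,e_{n+3}\rangle=0$. So M\"{o}bius transformations are the Lie sphere transformations fixing $[e_{n+3}]$, and $e_1$ has no invariant meaning. You cannot place $e_{n+3}$ in a timelike plane of your choosing. The correct step is this:
\begin{enumerate}
\item Write $e_{n+3}=w_1+w_2$ with $w_i\in W_i$.
\item Note that $[K_1]:M\to Q(W_1)\cong S^{m_2}$ has rank $m_2$, so by compactness it is onto.
\item If $w_1$ were not timelike, $w_1^{\perp}\cap W_1$ would contain a nonzero null vector. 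Then some curvature sphere would be a point sphere, giving $df(X)=0$ for some $X\neq 0$, which contradicts that $f$ is an immersion. The same argument applies to $w_2$.
\item With $w_1,w_2$ timelike, a Lie transformation fixing $e_{n+3}$ carries $(W_1,W_2)$ to the splitting of a standard product $S^{m_1}(r)\times S^{m_2}(s)$.
\item Compactness and embeddedness then give all of it.
\end{enumerate}
Step 3 is where compactness is essential for M\"{o}bius rather than merely Lie equivalence. Compare the singular cyclides with $\rho\ge 1$ in Theorem \ref{thm:cyclides-moebius}: they are locally Lie equivalent, but not M\"{o}bius equivalent, to standard products.

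The paper itself gives no proof of this theorem and only cites Cecil--Ryan, whose argument uses compactness in a non-Lie-geometric way. Your overall strategy is Pinkall's Lie-geometric route, which works once these steps are repaired.
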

Next Miyaoka \cite{Mi1} handled the case $g=3$, where the full
Lie sphere group was needed to get equivalence with an
isoparametric hypersurface.
\begin{theorem}
\label{thm:miyaoka-1}
A compact, connected proper Dupin hypersurface embedded in $S^n$
with three distinct principal curvatures is Lie equivalent
to an isoparametric hypersurface.
\end{theorem}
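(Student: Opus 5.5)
The plan is to work in the Lie sphere framework and show that the criterion of Theorem \ref{thm:lie-eq-isop-hyp} holds: we look for three points $P_1,P_2,P_3$ on a timelike line with $\langle K_i,P_i\rangle=0$.

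\textbf{Topology.} By Theorem \ref{thm:dupin-restriction-g} and relation \eqref{eq:subscripts-dupin}, all three multiplicities are equal to a common value $m$. The hypersurface $M$ has sum of ${\bf Z}_2$-Betti numbers equal to $6$. By Theorem \ref{thm:dupin-ball-bundle}, $M$ divides $S^n$ into two ball bundles over the focal submanifolds $M_\pm$. This gives the homology of $M$ and of $M_\pm$ in the same form as for a Cartan hypersurface.

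\textbf{Global structure of the leaves.} For each $i$, the $i$-th principal foliation has compact leaves that are round $m$-spheres. Each focal map $f_{\mu_i}$ factors through the leaf space $M/T_{\mu_i}$, a compact $2m$-manifold. I would then use tautness (Theorem \ref{thm:proper-Dupin-implies-taut}) and Ozawa's Theorem \ref{thm:Ozawa} to show the following about the curvature sphere maps $K_i\colon M\to Q^{n+1}$, each of which is constant on the leaves of $T_{\mu_i}$:
\begin{itemize}
\item the three leaf spaces are manifolds with the homology of ${\bf FP}^2$;
\item the induced maps from these leaf spaces into $Q^{n+1}$ are immersions.
\end{itemize}

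\textbf{Local Lie-geometric analysis.} Here I would use moving frames adapted to the line $\lambda(x)=[K_1,K_2]$, with $K_3$ lying on that line. With only three curvature spheres there is no Lie curvature, since there is no cross-ratio of four points. So the structure equations should force strong rigidity: the derivatives $dK_i$ restricted to $T_{\mu_j}$ for $j\ne i$ satisfy algebraic relations through the Codazzi-type equations. The aim is to show that the linear spans $W_i=\mathrm{Span}\,K_i(M)$ are proper subspaces, mutually orthogonal in a suitable sense. Their orthogonal complements should then meet a common timelike line $\ell$, and the points $P_i=\ell\cap W_i^{\perp}$ give the criterion of Theorem \ref{thm:lie-eq-isop-hyp}.

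\textbf{Main obstacle.} The hard step is this last one: showing that the compactness information actually forces the existence of the timelike line. Locally it fails, since Pinkall's constructions (Theorem \ref{thm:4.1.1}) give $g=3$ Dupin hypersurfaces that are not isoparametric. So the argument must be genuinely global. I would first use Theorem \ref{thm:CCJ-2007} to get irreducibility, so that by Theorem \ref{thm:pinkall-reducibility} no $K_i(M)$ lies in an $n$-dimensional subspace. I would then try to show that some Lie-invariant function built from the third fundamental form along the leaves must be constant on the compact leaf spaces, using the maximum principle or a homological argument with the ${\bf FP}^2$-type topology. The existence of $\ell$ should follow from that constancy.
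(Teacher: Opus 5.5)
\textbf{There is a genuine gap.} Your preliminaries are fine: $m_k=m_{k+2}$ mod $3$ does force equal multiplicities, and invoking Theorem~\ref{thm:CCJ-2007} for irreducibility is exactly right. But nothing in the proposal actually produces the timelike line $\ell$.

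\begin{itemize}
\item The local paragraph only says what the structure equations ``should'' force.
\item The fallback in your last paragraph is a hope, not an argument. It asks to prove that an unspecified Lie-invariant function is constant on the leaf spaces, by a maximum principle or by the ${\bf FP}^2$-type homology. That function is never identified, and nothing explains why its constancy would give $\ell$.
\item The leaf-space claims (homology of ${\bf FP}^2$, immersions into $Q^{n+1}$) are only asserted, and they are never used.
\item Your intermediate target of ``mutually orthogonal'' spans is also not what happens. For an isoparametric hypersurface with $g=3$ one may take $P_i=[\sin\theta_i\, e_1-\cos\theta_i\, e_{n+3}]$, and then $\langle P_i,P_j\rangle=-\cos(\theta_i-\theta_j)\neq 0$.
\end{itemize}

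\textbf{Where the argument goes wrong.} The underlying error is your diagnosis that the proof ``must be genuinely global.'' Pinkall's constructions produce non-isoparametric local examples with $g=3$, but these are reducible by the very definition of reducibility. By Theorem~\ref{thm:CJ-3-p-c}, reducibility is the only local obstruction.

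So compactness enters only through irreducibility, which you already have from Theorem~\ref{thm:CCJ-2007}. The Cecil--Jensen local classification, Theorem~\ref{thm:CJ-3-p-c}, then finishes the proof: a connected irreducible proper Dupin hypersurface with three principal curvatures has $m_1=m_2=m_3$ and is Lie equivalent to an isoparametric hypersurface. Where Theorem~\ref{thm:miyaoka-1} is stated, the paper simply attributes it to Miyaoka. This two-step chain is what the paper's later results supply.

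\textbf{What replaces your constancy argument.} In the Cecil--Jensen proof, the job you wanted a global constancy theorem to do is done by the common length $\rho$ of the vectors $v_{p\alpha}$, $v_{a\alpha}$, $v_{ap}$ built from the functions $F^{\alpha}_{ap}$.
\begin{itemize}
\item If a row or column vanishes on an open set, that open set is reducible, so $\lambda$ is reducible by Theorem~\ref{prop:4.2.10a}. Hence irreducibility forces $\rho\neq 0$.
\item $\rho$ is then made constant by a choice of frame. This is a normalization, not a maximum principle.
\item The structure equations then exhibit $P_1,P_2,P_3$ on a timelike line, so Theorem~\ref{thm:lie-eq-isop-hyp} applies.
\end{itemize}
No leaf-space topology is needed beyond the Betti-number count that goes into Theorem~\ref{thm:CCJ-2007}.
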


Thus, at the time of the writing of the book \cite{CR7} of Cecil and Ryan in 1985, it was known that every compact, connected proper Dupin hypersurface 
$M \subset S^n$ (or ${\bf R}^n$) with
$g= 1,2$ or 3 principal curvatures is Lie equivalent to an isoparametric hypersurface in $S^n$.
At that time, every other known example
of a compact, connected proper Dupin hypersurface in $S^n$ 
was also Lie equivalent to an isoparametric hypersurface in $S^n$. 
This led to the following
conjecture by Cecil and Ryan \cite[p. 184]{CR7} (which we have rephrased slightly).

\begin{conjecture} 
\label{cecil-ryan} 
(Cecil-Ryan, 1985) Every compact, connected proper Dupin hypersurface $M \subset S^n$ $($or ${\bf R}^n)$
is Lie equivalent to an isoparametric hypersurface in $S^n$.
\end{conjecture}

In the case $g=4$, the conjecture remained unsolved
for several years until finally, in papers published in 1989, 
counterexamples to the conjecture were constructed 
independently by Pinkall and Thorbergsson \cite{PT1}
and Miyaoka and Ozawa \cite{MO}. These two constructions lead to different types of counterexamples
to the conjecture, and
the method of Miyaoka and Ozawa also 
yields counterexamples to the conjecture with $g=6$ principal curvatures.  

In both constructions, a fundamental
Lie invariant, the Lie curvature, which was discovered by Miyaoka \cite{Mi3},
was used to show that the examples are not Lie equivalent to
an isoparametric hypersurface.  Specifically, if $M$ is a proper Dupin hypersurface 
with four principal curvatures,
then the {\em Lie curvature} $\psi$ is defined to be
the cross-ratio of these principal curvatures.  If $M$ has six principal curvatures, then the Lie curvatures are the cross-ratios of the principal curvatures taken four at a time.

Viewed in the context of projective geometry, at each point $x \in M$, a Lie curvature
is the cross-ratio of four points along a projective line lying on $Q^{n+1}$ corresponding
to four curvature spheres of $M$ at $x$.  
Since a Lie sphere transformation maps curvature spheres to curvature
spheres and preserves cross-ratios, a Lie curvature is invariant under Lie sphere transformations
(see  \cite[pp. 72--82]{Cec9} for more detail).  

From the work of M\"{u}nzner \cite{Mu}--\cite{Mu2}, 
it is easy to show that in the case $g=4$, the 
Lie curvature $\psi$ has the constant value $1/2$ 
on an isoparametric hypersurface  (when the principal curvatures are appropriately ordered). For
the counterexamples to the conjecture, it was shown that $\psi \neq 1/2$ at some points, and therefore the examples cannot be Lie equivalent to an isoparametric hypersurface.   In fact, it can be shown that the Lie curvature $\psi$
is not constant on the counterexamples to the conjecture.

The examples of Pinkall and Thorbergsson  are obtained by 
taking certain deformations of the isoparametric hypersurfaces
of OT-FKM-type constructed by  Ozeki and Takeuchi \cite{OT}--\cite{OT2}, and by
Ferus, Karcher and M\"{u}nzner \cite{FKM}, using representations of Clifford algebras.  
Pinkall and Thorbergsson proved that their examples are not Lie
equivalent to an isoparametric hypersurface by showing that the Lie curvature does not have the
constant value $\psi = 1/2$, 
as required for a hypersurface with $g=4$ that is Lie equivalent 
to an isoparametric hypersurface.  Using their methods, one can also show directly
that the Lie curvature is not constant on their examples (see, for example,  \cite[pp. 309--314]{CR8}).

The construction of counterexamples to  Conjecture \ref{cecil-ryan} due to Miyaoka and Ozawa \cite{MO} (see also
\cite[pp. 117--123]{Cec9}) is based on the Hopf fibration
$h:S^7 \rightarrow S^4$.  
Miyaoka and Ozawa first showed that if $W^3$ is a taut
compact submanifold of $S^4$, then $M = h^{-1}(W^3)$ is a taut compact submanifold of
$S^7$.  

Using this and the fact that tautness is equivalent to proper Dupin
for a compact, connected hypersurface in $S^n$ on which the number $g$ of distinct principal 
curvatures is constant (Theorem \ref{thm:taut-dupin}),
they show that if $W^3$ is a proper Dupin hypersurface in
$S^4$ with $g$ distinct principal curvatures, then
$h^{-1}(W^3)$ is a proper Dupin hypersurface in $S^7$ with $2g$ principal
curvatures.  

To complete the argument, they show that if a compact, connected 
hypersurface $W^3 \subset S^4$ is proper Dupin
but not isoparametric, then the Lie curvatures of
$h^{-1}(W^3)$ are not constant, and therefore $h^{-1}(W^3)$ is not Lie
equivalent to an isoparametric hypersurface in $S^7$.  For $g=2$ or 3, this gives
a compact proper Dupin hypersurface $M = h^{-1}(W^3)$ in $S^7$ with $g=4$ or 6, respectively,
that is not Lie equivalent to an isoparametric hypersurface.  (See \cite[pp. 112--123]{Cec9}
or the paper \cite{Cec11} for a detailed description
of the examples of Pinkall-Thorbergsson and Miyaoka-Ozawa.)

As we have seen, all of the hypersurfaces described above are shown to be 
counterexamples to Conjecture \ref{cecil-ryan} by proving that they do not have
constant Lie curvatures.  This led to a revision of Conjecture \ref{cecil-ryan} by Cecil, Chi and Jensen
\cite[p. 52]{CCJ4} in 2007 that contains the additional assumption of constant Lie curvatures.
This revised conjecture
is still an open problem, although it  has been shown to be true in some cases,
which we will describe after stating the conjecture.

\begin{conjecture}
\label{revised-conjecture}
(Cecil-Chi-Jensen, 2007)
Every compact, connected proper Dupin hypersurface in $S^n$ with four or six principal curvatures
and constant Lie curvatures is Lie equivalent to an isoparametric hypersurface.
\end{conjecture}

\begin{remark}{Miyaoka's work on the conjecture} 
\label{rem:miyaoka-examples}

\noindent
{\rm In 1989, Miyaoka \cite{Mi2}--\cite{Mi3} showed that if some additional assumptions are made regarding the intersections of the leaves of the various principal foliations, then Conjecture \ref{revised-conjecture}
is true in both cases
$g=4$ and 6.  
Thus far, however, it has not been proven that Miyaoka's additional assumptions are satisfied, in general.}
\end{remark}
 
Cecil, Chi and Jensen \cite{CCJ2} 
made progress on Conjecture \ref{revised-conjecture} in the case $g=4$ by using the
fact that compactness implies irreducibility for a proper Dupin hypersurface
with $g \geq 3$ (see Theorem \ref{thm:CCJ-2007}), 
and then working locally with irreducible proper hypersurfaces in the context 
of Lie sphere geometry, as described earlier in Remark \ref{rem:irreducibility}.  This leads to the following 
local version of Conjecture \ref{revised-conjecture}:

\begin{conjecture}
\label{revised-conjecture-local}
(Cecil-Chi-Jensen, 2007)
Every irreducible connected proper Dupin hypersurface in $S^n$ with four or six principal curvatures
and constant Lie curvatures is Lie equivalent to an isoparametric hypersurface.
\end{conjecture}

\begin{remark}{A reducible example with constant Lie curvature $\psi = 1/2$} 
\label{rem:reducible-with-lie curv-1/2}

\noindent
{\rm In \cite{CecKod} (see also \cite[pp. 80--82]{Cec9},  \cite[pp. 304--306]{CR8}),  Cecil constructed an example of
a reducible, non-compact proper Dupin submanifold with $g=4$ distinct principal 
curvatures and constant Lie curvature
$\psi = 1/2$, which is not Lie equivalent to an open subset of an isoparametric
hypersurface with four principal curvatures in $S^n$.  This example cannot be made compact without destroying the property that the number $g$ of distinct curvatures spheres equals four at each point. This example shows that the 
hypothesis of irreducibility is necessary in Conjecture \ref{revised-conjecture-local}.}
\end{remark}

We now mention some notable facts pertaining to Conjecture \ref{revised-conjecture-local}.\\

\noindent
Case $g = 4$: 
In this case, there is only one Lie curvature,
\begin{equation}
\label{eq:lie-curv}
\psi = \frac{(\mu_1 -\mu_2)(\mu_4 - \mu_3)}{(\mu_1 -\mu_3)(\mu_4 - \mu_2)},
\end{equation}
if we fix the order of the principal curvatures of $M$ to be,
\begin{equation}
\label{eq:pc-order}
\mu_1 < \mu_2 < \mu_3 < \mu_4.
\end{equation}
For an isoparametric
hypersurface with four principal curvatures ordered as in equation (\ref{eq:pc-order}),
M\"{u}nzner's results \cite{Mu}--\cite{Mu2} imply that
the Lie curvature
$\psi = 1/2$, and
the multiplicities satisfy $m_1 = m_3$, $m_2 = m_4$.
Furthermore, if $M \subset S^n$ is a compact, connected proper Dupin hypersurface 
with $g=4$ principal curvatures, then the multiplicities of the principal curvatures must be the same as those of an isoparametric hypersurface by the work of Stolz \cite{Stolz}, so they satisfy $m_1 = m_3$, $m_2 = m_4$.

Cecil-Chi-Jensen \cite{CCJ2} proved the following 
local classification of irreducible proper Dupin hypersurfaces with four principal curvatures and constant Lie curvature $\psi = 1/2$. In the case where all the multiplicities equal one, this theorem was first proven
by Cecil and Jensen \cite{CJ3}.

\begin{theorem}
\label{CCJ} 
(Cecil-Chi-Jensen, 2007)
Let $M \subset S^n$ be a connected irreducible proper Dupin hypersurface with four principal curvatures 
\begin{displaymath}
\mu_1 < \mu_2 < \mu_3 < \mu_4,
\end{displaymath}
having multiplicities,
\begin{equation}
\label{eq:restricted}
m_1 = m_3 \geq 1, \quad m_2 = m_4 =1,
\end{equation}
and constant Lie curvature $\psi = 1/2$. Then $M$ is Lie equivalent to an
isoparametric hypersurface in $S^n$.
\end{theorem}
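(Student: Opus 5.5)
The plan is to work entirely locally in Lie sphere geometry with the method of moving frames, and to finish by verifying the criterion of Theorem \ref{thm:lie-eq-isop-hyp}. Let $\lambda=[K_1,K_3]$ be the Legendre lift of $M$, where $K_1,\dots,K_4$ are the curvature sphere maps; on a line of the quadric $K_2,K_4$ are combinations of $K_1,K_3$. We choose a Lie frame $Y_0,\dots,Y_{n+2}$ along $\lambda$ adapted to the curvature spheres: $Y_0=K_1$ and $Y_1=K_3$ span $\lambda$, the remaining curvature spheres are written as $K_2=Y_0+Y_1$ and $K_4=Y_0+\alpha Y_1$ (after scaling), and $\alpha$ is a fixed function of the Lie curvature $\psi$ via (\ref{eq:lie-curv}). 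The condition $\psi=1/2$ then says that $\alpha$ is a constant, so the four curvature spheres sit at fixed projective positions on the line, exactly as for an isoparametric hypersurface. We complete the frame with $Y_{n+1},Y_{n+2}$ dual to $Y_0,Y_1$ and with tangential vectors split into blocks of sizes $m_1,m_2,m_3,m_4$ spanning the principal distributions.

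Next we write the Maurer--Cartan structure equations $dY_a=\sum \omega_a^b Y_b$. The Dupin condition says that $dK_i$ has no component in the directions of $T_i$ modulo $\lambda$, and the fixed cross-ratio turns the $K_2,K_4$ conditions into linear relations between $\omega$'s. We then take exterior derivatives and apply Cartan's lemma repeatedly. The goal is to express all off-diagonal forms $\omega_a^b$ between different principal blocks through a small set of functions, which play the role of third fundamental form coefficients. Irreducibility enters through Theorem \ref{thm:pinkall-reducibility}: no $K_i$ may lie in an $n$-dimensional linear subspace. This will be used to rule out degenerate branches in which some of these coefficients vanish identically, since a vanishing coefficient would force $K_i$ into a hyperplane section and hence make $\lambda$ reducible.

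The goal of the computation is a frame change after which there exist fixed points $P_1,\dots,P_4$ on a timelike line with $\langle K_i,P_i\rangle=0$. Since $K_2,K_4$ are fixed combinations of $K_1,K_3$, it suffices to find a constant timelike $2$-plane $\Pi$ (spanned by suitable combinations of $Y_{n+1},Y_{n+2}$, corrected by $Y_0,Y_1$) with $d\Pi\subset\Pi$. Then $P_i$ is the point of $[\Pi]$ orthogonal to $K_i$, and $[\Pi]$ is timelike by construction. The $P_i$ are constant as soon as their pairings are expressed via the fixed ratios given by $\alpha$.

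The main obstacle is the differential analysis showing that $\Pi$ is indeed constant, that is, that the relevant connection forms vanish. For this I would exploit the restricted multiplicities $m_2=m_4=1$: with one-dimensional blocks the coefficient functions coupling blocks $2,4$ to blocks $1,3$ form vectors of length $m_1$, and the second structure equations yield quadratic relations among them. I would first try to show that these relations, combined with $\alpha$ being constant, force a normalization in which the coefficients have constant norm, and then prolong once more to derive $d\Pi\subset\Pi$. If the system does not close at that point, I would compute further covariant derivatives of the coefficients and use irreducibility again to exclude the remaining branches.
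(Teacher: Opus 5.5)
Your overall plan (Lie frames adapted to the curvature spheres, Theorem~\ref{thm:pinkall-reducibility} to exclude degenerate branches, Theorem~\ref{thm:lie-eq-isop-hyp} at the end) is the route the paper attributes to Cecil, Chi and Jensen. However, the proposal leaves out the proof itself. Everything that makes the theorem true lies in the step you defer in your last paragraph. That step has three parts: deriving from the structure equations, with $m_1=m_3$ arbitrary and $m_2=m_4=1$, which coefficient functions can be normalized; showing that the system closes; and identifying exactly which vanishing coefficients would put some $K_i$ into a codimension-two subspace. None of this is carried out (``I would first try\ldots'', ``if the system does not close\ldots''). Your outline also uses only that $\alpha$ is constant, never the value $\psi=1/2$. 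The Lie curvature is a Lie invariant equal to $1/2$ on every isoparametric hypersurface with $g=4$. So a correct argument must use that value somewhere or prove that irreducibility forces it, and you do neither.

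Two of the steps you do state are also wrong. First, the reduction in your third paragraph is empty. A constant timelike plane always exists, e.g.\ $[e_1,e_{n+3}]$, and after a change of Lie frame any timelike plane is spanned by combinations of $Y_{n+1},Y_{n+2}$ corrected by $Y_0,Y_1$. Relative to such a plane, the point $P_i(x)$ of $[\Pi]$ orthogonal to $K_i(x)$ is always uniquely defined, because $\Pi^{\perp}$ is positive definite. After a Lie transformation taking $[\Pi]$ to $[e_1,e_{n+3}]$, $P_i$ is constant iff $K_i$ has constant radius. Since $[K]\mapsto K^{\perp}\cap[\Pi]$ is a projectivity of $\lambda(x)$ onto $[\Pi]$, constancy of $\alpha$ only makes the cross-ratio of $P_1(x),\ldots,P_4(x)$ constant; the four points can still move together. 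What you must prove is that three of them are fixed. With $Z_1,Z_2\in\Pi$ dual to $Y_0,Y_1$, this means $dZ_1=\theta Z_1$ and $dZ_2=\theta Z_2$ for one common $1$-form $\theta$.

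Second, your use of irreducibility is misstated. Pinkall's criterion concerns containment of $K_i$ in an $n$-dimensional, i.e.\ codimension-two, subspace of ${\bf RP}^{n+2}$. Containment in a hyperplane $P_i^{\perp}$ is precisely the conclusion you are after. It holds on isoparametric hypersurfaces, which are irreducible for $g\geq 3$ (Theorem~\ref{thm:CCJ-2007}). So ``forced into a hyperplane, hence reducible'' cannot be the mechanism.
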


By Theorem \ref{thm:CCJ-2007} above, we know that
compactness implies irreducibility for proper Dupin hypersurfaces with more than two principal curvatures.
Furthermore, Miyaoka \cite{Mi3} proved that if $\psi$ is constant on a compact proper Dupin hypersurface $M \subset S^n$ with $g=4$, then $\psi = 1/2$ on $M$, when
the principal curvatures are ordered as in equation (\ref{eq:pc-order}).  
As a consequence, we get the following corollary of
Theorem \ref{CCJ}, which proves that Conjecture \ref{revised-conjecture} is true in a special case.

\begin{corollary}
\label{cor:CCJ-compact}
(Cecil-Chi-Jensen, 2007)
Let $M \subset S^n$ be a compact, connected proper Dupin hypersurface with 
four principal curvatures 
\begin{displaymath}
\label{eq:pc-order-2}
\mu_1 < \mu_2 < \mu_3 < \mu_4,
\end{displaymath}
having multiplicities
\begin{displaymath}
m_1 = m_3 \geq 1, \quad m_2 = m_4 =1,
\end{displaymath}
and constant Lie curvature $\psi$.  Then $M$ is Lie equivalent to an isoparametric hypersurface in $S^n$.
\end{corollary}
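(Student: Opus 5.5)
The plan is to reduce Corollary \ref{cor:CCJ-compact} to the local classification in Theorem \ref{CCJ}. That theorem needs three hypotheses: irreducibility, constant Lie curvature equal to $1/2$, and the multiplicity pattern (\ref{eq:restricted}). The compact hypersurface $M$ will be shown to satisfy all three, and Theorem \ref{CCJ} will then be applied to $M$ itself, which is connected.

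The first step is irreducibility. Since $M$ is compact, connected and proper Dupin with $g=4\geq 3$ principal curvatures, Theorem \ref{thm:CCJ-2007} shows that $M$ is irreducible. The statement of that theorem is for hypersurfaces of ${\bf R}^n$, so I would transfer it to $S^n$. Choose a point $q\in S^n\setminus M$, which exists because $M$ is a compact hypersurface and so is not all of $S^n$. Stereographic projection from $q$ is a M\"obius transformation, and it preserves compactness, the proper Dupin property and reducibility, the last because reducibility is defined Lie-invariantly (see Theorem \ref{thm:pinkall-reducibility}). Hence $M$ is irreducible as a Legendre submanifold.

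The second step is the value of the Lie curvature. With the principal curvatures ordered as in (\ref{eq:pc-order}), Miyaoka's result \cite{Mi3} says that a constant Lie curvature $\psi$ on a compact proper Dupin hypersurface with $g=4$ must equal $1/2$. One check is needed here: the ordering $\mu_1<\mu_2<\mu_3<\mu_4$ must be globally consistent on $M$, so that $\psi$ is a well-defined function and the labels match the hypothesis on the multiplicities. This holds because $g$ is constant, so the principal curvatures are distinct continuous functions and their order cannot change on the connected manifold $M$. The multiplicities are given as $m_1=m_3\geq 1$, $m_2=m_4=1$, which is exactly (\ref{eq:restricted}).

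With these three hypotheses in place, Theorem \ref{CCJ} gives that $M$ is Lie equivalent to an isoparametric hypersurface in $S^n$. The main obstacle is the labelling compatibility in the second step. Miyaoka's normalization $\psi=1/2$ and the multiplicity pattern must refer to the same ordering of the principal curvatures, and a cyclic relabelling would change $\psi$ to $1/2$ or to other values such as $-1$ or $2$. I would settle this by using (\ref{eq:lie-curv}) with the fixed ordering (\ref{eq:pc-order}) in both places. The remaining ingredients are direct citations.
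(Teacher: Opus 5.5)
Your proposal is correct and follows the paper's own argument: irreducibility from Theorem~\ref{thm:CCJ-2007}, $\psi = 1/2$ from Miyaoka's result \cite{Mi3}, and then Theorem~\ref{CCJ}. Your added remarks on passing from ${\bf R}^n$ to $S^n$ and on keeping the ordering fixed are harmless details that the paper leaves implicit. One small correction: a cyclic relabelling sends $\psi$ to $1-\psi$, not to $-1$ or $2$.
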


A remaining open question is to resolve what happens in
the general case where $m_2 = m_4$ is also allowed to be greater than one, i.e.,
\begin{equation}
\label{eq:general}
m_1 = m_3 \geq 1,\quad  \ m_2 = m_4 \geq 1,
\end{equation}
and constant Lie curvature $\psi$?\\

\noindent
Regarding this question, we note that
the local proof of Theorem \ref{CCJ} uses the method of moving frames, and it involves a large system
of equations that contains certain sums if some $m_i$ is greater than one, but no 
corresponding sums if all $m_i$ equal one.  These sums make the calculations significantly more difficult, and
so far this method has not led to a proof in the general case (\ref{eq:general}).

Key elements in the proof of Theorem \ref{CCJ}
are Pinkall's \cite{P2} Lie geometric criteria for reducibility (Theorem 
\ref{thm:pinkall-reducibility}), and the Lie geometric criteria for Lie equivalence to
an isoparametric hypersurface (Theorem \ref{thm:lie-eq-isop-hyp}), due to Cecil \cite[p. 77]{Cec9}.\\

\noindent
Case $g = 6$:
Grove and Halperin (1987) proved that if $M \subset S^n$ is a compact proper Dupin hypersurface
with $g=6$ principal curvatures, then all the principal curvatures must have the same multiplicity $m$, and
$m = 1$ or 2, as is the case for an isoparametric hypersurface, as shown by Abresch \cite{Ab}.
They also proved other topological 
results about compact proper Dupin hypersurfaces that support Conjecture \ref{revised-conjecture} in the case $g=6$.

As mentioned in Remark \ref{rem:miyaoka-examples} above,
Miyaoka \cite{Mi3} showed that if some additional assumptions are made regarding the intersections of the leaves of the various principal foliations, then Conjecture \ref{revised-conjecture} is true in the case $g=6$.  
However, it has not been proven that Miyaoka's additional assumptions are satisfied in general,
and so Conjecture \ref{revised-conjecture} remains as an open problem in the case $g=6$.\\

\section{Local Results on Dupin Submanifolds}
\label{sec:local-dupin}

Most local classifications of proper Dupin submanifolds have 
been obtained in the context of Lie sphere geometry.  We will state
these results for hypersurfaces of $S^n$, since we can
always arrange that the point sphere map of a Legendre submanifold
is locally an immersion by taking a parallel submanifold if
necessary.  

The known results depend on the number $g$ of
distinct principal curvatures, and they are progressively harder
to prove as $g$ increases.  In fact, results have only been
obtained up to the case $g=4$, and more remains to be done
in that case.  

Of course, 
a connected proper Dupin hypersurface in $S^n$ 
with one distinct principal curvature must be an open
subset of a hypersphere.  
In the case $g=2$, Pinkall \cite{P2} obtained a complete local classification which we now describe.

A proper Dupin hypersurface in $S^n$ (or ${\bf R}^n$)
with two distinct principal curvatures of respective 
multiplicities $p$ and $q$ is called a
{\em cyclide of Dupin of characteristic} $(p,q)$.
The compact cyclides embedded in ${\bf R}^3$ can all be obtained
through stereographic projection from a standard product of two
circles in the unit sphere $S^3 \subset {\bf R}^4$.  This
construction obviously can be generalized to higher dimensions.

Cecil and Ryan \cite{CRMA} showed that a connected, compact
cyclide $M^{n-1}$ of characteristic $(p,q)$ embedded in
$S^n$ must be M\"{o}bius equivalent to a standard product of spheres,
\begin{equation}
\label{eq:standard-product-spheres}
S^p \times S^q \subset S^n (1) \subset {\bf R}^{p+1}
\times {\bf R}^{q+1} = {\bf R}^{n+1}, \quad r^2 + s^2 = 1,
\end{equation}
where $n = p+q+1$.  Varying the value of $r$ in (20) produces
a family of parallel hypersurfaces.  These are Lie equivalent
by parallel transformation, but they are not M\"{o}bius equivalent
for different values of $r$.

The proof of Cecil and Ryan \cite{CRMA}
uses the assumption of compactness in an essential way, whereas the classification of Dupin surfaces
in ${\bf R}^3$ obtained in the nineteenth century does
not need such an assumption (see, for example, \cite[pp. 151-166]{CR7}).  
Using Lie sphere geometry,
Pinkall \cite{P2} obtained the following classification in arbitrary 
dimensions which also does
not need the assumption of compactness (see also \cite[pp. 148--159]{Cec9}).
\begin{theorem}
\label{thm:pinkall-cyclides}
{\em (a)} Every connected cyclide of Dupin is contained in a
unique compact, connected cyclide.\\
{\em (b)} Any two cyclides of the same characteristic are locally
Lie equivalent.
\end{theorem}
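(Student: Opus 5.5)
The plan is to work entirely in Lie sphere geometry with the Legendre lift $\lambda=[k_1,k_2]$ of the cyclide. Let $K_1,K_2$ be its two curvature sphere maps, with curvature surfaces of dimensions $p$ and $q$, $p+q=n-1$. The first step is to show that the image of each curvature sphere map lies in a linear subspace of the right dimension. Since $\lambda$ is Dupin, $K_1$ is constant along the leaves of the $T_1$ foliation, so $K_1$ factors through the $q$-dimensional leaf space. I would then show that the span of $K_1$ and its first derivatives, taken along the $T_2$ directions, is a fixed $(q+2)$-dimensional subspace $E_1\subset{\bf R}^{n+3}_2$, and similarly $K_2$ generates a fixed $(p+2)$-dimensional subspace $E_2$. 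The mechanism is as follows. Because $g=2$, every line $\lambda(x)$ is spanned by $K_1(x)$ and $K_2(x)$. Differentiating $\langle K_1,K_2\rangle=0$ together with the Dupin relations $dK_1(X)\in{\rm Span}\{K_1,K_2\}$ for $X\in T_1$ gives $\langle E_1,E_2\rangle=0$. A computation with the structure equations (moving frames adapted to $K_1,K_2$) should show that the second derivatives of $K_1$ stay in the span of $K_1$ and its first derivatives. This makes $E_1$ locally constant, hence constant by connectedness. I expect this step to be the main obstacle, since it requires showing that the relevant second fundamental form terms vanish modulo the span.

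Next, I would determine the signatures. Since $E_1\perp E_2$ and $\dim E_1+\dim E_2=p+q+4=n+3$, we get ${\bf R}^{n+3}_2=E_1\oplus E_2$ orthogonally. Each $E_i$ contains null vectors; for instance, $K_1$ and the curve-of-spheres image are lightlike. The image of $K_1$ lies in the quadric $E_1\cap Q$ and has dimension $q$, which equals $\dim E_1-2$. So this quadric is a nondegenerate quadric of dimension $q$, and the metric on $E_1$ must be nondegenerate of signature $(q+1,1)$. Likewise $E_2$ has signature $(p+1,1)$. Consequently $K_1$ is a local diffeomorphism of the leaf space onto an open subset of the Möbius sphere $Q(E_1)\cong S^q$, and $K_2$ maps onto an open subset of $Q(E_2)\cong S^p$.

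From this I would build the global model. Given any point $[u]\in Q(E_1)$ and $[v]\in Q(E_2)$, the line $[u,v]$ lies on $Q^{n+1}$ because $\langle u,v\rangle=0$. The map $S^p\times S^q\to\Lambda^{2n-1}$, $([v],[u])\mapsto[u,v]$, is then a compact Legendre submanifold; this is checked directly from (L1)--(L3). It is proper Dupin with curvature spheres $[u]$ and $[v]$, so it is a compact cyclide of characteristic $(p,q)$. The original $\lambda$ coincides locally with this map, via $x\mapsto(K_2(x),K_1(x))$, and by connectedness $\lambda$ is contained in it. This proves (a). Uniqueness follows because the compact cyclide containing $\lambda$ is determined by the subspaces $E_1,E_2$, which are determined by any open piece of $\lambda$.

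For (b), take two cyclides of characteristic $(p,q)$ with splittings $E_1\oplus E_2$ and $E_1'\oplus E_2'$ of the same signatures. By Witt's theorem there is $A\in O(n+1,2)$ with $A(E_i)=E_i'$, and $A$ carries one compact model onto the other. The original cyclides are open subsets of these models, and the models are homogeneous under $O(E_1)\times O(E_2)$. Composing $A$ with a suitable element of this group then matches any point of one cyclide to any point of the other, which gives local Lie equivalence.
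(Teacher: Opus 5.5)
\textbf{There is a genuine gap in the step you flagged.} Your overall architecture is right and matches Pinkall's proof as presented in \cite[pp.\ 148--159]{Cec9}. One shows that the two curvature sphere maps take values in orthogonal complementary subspaces $E_1,E_2$ of signatures $(q+1,1)$ and $(p+1,1)$. Then $\lambda$ sits inside the compact Legendre submanifold of lines joining $Q(E_1)\cong S^q$ to $Q(E_2)\cong S^p$, and (b) follows from transitivity of $O(E_1)\times O(E_2)$. Your last two steps (the model and part (b)) are fine.

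The mechanism you propose for the first step is not just unproved but false. At a point, $K_1$ and $dK_1(T_2)$ span only a $(q+1)$-dimensional space, since derivatives along $T_1$ add nothing. Every vector in that span is orthogonal to $K_1$. Take $0\neq X\in T_2$ extended to a vector field. By (L2) and (L3), $XK_1\in\lambda^\perp\setminus\lambda$, and $\lambda^\perp/\lambda$ is positive definite. Hence $\langle XXK_1,K_1\rangle=-\langle XK_1,XK_1\rangle<0$, so second derivatives always leave the span. (If the span were constant, it would contain every $K_1(y)$ and be orthogonal to each of them, hence totally null; yet it contains $XK_1$, which has positive length.)

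Your signature argument has a related flaw. A quadric in ${\bf P}(E_1)\cong{\bf RP}^{q+1}$ whose form has a one-dimensional radical is a cone, still of dimension $q$. So the dimension count alone does not force $E_1$ to be nondegenerate.

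\textbf{The missing ingredient} is a second-order vector completing $E_1$, plus the computation showing it can be normalized compatibly with the corresponding vector for $E_2$. Take a Lie frame with the following properties:
\begin{itemize}
\item $Y_1=K_1$, $Y_{n+3}=K_2$, and $\langle Y_1,Y_2\rangle=\langle Y_{n+2},Y_{n+3}\rangle=1$;
\item the $Y_u$ span $dK_1(T_2)$ and the $Y_a$ span $dK_2(T_1)$ modulo $\lambda$;
\item the Dupin condition is used to arrange $\omega_1^{n+3}=\omega_{n+3}^1=0$.
\end{itemize}
Cartan's lemma applied to $d\omega_1^a=0$, $d\omega_{n+3}^u=0$, $d\omega_1^{n+3}=0$ and $d\omega_{n+3}^1=0$, together with $\omega_u^a=-\omega_a^u$, gives three facts: $\omega_u^a=0$; $\omega_u^{n+3}=\sum_v c_{uv}\omega_1^v$; and $\omega_a^1=\sum_b e_{ab}\omega_{n+3}^b$, with $c$ and $e$ symmetric. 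The structure equation for $d\omega_u^a=0$ then yields the coupling identity $e_{ab}\delta_{uv}+c_{uv}\delta_{ab}=0$.

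Hence $e_{ab}=\epsilon\delta_{ab}$ and $c_{uv}=-\epsilon\delta_{uv}$ for a single function $\epsilon$. The single frame change $Y_2\mapsto Y_2+\epsilon Y_{n+3}$, $Y_{n+2}\mapsto Y_{n+2}-\epsilon Y_1$ kills both terms at once and preserves the frame relations and the earlier normalizations. After this, $d\omega_a^1=0$ and $d\omega_u^{n+3}=0$ force $\omega_2^{n+3}=0$ (using $n-1\geq 2$). So $E_1={\rm Span}\{Y_1,Y_2,Y_u\}$ is $d$-invariant, hence constant. Its Gram matrix shows signature $(q+1,1)$ directly, and $E_2=E_1^\perp={\rm Span}\{Y_{n+2},Y_{n+3},Y_a\}$.

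Differentiating $\langle K_1,K_2\rangle=0$ only gives orthogonality of the first-order data $K_1, dK_1(T_2)$ and $K_2, dK_2(T_1)$. That the two second-order completions can be chosen simultaneously invariant and orthogonal is exactly what the coupling identity provides. With this step supplied, the rest of your argument goes through.
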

A consequence of part (a) of the theorem is that
any connected
piece of a cyclide of Dupin of characteristic $(p,q)$ immersed
in $S^n$ determines a unique compact Legendre submanifold 
with domain $S^p \times S^q$.
Pinkall's result can be used to obtain the following M\"{o}bius
geometric characterization of the cyclides in ${\bf R}^n$ 
(see \cite{CecL} or \cite[p .151]{Cec9}).

\begin{theorem}
\label{thm:cyclides-moebius}
{\em (a)} Every connected cyclide of Dupin $M^{n-1}$ of characteristic
$(p,q)$ embedded in ${\bf R}^n$ is M\"{o}bius equivalent to an open
subset of a hypersurface of revolution obtained by revolving a
q-sphere $S^q \subset {\bf R}^{q+1} \subset {\bf R}^n$ about an axis
of revolution ${\bf R}^q \subset {\bf R}^{q+1}$, or a p-sphere $S^p
\subset {\bf R}^{p+1} \subset {\bf R}^n$ about an axis
${\bf R}^p \subset {\bf R}^{p+1}$.\\
{\em (b)} Two such hypersurfaces are M\"{o}bius equivalent if and only if
they have the same value of $\rho = |r|/a$, where $r$ is the signed
radius of the profile sphere $S^q$ and $a>0$ is the distance from
the center
of $S^q$ to the axis of revolution.
\end{theorem}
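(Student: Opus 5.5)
Using Theorem \ref{thm:pinkall-cyclides}, the problem reduces to a Lie-geometric normal form followed by a M\"{o}bius analysis of the residual freedom. Let $M^{n-1}\subset{\bf R}^n$ be a connected cyclide of characteristic $(p,q)$ with Legendre lift $\lambda=[k_1,k_2]$ and curvature sphere maps $K_1$ (multiplicity $p$) and $K_2$ (multiplicity $q$). Since the Dupin condition makes $K_i$ constant along the leaves of its principal foliation, $K_1$ factors through a $q$-dimensional leaf space and $K_2$ through a $p$-dimensional one. For the standard product (\ref{eq:standard-product-spheres}), the images of $K_1$ and $K_2$ are quadrics $Q_1=Q^{n+1}\cap E_1$ and $Q_2=Q^{n+1}\cap E_2$, where ${\bf R}^{n+3}_2=E_1\oplus E_2$ is an orthogonal splitting with $E_1$ of signature $(q+1,1)$ and $E_2$ of signature $(p+1,1)$. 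By part (b) of Theorem \ref{thm:pinkall-cyclides}, the same holds for $M$ after a Lie sphere transformation, so the splitting $E_1\oplus E_2$ is intrinsic to $M$. The task is then to read off the M\"{o}bius geometry from the position of this splitting relative to the point-sphere direction.

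In the M\"{o}bius model for ${\bf R}^n$, point spheres are the points of $Q^{n+1}$ orthogonal to a fixed timelike vector, for example $e_{n+3}$. I would decompose $e_{n+3}=w_1+w_2$ with $w_i\in E_i$. The condition $\langle e_{n+3},e_{n+3}\rangle=-1$ forces at least one $w_i$ to be timelike, say $w_2$. The Lie transformations preserving the splitting $E_1\oplus E_2$ form the group $O(E_1)\times O(E_2)$, which acts transitively on vectors of a given norm within each factor. We may therefore normalize $w_1$ and $w_2$ up to their norms, and the cases where $w_1$ is timelike, null, or spacelike can then be treated separately.

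The key step is to identify a concrete model. Take the hypersurface obtained by revolving a $q$-sphere of signed radius $r$, whose center lies at distance $a$ from an axis ${\bf R}^q\subset{\bf R}^{q+1}$, with the revolution taking place in ${\bf R}^n={\bf R}^{q+1}\times{\bf R}^{p}$. Its profile spheres form one family of curvature spheres, and the spheres centered on the axis form the other. I would compute $E_1,E_2$ for this model explicitly and check that $|w_1|^2/|w_2|^2$ (or the corresponding invariant) is a monotone function of $\rho=|r|/a$. For $\rho<1$ this gives an embedded ring type, with the other degenerate cases arising as limits. Swapping the roles of $p$ and $q$ gives the second alternative in (a). Since every admissible normalized pair $(w_1,w_2)$ is realized by some $\rho$, every cyclide is M\"{o}bius equivalent to an open subset of such a hypersurface, which proves (a).

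For (b), a M\"{o}bius transformation fixes $e_{n+3}$ up to sign and maps the splitting to the splitting. Hence the norms $\langle w_i,w_i\rangle$ are M\"{o}bius invariants, which gives the ``only if'' direction. Conversely, if two hypersurfaces have equal $\rho$, the transitivity of $O(E_1)\times O(E_2)$ on vectors of fixed norm provides a transformation carrying one normalized configuration to the other while fixing $e_{n+3}$, so the two are M\"{o}bius equivalent. I expect the main obstacle to be the explicit computation of the splitting for the revolution model and the verification that the invariant depends on $\rho$ alone and bijectively. Close behind is the requirement that embeddedness in ${\bf R}^n$ exclude the degenerate null and spacelike cases, or at least place them among the revolution models in the limit.
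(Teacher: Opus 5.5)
\paragraph{Main gap: the ``only if'' direction of (b) when $p=q$.} A M\"{o}bius transformation taking $M$ to $M'$ sends curvature spheres to curvature spheres and preserves multiplicities. When $p\neq q$, the multiplicities distinguish the two families, so it carries $E_2$ to $E_2'$. When $p=q$, however, it may interchange $E_1$ and $E_2$. Then only the unordered pair $\{\langle w_1,w_1\rangle,\langle w_2,w_2\rangle\}$ is invariant, not the individual norms.

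The computation you postponed settles what happens. For your revolution model, $E_2$ is spanned by the curvature spheres centered at the centers of the rotated profile spheres, and one finds
\[
\langle w_2,w_2\rangle=-\rho^2, \qquad \langle w_1,w_1\rangle=\rho^2-1 .
\]
So for $p=q$ and $\rho<1$, the values $\rho$ and $\sqrt{1-\rho^2}$ give the same unordered pair.

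This is a genuine equivalence, not an artifact of your method. Project $S^p(\cos\theta)\times S^p(\sin\theta)\subset S^{2p+1}$ stereographically from a pole $(0,\nu)$, $|\nu|=1$, in the second factor; the image is the revolution of a $p$-sphere with $\rho=\sin\theta$. The congruent product $S^p(\sin\theta)\times S^p(\cos\theta)$, obtained by swapping the factors, projects to one with $\rho=\cos\theta$. For instance, ring tori in ${\bf R}^3$ with $\rho=1/2$ and $\rho=\sqrt{3}/2$ are M\"{o}bius equivalent. Consistently, the Willmore energy $\pi^2/(\rho\sqrt{1-\rho^2})$ is symmetric under $\rho\mapsto\sqrt{1-\rho^2}$.

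So your argument proves (b) only for $p\neq q$. For $p=q$, the assertion must be amended to identify $\rho$ with $\sqrt{1-\rho^2}$ when $\rho<1$; as stated, it cannot be proved.

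\paragraph{Smaller points.} Otherwise, your route is the natural way to derive the result from Theorem \ref{thm:pinkall-cyclides}: Pinkall's splitting, the decomposition $e_{n+3}=w_1+w_2$, and Witt's theorem for $O(E_1)\times O(E_2)$. It does prove (a), but three points need fixing.

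First, the null and spacelike cases for $w_1$ are neither excluded by embeddedness nor obtained as limits. By the formula above they occur exactly for $\rho=1$ and $\rho>1$, where the profile sphere touches or crosses the axis. This is why (a) speaks of an open subset of a possibly singular hypersurface of revolution, whose Legendre lift is still an immersion.

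Second, you must check $w_1,w_2\neq 0$, because $O(E_i)$ is not transitive on all vectors of norm zero. This holds because $w_i=0$ would give $E_i\perp e_{n+3}$. Every curvature sphere lying in $E_i$ would then be a point sphere, which is impossible for an immersed hypersurface.

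Third, part (b) of Theorem \ref{thm:pinkall-cyclides} is only a local statement. To obtain the splitting on all of $M$, invoke part (a), or use the local constancy of the span of $K_i$ and its derivatives.
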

Note that the profile sphere is allowed to intersect the axis
of revolution, thereby resulting in singularities.  However,
in the context of Lie sphere geometry, the corresponding Legendre
map is an immersion.

\begin{remark} 
\label{rem:cyclides-ivey}
{\rm The classical cyclides of Dupin in ${\bf R}^3$ are the only surfaces
for which all lines of curvature are circles or straight lines.
Using exterior differential systems,
Ivey \cite{Ivey} showed that any surface in ${\bf R}^3$ containing
two orthogonal families of circles  is a cyclide of Dupin.}
\end{remark}

Finally, we note that the cyclides of Dupin have been
used in computer aided geometric design of surfaces.  See,
for example, the papers of Degen \cite{Degen}, Pratt 
\cite{Pr1}--\cite{Pr2}, and Srinivas and Dutta \cite{SD1}--\cite{SD4}.

The case $g=3$ has proven to be much more difficult than the
case of two principal curvatures.  In his
dissertation, Pinkall \cite{P1} (see also
\cite{P3}, \cite{CC2} or \cite[pp. 168--190]{Cec9}) obtained
a complete local classification up to Lie
sphere transformation for proper Dupin hypersurfaces 
with three principal curvatures in ${\bf R}^4$. 
Pinkall found that any two irreducible
proper Dupin hypersurfaces with $g=3$ in ${\bf R}^4$ are locally
Lie equivalent, each being Lie equivalent to an open subset
of an isoparametric hypersurface in $S^4$.  However, he found
a 1-parameter family of Lie equivalence classes among the
reducible proper Dupin hypersurfaces with $g=3$ in ${\bf R}^4$.

In two papers \cite{CJ2}--\cite{CJ3}, Cecil and Jensen
used a notion called local irreducibility in the formulation of certain classification
results.  Specifically, a proper Dupin submanifold $\lambda: M^{n-1} \rightarrow \Lambda^{2n-1}$
is said to be {\em locally irreducible},
if there does not exist any open subset $U \subset M^{n-1}$ such that
the restriction of $\lambda$ to $U$ is reducible.

Theoretically, this is a stronger condition than irreducibility of $\lambda$ itself.  However,
using the analyticity of proper Dupin submanifolds, Cecil, Chi and Jensen \cite{CCJ2}
 proved the following
result which shows that the concepts of local irreducibility and irreducibility are equivalent.
(See also \cite[pp. 145--146]{Cec9} for a complete proof of this result.)

\begin{theorem}
\label{prop:4.2.10a} 
Let $\lambda: M^{n-1} \rightarrow \Lambda^{2n-1}$ be a connected, proper Dupin submanifold.  If the restriction of
$\lambda$ to an open subset $U \subset M^{n-1}$ is reducible, then $\lambda$ is reducible.  Thus, a connected
proper Dupin submanifold is locally irreducible if and only if it is irreducible.
\end{theorem}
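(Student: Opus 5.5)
The plan is to combine Pinkall's criterion for reducibility (Theorem \ref{thm:pinkall-reducibility}) with the analyticity of proper Dupin submanifolds (Theorem \ref{thm:Dupin-algebraic}, or more precisely its Legendre version, where the curvature sphere maps are analytic). Suppose the restriction of $\lambda$ to an open set $U$ is reducible.

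\textbf{Step 1: reduce to a linear condition on $U$.} By Theorem \ref{thm:pinkall-reducibility} applied to $\lambda|_U$, there is an index $i$ such that $K_i(U)$ lies in an $n$-dimensional linear subspace $E \subset {\bf RP}^{n+2}$. Shrinking $U$ to a connected open subset does not change this.

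Since $\lambda$ is proper Dupin on all of $M^{n-1}$, the multiplicities are constant. Hence each curvature sphere map $K_j$ is globally smooth on $M^{n-1}$, with locally constant labeling.

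A subtlety is that the labeling $K_1,\ldots,K_g$ could be permuted by monodromy. I would handle this by passing to a finite covering, or simply by working with the curvature sphere that continues $K_i$ along paths. The property being analytic and local makes this harmless.

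\textbf{Step 2: regularity.} I would use the fact, from Cecil--Chi--Jensen's work behind Theorem \ref{thm:Dupin-algebraic}, that a proper Dupin submanifold is real analytic. After a Lie sphere transformation and a parallel transformation, locally $\lambda$ is the lift of a proper Dupin hypersurface in ${\bf R}^n$, which is analytic. The curvature sphere maps $[\mu_j k_1 + k_2]$ are then analytic, since the $\mu_j$ are simple roots of the characteristic polynomial with constant multiplicity. An analytic choice of lift $\tilde K_i: M^{n-1} \to {\bf R}^{n+3}_2$ exists locally.

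\textbf{Step 3: analytic continuation.} Write $E = [W]$ with $W$ an $(n+1)$-dimensional subspace, and choose linear functionals $\ell_1, \ell_2$ cutting out $W$. The functions $\ell_a(\tilde K_i)$ are analytic and vanish on $U$. By connectedness and the identity theorem they vanish on all of $M^{n-1}$, so $K_i(M^{n-1}) \subset E$. The zero set of an analytic function is open and closed once it contains an open set; local lifts differ by nonzero scalars, so the vanishing condition is well defined.

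Then Theorem \ref{thm:pinkall-reducibility} gives that $\lambda$ is reducible. The converse direction of the equivalence is trivial: an irreducible $\lambda$ has no reducible restriction by what we just proved, and local irreducibility implies irreducibility by taking $U = M^{n-1}$.

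\textbf{Main obstacle.} The key step is the analyticity of the curvature sphere maps in the Legendre setting. The point sphere map may fail to be an immersion, so one must first locally apply a parallel transformation $P_t$ to obtain an immersed hypersurface. Then one needs Pinkall's sketch, as completed by Cecil--Chi--Jensen, that proper Dupin hypersurfaces are analytic. This comes from the parametrization built from the spherical principal foliations.

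I would invoke this as an established result rather than reprove it. I would also check carefully that Pinkall's criterion is genuinely local, so that applying it on $U$ and on $M^{n-1}$ is legitimate.
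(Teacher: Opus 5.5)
Your proposal is correct and follows essentially the same route as the proof the paper cites (Cecil--Chi--Jensen, and Cecil's book, pp.~145--146): apply Pinkall's criterion (Theorem~\ref{thm:pinkall-reducibility}) on a connected piece of $U$, then use the analyticity of proper Dupin submanifolds and the identity theorem to propagate the linear conditions $\ell_a(\tilde K_i)=0$ from $U$ to all of $M^{n-1}$, and apply Pinkall's criterion again. One small slip: the $\mu_j$ are roots of constant multiplicity $m_j$, not simple roots; they are still analytic (for instance as simple roots of the $(m_j-1)$-st derivative of the characteristic polynomial), so the argument is unaffected.
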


In higher dimensions, the focus has been on classifying the irreducible 
proper Dupin hypersurfaces and little has
been done in attempting to classify the reducible ones up
to Lie equivalence, although this appears to be a problem
where some progress could be made.

The first result in higher dimensions is due to Niebergall
\cite{N1} who showed that every proper Dupin hypersurface
in ${\bf R}^5$ with three principal curvatures is reducible.
Cecil and Jensen \cite{CJ2} then generalized the
results of Pinkall and Niebergall as follows. 
\begin{theorem}
\label{thm:CJ-3-p-c}
Let $f:M \rightarrow S^n$ be a connected proper Dupin hypersurface with
three distinct principal curvatures of multiplicities
$m_1, m_2,$ and $m_3$, respectively.  If the hypersurface $M$ is
irreducible, then $m_1 = m_2 = m_3$, and $M$ is Lie 
equivalent to an isoparametric hypersurface in a sphere.
\end{theorem}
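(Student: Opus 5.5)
I would prove Theorem \ref{thm:CJ-3-p-c} by the method of moving frames in Lie sphere geometry, working with the Legendre lift $\lambda = [k_1,k_2]$ of $f$.

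\textbf{Step 1: adapted frames.} Take a local Lie frame $Y_1,\ldots,Y_{n+3}$ in ${\bf R}^{n+3}_2$ along $M$. Choose $Y_1, Y_2, Y_3$ to be the three curvature spheres $K_1, K_2, K_3$. This is possible up to scaling, since any three distinct points of the projective line $\lambda(x)$ can be normalized to the form $Y_1$, $Y_{n+3}$, $Y_1+Y_{n+3}$. Complete to a frame in which the tangent directions split into the principal spaces of dimensions $m_1, m_2, m_3$.

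\textbf{Step 2: structure equations.} Write the Maurer--Cartan structure equations $dY_a=\sum \omega_a^b Y_b$. The Dupin condition (each $K_i$ is constant along its leaves) is equivalent to the vanishing of certain $\omega$'s. Differentiating these vanishing conditions and applying Cartan's lemma produces the fundamental third-order invariants. These are tensors $F^{ij}_{k}$ coupling the three principal distributions, roughly $\omega$-components expressing how $dK_i$ moves in directions $T_j$ and $T_k$.

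\textbf{Step 3: use irreducibility.} By Pinkall's criterion (Theorem \ref{thm:pinkall-reducibility}), irreducibility means that no curvature sphere map $K_i$ has image in an $n$-dimensional linear subspace of ${\bf RP}^{n+2}$. By Theorem \ref{prop:4.2.10a}, this holds on every open subset. I would show that if the bilinear map $T_j\times T_k\to T_i^\perp$ (roughly) given by these invariants vanishes on an open set, then the span of $K_i$ and its derivatives is constant of dimension $n+1$, contradicting irreducibility. So these maps are nondegenerate in a suitable sense, e.g. $F(X,\cdot)$ is injective for every nonzero $X$. Injectivity of the maps $T_j\to \mathrm{Hom}(T_k,T_i)$, applied cyclically in $i,j,k$, then gives $m_1\le m_2\le m_3\le m_1$, so $m_1=m_2=m_3$.

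\textbf{Step 4: isoparametric criterion.} With equal multiplicities, the next differentiation should force the remaining connection forms to be determined. The goal is to find three points $P_1,P_2,P_3$ on a fixed timelike line with $\langle K_i,P_i\rangle=0$. I would look for these as suitable combinations of frame vectors, roughly complementary to each $K_i$ in the 4-space spanned by $\lambda$ and its dual. I would then show, using the structure equations, that their span is constant, and conclude by Theorem \ref{thm:lie-eq-isop-hyp}.

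\textbf{Main obstacle.} The hard part is Step 3 and its continuation in Step 4: extracting nondegeneracy from irreducibility, and handling the covariant derivatives of the cubic invariants when $m_i>1$, where the sums over frame indices make the prolongation messy. I would first fix a normal form for the third-order invariants, in the spirit of Cartan's analysis where $m$ takes the values $1,2,4,8$. I would then show that the remaining torsion-like quantities are forced to vanish by the symmetry identities obtained from $d^2=0$.
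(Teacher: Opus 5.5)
Your overall architecture matches the paper's outline: Lie frames with $[k_3]=[k_1+k_2]$, third-order invariants $F^\alpha_{ap}$, Pinkall's reducibility criterion, and the timelike-line criterion of Theorem \ref{thm:lie-eq-isop-hyp}. However, Step 3 has a genuine gap.

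\textbf{Step 3 does not give the nondegeneracy you use.} Ruling out vanishing of the \emph{whole} cubic form on an open set is far too weak. What the paper extracts from irreducibility is sharper: no single row or column of the arrays $[v_{p\alpha}]$, $[v_{a\alpha}]$, $[v_{ap}]$ vanishes on an open set. Equivalently, there is no smooth nonzero field $X$ in one principal distribution with $F(X,\cdot,\cdot)\equiv 0$; that lemma is what you would need to prove.

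Even granting it, it only says that $X\mapsto F(X,\cdot,\cdot)$ is injective from $T_j$ into $T_k^*\otimes T_i^*$. That gives $m_j\le m_k m_i$, not $m_j\le m_k$. For example, multiplicities $(2,2,3)$ are compatible with all three slot maps being injective.

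Your cyclic count instead needs the pointwise statement that $F(X,\cdot):T_k\to T_i$ is injective for \emph{every} nonzero $X$. You conflate these two notions. The second is a conformality-type property that does not follow from irreducibility alone. It has to come from quadratic identities among the $F^\alpha_{ap}$, obtained by exterior differentiation of the structure equations: orthogonality and equal-length relations among the vectors $v_{p\alpha}$ and their analogues. Such identities are what turn ``no row or column vanishes'' into $m_1=m_2=m_3$. For instance, if for fixed $\alpha$ the $m_2$ vectors $v_{p\alpha}\in{\bf R}^{m_1}$ were mutually orthogonal of equal length with $m_2>m_1$, the whole column would vanish, forcing reducibility. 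Your proposal never derives these identities; it invokes $d^2=0$ only later, and only vaguely.

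\textbf{Step 4 lacks the scalar invariant.} The same identities drive the paper's Step 4. Once all multiplicities equal $m$, one shows that all vectors in all three arrays have a common length $\rho$. This single function plays the role of Pinkall's invariant $c$ from the case $m=1$. Irreducibility gives $\rho\neq 0$, and a change of frame makes $\rho$ locally constant. Only with that normalization can one write down three points $P_1,P_2,P_3$ on a fixed timelike line with $\langle k_i,P_i\rangle=0$. Your Step 4 (``the next differentiation should force the remaining connection forms to be determined'') has no such invariant or normalization, so as written it does not produce the constant timelike line needed for Theorem \ref{thm:lie-eq-isop-hyp}.
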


We now give a brief outline of the proof of this theorem.
We work in the context of Lie sphere geometry and consider
a proper Dupin submanifold $\lambda :M^{n-1} \rightarrow \Lambda ^{2n-1}$
with three curvature spheres.  As in Section \ref{sec:submanifolds-lie-geometry}, we can parametrize
the Dupin submanifold as
$\lambda = [k_1 , k_2 ]$, where $[k_1]$ and $[k_2]$ are two
curvature sphere maps.  We can also arrange that the third
curvature sphere map has the form $[k_3] = [k_1 + k_2]$.  

We first compute the derivatives of the $[k_i]$ using the method
of moving frames.  In the case where $M$ has dimension three, Pinkall \cite{P3}
found one function $c$ with the property that all of the terms
arising in the exterior differentiation 
of the frame fields could eventually be expressed
in terms of $c$ and its derivatives.  If $c$ is identically zero,
then $\lambda$ is reducible.  If $c$ is never zero on $M$, then
one can arrange that $c=1$ with an appropriate choice of frame,
and all such hypersurfaces are locally Lie equivalent to Cartan's
isoparametric hypersurface in $S^4$.

In the general case where the three curvature spheres have respective
multiplicities $m_1, m_2$ and $m_3$, there are
$m_1 \cdot m_2 \cdot m_3$ functions 
$F^{\alpha}_{ap},$ where
\[1\leq a \leq m_1, \hspace{.25in} m_1 + 1 \leq p \leq m_1 + m_2,
\hspace{.25in} m_1 + m_2 + 1 \leq \alpha \leq n-1,\]
which are defined in a similar
way to the one function $c$ of Pinkall. These can be arranged
in vector form, 
$v_{p\alpha} = (F^{\alpha}_{ap}),  1 \leq a \leq m_1,$ with
$v_{a\alpha}$ and  $v_{ap}$ defined in a similar way.  One first
shows that if a column or row of any of the arrays 
$[v_{p\alpha}]$, $[v_{a\alpha}]$, $[v_{ap}]$ is identically
zero on an open subset $V \subset M$, then the restriction of
$\lambda$ to $V$ is reducible.
This result is then applied to show
that unless all of the multiplicities are equal, 
$\lambda$ must contain a reducible open subset,  and so it is reducible.

In the case where all the multiplicities equal $m$,
one next shows that all of the vectors in all of the
arrays have the same length $\rho$.  This one function $\rho$
actually plays the same role that $c$ did in the case $m=1$.
If $\lambda$ is irreducible, then $\rho$ is non-zero on $M$, and
it can be made locally constant with an appropriate choice of frame.
The proof is completed by showing that there exist three points 
$P_1, P_2, P_3$ on a certain timelike line in ${\bf RP}^{n+2}$ such that
$\langle k_i, P_i \rangle = 0,$ for $1\leq i \leq 3$, where $[k_i]$ are the
curvature sphere maps of $\lambda$.  By Theorem 5.2, this implies that
$\lambda$ is Lie equivalent to an isoparametric hypersurface.

The next case $g=4$ is still more complicated,
but many aspects of the approach outlined above apply.  As in Section \ref{sec:compact-proper-dupin},
one can order the principal
curvatures as in equation \eqref{eq:pc-order}
and determine a unique Lie curvature $\psi$ by equation \eqref{eq:lie-curv} which satisfies
$0< \psi < 1$.  As in the
$g=3$ case, one can reparametrize the Dupin submanifold as
$\lambda = [k_1, k_2]$, where $[k_1]$ and $[k_2]$ are two of the
curvature sphere maps,  and then arrange that a third curvature
sphere map satisfies $[k_3] = [k_1+k_2]$.  Then the fourth curvature
sphere $[k_4]$ is determined by the Lie curvature $\psi$.  

For $g=4$ or 6, it is not true that every irreducible Dupin 
hypersurface is Lie equivalent to an isoparametric hypersurface.
The examples of Pinkall and Thorbergsson \cite{PT} and Miyaoka and Ozawa
\cite{MO} discussed in Section \ref{sec:compact-proper-dupin} are irreducible and are not
Lie equivalent to an isoparametric hypersurface, because they have non-constant Lie curvatures.

As noted in Section \ref{sec:compact-proper-dupin}, 
this leads to Conjecture \ref{revised-conjecture-local} above which states that
every irreducible connected proper Dupin hypersurface in $S^n$ with four or six principal curvatures
and constant Lie curvatures is Lie equivalent to an isoparametric hypersurface.
See the discussion in Section \ref{sec:compact-proper-dupin} following 
Conjecture \ref{revised-conjecture-local} for progress that has been made on that conjecture,
in particular, Theorem \ref{CCJ}  and Corollary \ref{cor:CCJ-compact}.

A more general problem is to attempt to identify key local
invariants of Dupin submanifolds within the context of moving
Lie frames, as was done in the paper of
Niebergall \cite{N2}, and the papers of Cecil and Jensen \cite{CJ2},
and Cecil, Chi, and Jensen \cite{CCJ2}.

Another problem is to attempt to obtain a complete local
classification of reducible Dupin hypersurfaces of arbitrary dimensions
with $g=3$ principal curvatures up to Lie equivalence. 
As mentioned above, Pinkall obtained such a classification in the 
case of $M^3 \subset {\bf R}^4$.  In that case, while there is only
one class of irreducible Dupin hypersurfaces, the reducible ones 
determine a 1-parameter family of Lie equivalence classes.  It may be
possible to obtain a similar classification in the reducible case in some higher dimensions
by using the framework established
to prove Theorem \ref{thm:CJ-3-p-c}.

The approach of Lie sphere geometry can also be used
to obtain results in M\"{o}bius (conformal) geometry.  As noted in
Theorem 7.2,
one can derive a local M\"{o}bius classification in the case $g=2$
from Pinkall's Lie-geometric classification.  In \cite{PT}
Pinkall and Thorbergsson introduced a M\"{o}bius invariant called
the {\em M\"{o}bius curvature} which can distinguish among the Lie
equivalent parallel hypersurfaces in a family of isoparametric
hypersurfaces.  

C.-P. Wang used the method
of moving frames to determine a complete
set of M\"{o}bius invariants for surfaces in ${\bf R}^3$ without
umbilic points \cite{Wc}, and for hypersurfaces in ${\bf R}^4$
with three distinct principal curvatures at each point \cite{Wc1}.
He then applied this result to derive a local
classification of Dupin hypersurfaces in ${\bf R}^4$ with three
principal curvatures
up to M\"{o}bius transformation.  

In two related papers, Ferapontov \cite{Fera}--\cite{Fera1} 
explored the
relationship between Dupin and 
isoparametric hypersurfaces and Hamiltonian
systems of hydrodynamic type.  Ferapontov posed several research
problems in that context. \\

\section{Classifications of Taut Submanifolds}
\label{sec:classifications-taut-submanifolds}

In this section, we survey the known classification results
on taut submanifolds.  To a reasonable extent, we have attempted to 
make this section self-contained, although some references
to the previous sections are inevitable.  In addition to the presentation in the
original version of this paper, 
we will also closely follow the presentation in the book
\cite[pp. 327--342]{CR8} regarding more recent results. 
In general, results have been obtained for manifolds with relatively 
simple homology, but general classifications of taut submanifolds are fairly rare.

In the paper which introduced the {\em STPP},
Banchoff \cite{Ban1} showed that a taut embedding of
$S^1$ into ${\bf R}^n$ must be a metric circle in a plane.
In the same paper,
Banchoff  also obtained a complete classification of
compact taut (2-dimensional) surfaces in Euclidean spaces.
We now give a brief outline of his results.  

As noted in Section \ref{sec:taut-submanifolds}, Banchoff observed that
because tautness is invariant under stereographic projection,
there exists a substantial taut
non-spherical embedding of a compact manifold $M$
into ${\bf R}^n$ if and only if there exists a substantial
taut spherical embedding of $M$ into $S^n \subset {\bf R}^{n+1}$. 
As a consequence, one can invoke
Kuiper's result on the bound on the codimension 
to obtain Theorem \ref{thm:kuiper-bound-on-codimension}.  In particular,
if $f:M^2 \rightarrow {\bf R}^n$ is a substantial taut embedding, then
$n \leq 5$, and if $n=5$, then $f$ is an embedding of ${\bf RP}^2$ as 
a spherical Veronese surface in $S^4 \subset {\bf R}^5$. 
(Recall that a spherical Veronese surface in $S^4 \subset {\bf R}^5$ is the well-known 
standard embedding of the real projective plane
${\bf RP}^2$ into a metric sphere $S^4 \subset {\bf R}^5$, see Remark \ref{rem:std-embeddingd-projective-spaces}.)

Next, if 
$f:M^2 \rightarrow {\bf R}^4$ is a substantial taut non-spherical embedding, then
$f(M^2)$ must be the image under stereographic projection of
a spherical Veronese surface in ${\bf R}^5$.  Thus, the problem is
reduced to finding all compact taut surfaces in $S^3$.  Again by
stereographic projection, this is equivalent to finding all
compact taut surfaces in ${\bf R}^3$.

A key step in Banchoff's classification of compact taut surfaces
in ${\bf R}^3$ is showing that if $f:M^2 \rightarrow {\bf R}^3$
is a taut embedding of a compact surface $M^2$, then 
$f(M^2)$ lies in between the two spheres $S_1$ and $S_2$
tangent to $f(M^2)$ at $f(x)$ and centered at the focal
points $p_i = f(x) + (1/\mu_i)\xi(x)$, for $i=1,2$, respectively,
where $\xi$ is a field of unit normals to $f(M^2)$, and
the $\mu_i$ are the principal curvatures of $f$.  Thus,
if $f(M^2)$ has one umbilic point, then it must be a metric
sphere, because it lies between two identical spheres
$S_1$ and $S_2$ at the umbilic point.  This implies that
$f(M^2)$ must be either a metric sphere or smooth torus, because
any embedding of a surface of higher genus would necessarily have an
umbilic point.  Suppose now that $f(M^2)$ is a taut torus with no
umbilic points.  Then Banchoff shows that the principal curvatures must
be constant along their corresponding lines of curvature, i.e.,
$f(M^2)$ is proper
Dupin, and so it is a cyclide of Dupin in ${\bf R}^3$. 
 
Cecil \cite{CecJDG} then generalized Banchoff's argument to
the non-compact case to again show that a taut
$f(M^2) \subset {\bf R}^3$ 
must be proper Dupin.  This implies that $f(M^2)$ must a plane, circular
cylinder or parabolic ring cyclide, which is obtained by
by inverting a torus of revolution in a sphere centered at a point
on the torus.  Its name comes from the fact that its focal set consists
of a pair of parabolas (see \cite[pp. 151--166]{CR7} for more detail).
These results are combined in the
following theorem, which is a complete classification of
taut surfaces in  ${\bf R}^n$.

\begin{theorem}
\label{thm:taut-surfaces}
Let $f:M^2 \rightarrow {\bf R}^n$ be a substantial taut embedding of a
surface $M^2$. \\ 
{\rm (a)} If $M^2$ is compact, then $f(M^2)$ is a metric sphere or
a cyclide of Dupin in ${\bf R}^3$, a spherical Veronese surface embedded in
$S^4 \subset {\bf R}^5$,
or a surface
in ${\bf R}^4$ related to one of these by 
stereographic projection.\\
{\rm (b)} If $M^2$ is non-compact, then it is a plane, circular cylinder
or parabolic ring cyclide in ${\bf R}^3$, or it is the image in
${\bf R}^4$ of a punctured spherical Veronese surface under
stereographic projection from $S^4$, where the pole of the stereographic projection lies on
the spherical Veronese surface in $S^4$.
\end{theorem}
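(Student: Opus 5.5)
The plan is to organize the proof by the codimension $n$, using the bound of Theorem \ref{thm:kuiper-bound-on-codimension}. Take $k=2$. In the compact case this gives $n \leq 5$, and $n=5$ forces a spherical Veronese surface. In the non-compact case it gives $n \leq 4$, and $n=4$ forces the stereographic image of a Veronese surface punctured at a point of itself. Three cases then remain: compact with $n=4$, compact with $n=3$, and non-compact with $n=3$. The cases $n\le 2$ are trivial, or excluded by substantiality apart from the plane.

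For compact $M^2$ with $n=4$, I would invert: a taut substantial embedding into ${\bf R}^4$ corresponds, via stereographic projection, to a taut spherical embedding into $S^4\subset{\bf R}^5$. Either that embedding is substantial in ${\bf R}^5$, in which case it is a Veronese surface by Theorem \ref{thm:kuiper-bound-on-codimension}, or it lies in a hyperplane section of $S^4$. Such a section is a metric $S^3$, and we reduce to a taut surface in $S^3$, hence by stereographic projection again to a compact taut surface in ${\bf R}^3$. The non-compact case $n=4$ is already settled by Theorem \ref{thm:kuiper-bound-on-codimension}(b).

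The core is compact $M^2\subset{\bf R}^3$. By Theorem \ref{thm:taut-implies-semi-Dupin}, $M$ is Dupin, and each principal space carries a curvature surface along which the principal curvature is constant.

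\begin{enumerate}
\item I would first show Banchoff's ``sandwich'' property. At each $x$, let $S_1$ and $S_2$ be the curvature spheres centered at the focal points $p_i=f(x)+\xi(x)/\mu_i$. Then $f(M)$ lies in the closed region between $S_1$ and $S_2$. To see this, note that $x$ is a degenerate critical point of $L_{p_i}$. If $f(M)$ poked inside the ball bounded by $S_1$ (say $\mu_1\le\mu_2$, with the appropriate orientation), then small perturbations of $p_1$ would produce a non-degenerate $L_p$ with an extra local minimum near $x$, contradicting the STPP. The analogous argument with local maxima handles $S_2$.
\item From the sandwich property, an umbilic point forces $S_1=S_2$, so $f(M)$ is that sphere. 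A compact surface of genus $\neq 1$ has Euler characteristic $\neq 0$, so by Poincar\'e--Hopf applied to the principal line fields it must have an umbilic. Hence $M$ is either a metric sphere or an umbilic-free torus.
\item In the umbilic-free case $g=2$ is constant, so $M$ is proper Dupin by Theorem \ref{thm:taut-dupin}. Then Theorem \ref{thm:C-R-math-ann} (or the classical characterization) shows that it is a cyclide of Dupin.
\end{enumerate}

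For non-compact $M^2\subset{\bf R}^3$, I would follow Cecil's extension of this argument. The sandwich property is still local, but the umbilic argument fails because there is no Poincar\'e--Hopf. So the first goal is to show directly that umbilics force a plane or sphere locally, and then use connectedness to show that otherwise $M$ is umbilic-free. Next, Theorem \ref{thm:taut-implies-dupin} makes $M$ proper Dupin, so by Theorem \ref{thm:pinkall-cyclides}(a) it lies in a compact cyclide, possibly with a point removed. Properness of the embedding, together with the requirement that $L_p$ have no local maxima, should leave only three possibilities: the plane, the circular cylinder, and the parabolic ring cyclide, which is the inversion of a torus through a point on it.

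I expect the main obstacles to be the sandwich property (step 1) and, in the non-compact case, ruling out umbilics and the remaining cyclides. For step 1 I would first try the \v{C}ech formulation \eqref{eq:def-taut} with closed balls slightly inside $S_1$, since that avoids having to perturb degenerate critical points.
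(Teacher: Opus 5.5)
Your proposal is correct and follows the paper's route. You use Kuiper's codimension bound and stereographic projection to reduce to ${\bf R}^3$, then Banchoff's sandwich property (an umbilic forces a sphere, and Poincar\'e--Hopf leaves only an umbilic-free torus, which is proper Dupin and hence a cyclide), then Cecil's extension to the non-compact case. The only differences are that you cite the later general results (Theorem \ref{thm:taut-dupin}/\ref{thm:taut-implies-dupin}, and Theorem \ref{thm:pinkall-cyclides} together with properness) where the paper credits direct arguments of Banchoff and Cecil, and in the non-compact case the umbilic step needs no "local" argument: since there are no local maxima, any umbilic must have $\mu=0$, and the minimum half of the sandwich argument then puts all of $f(M)$ in that tangent plane.
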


The first results after Banchoff's paper dealt with taut embeddings
of submanifolds with relatively simple topology.  Nomizu and
Rodriguez \cite{NR} proved the following.
\begin{theorem}
\label{thm:nomizu-rodriguez}
Let $M^k, k \geq 2,$ be a complete Riemannian manifold
isometrically immersed in  ${\bf R}^n$.
Every Morse function of the form $L_p$ has index 0 or $k$ at each of its critical points if and only if $M^k$
is embedded as a $k$-plane or a
metric $k$-sphere $S^k \subset {\bf R}^{k+1} \subset {\bf R}^n$.
\end{theorem}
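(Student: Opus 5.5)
The converse is direct. For a $k$-plane, a non-degenerate $L_p$ has exactly one critical point, the foot of the perpendicular, and it is a minimum, so its index is $0$. For a metric $k$-sphere $S^k \subset {\bf R}^{k+1}$, a non-degenerate $L_p$ has exactly two critical points, where the line through $p$ and the center meets the sphere. All $k$ focal points of $(M,x)$ sit at the center, so the two indices are $0$ and $k$.

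For the forward direction, the plan is to show that every point of $M$ is umbilic with respect to every normal, and then use the standard classification of totally umbilic submanifolds. Fix $x \in M$ and a unit normal $\xi$ at $x$, and let $\lambda_1 \le \cdots \le \lambda_k$ be the eigenvalues of $A_\xi$. Suppose some eigenvalue is nonzero. After replacing $\xi$ by $-\xi$ if needed, we may assume $\lambda_k > 0$. Suppose also that $A_\xi$ is not a multiple of the identity, so that either $\lambda_{k-1} \ne \lambda_k$ or some $\lambda_i < \lambda_k$. The focal points along the normal line $f(x) + t\xi$ are at $t = 1/\lambda_i$ for the nonzero $\lambda_i$. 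Since the distinct focal parameters are finitely many, there is some $t$ strictly between two consecutive positive focal parameters, or between $0$ and $1/\lambda_j$ with a negative one excluded. We can choose such a $t$ so that the number of focal points, counted with multiplicity, on the segment from $p = f(x) + t\xi$ to $f(x)$ lies strictly between $0$ and $k$. The hypothesis $k \ge 2$ together with non-umbilicity is exactly what gives such an intermediate count.

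This choice makes $x$ a non-degenerate critical point of $L_p$ with index strictly between $0$ and $k$. However, $L_p$ need not be a Morse function on all of $M$, and the hypothesis only speaks about Morse functions. I handle this by perturbation. The set of $p$ for which $L_p$ is Morse has full measure by Sard's Theorem. A non-degenerate critical point persists under small perturbation of $p$, with the same index, by the implicit function theorem applied to the normal exponential map $F$, which is a local diffeomorphism at the non-focal pair $(x, t\xi)$. So there is $p'$ near $p$ with $L_{p'}$ Morse and having a critical point near $x$ of that intermediate index. This contradicts the hypothesis. Hence at every $x$ and for every $\xi$, either $A_\xi = 0$ or $A_\xi$ is a scalar multiple of the identity, which means $M$ is totally umbilic.

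The final step, which I regard as the main obstacle, is to pass from totally umbilic to the global conclusion. Writing $A_\xi = \langle H, \xi\rangle I$ with $H$ the mean curvature vector, the Codazzi equation (valid since $k \ge 2$) shows that $H$ is parallel in the normal bundle and has constant length. If $H \equiv 0$, then $M$ is totally geodesic, so $f(M)$ lies in an affine $k$-plane. If $H \ne 0$, then $f + H/|H|^2$ is constant, and $f(M)$ lies in a $(k+1)$-dimensional affine subspace spanned by $T_xM$ and $H$, on a sphere of radius $1/|H|$. Completeness then gives that the immersion is a covering onto the whole $k$-plane or the whole $k$-sphere. For $k \ge 2$ both are simply connected, so the covering is a diffeomorphism and $f$ is an embedding.
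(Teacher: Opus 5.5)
Your proposal is correct and follows essentially the same route as the paper. You prove total umbilicity by placing $p$ on the normal line just past the first focal point $1/\lambda_k$, so the index equals the multiplicity of the largest eigenvalue, which lies strictly between $0$ and $k$ unless $A_\xi$ is scalar; you then perturb to a Morse $L_{p'}$ with a critical point of the same index. Your Codazzi/covering argument and your check of the converse fill in steps the paper leaves implicit. In the converse, when $p\notin{\bf R}^{k+1}$ you should first project $p$ into ${\bf R}^{k+1}$, since the focal set of $(M,x)$ is the affine subspace through the center orthogonal to ${\bf R}^{k+1}$, not just the center.
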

\begin{proof}
The proof is accomplished by showing that $M^k$ is a totally
umbilic submanifold.  This is a consequence of the
following elementary but important argument.  Let $f:M^k \rightarrow
{\bf R}^n$ be the isometric immersion.
Let $\xi$
be any unit normal to $f(M^k)$ at any point $f(x)$.  
We want to show that 
the shape operator $A_{\xi}$ is a scalar multiple of the identity.
If $A_{\xi}=0$, then we are done.  If not, then by replacing
$\xi$ by $-\xi$, if necessary, we can assume that $A_{\xi}$ has
a positive eigenvalue.  Let $\lambda$ be the largest eigenvalue
of $A_{\xi}$, and let $t$ satisfy $1/\lambda <t< 1/\mu$, where
$\mu$ is the next largest positive eigenvalue of $A_{\xi}$
(just consider $t > 1/\lambda$ if there are no other positive
eigenvalues).  If $q=f(x) + t\xi$, then $L_q$ has a non-degenerate
critical point at $x$ with index equal to the multiplicity $m$
of $\lambda$.  It may be that $L_q$ is not a Morse function. If so, 
then there exists a Morse function
$L_p$, with $p$ near $q$, such that $L_p$ has a critical 
point near $x$ of the same index $m$. Now since $m>0$,
the hypotheses imply that $m=k$, and thus $A_{\xi} = \lambda I$,
as desired.
\end{proof}

An immediate consequence is the following result obtained
independently by Carter and West \cite{CW1}. The result
is true in the case $k=1$ by the work of Banchoff mentioned 
earlier.  See \cite{CecToh} for a similar characterization
of metric spheres in hyperbolic space.
\begin{theorem}
\label{thm:carter-west-taut-spheres}
Let $f:S^k \rightarrow {\bf R}^n$ be a taut embedding.
Then $f(S^k)$ is a metric sphere in a $(k+1)$-dimensional
Euclidean subspace ${\bf R}^{k+1} \subset {\bf R}^n$.
\end{theorem}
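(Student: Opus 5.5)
The plan is to derive the statement from Theorem \ref{thm:nomizu-rodriguez} for $k \geq 2$, and to treat $k=1$ by Banchoff's result that a taut embedding of $S^1$ is a metric circle in a plane. So assume $k \geq 2$ and let $f:S^k \rightarrow {\bf R}^n$ be taut. We give $S^k$ the metric induced by $f$. Since $S^k$ is compact, this metric is complete, and $f$ becomes an isometric embedding. The task is then to verify the hypothesis of Theorem \ref{thm:nomizu-rodriguez}: every Morse function of the form $L_p$ has index $0$ or $k$ at each of its critical points.

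To check this hypothesis, let $L_p$ be any Morse function of this form on $S^k$. Tautness means $L_p$ is perfect with respect to some field ${\bf F}$, so $\mu_j(L_p) = \beta_j(S^k;{\bf F})$ for all $j$. For every field, the Betti numbers of $S^k$ are $1$ in degrees $0$ and $k$ and zero otherwise. Hence $L_p$ has exactly two critical points, one of index $0$ and one of index $k$, and no critical points of index strictly between $0$ and $k$.

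With the hypothesis verified, Theorem \ref{thm:nomizu-rodriguez} says $f(S^k)$ is a $k$-plane or a metric $k$-sphere lying in some ${\bf R}^{k+1} \subset {\bf R}^n$. The $k$-plane is ruled out by compactness of $S^k$. Therefore $f(S^k)$ is a metric sphere in a $(k+1)$-dimensional affine subspace, which is the claim.

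I do not expect a real obstacle here. The one point to watch is that Theorem \ref{thm:nomizu-rodriguez} is stated for complete isometric immersions; this is handled by pulling back the Euclidean metric via $f$, as above. The case $k=1$ falls outside that theorem and is covered by Banchoff's classification instead.
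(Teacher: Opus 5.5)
Your proposal is correct and follows the paper's route: the paper calls the result an immediate consequence of Theorem~\ref{thm:nomizu-rodriguez} for $k \geq 2$, with the case $k=1$ covered by Banchoff's theorem on taut circles. Your added details are the routine steps the paper leaves implicit. These are that perfectness of $L_p$ on $S^k$ allows only indices $0$ and $k$, that the induced metric is complete, and that the $k$-plane is excluded by compactness.
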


\begin{remark}{Tight and taut immersions into hyperbolic space}
\label{rem:tight-taut-hyp-space}

\noindent
{\rm In hyperbolic space $H^m$ there are three types of totally umbilic hypersurfaces: spheres, horospheres
and equidistant hypersurfaces (those at a fixed oriented distance from a totally geodesic
hyperplane, including hyperplanes themselves). 
These have constant sectional curvature which is positive, zero, or negative, for spheres, horospheres and equidistant hypersurfaces, respectively.  Thus, there are three natural types of distance
functions $L_p$, $L_h$ and $L_\pi$, which measure the distance
from a given point $p$, horosphere $h$, or hyperplane $\pi$, respectively. Just as in Euclidean space 
(see Theorem \ref{thm:nomizu-rodriguez}),
the totally umbilic hypersurfaces of $H^m$ can be characterized in terms of the critical point behavior of these distance functions as follows (see Cecil-Ryan \cite{CRNag}).

\begin{theorem}
\label{thm:C-R-10.1}
Let $M^n, n\geq 2$, be a connected, complete Riemannian manifold isometrically immersed in $H^m$. Every Morse function of the form $L_p$ or $L_\pi$ has index 0 or $n$ at each of its critical points if and only if $M$
is embedded as a sphere, horosphere, or equidistant hypersurface in a totally geodesic $H^{n+1} \subset H^m$.
\end{theorem}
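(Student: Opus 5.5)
I would follow the same strategy as in the proof of Theorem \ref{thm:nomizu-rodriguez}: show that every shape operator $A_\xi$ of $M$ in $H^m$ is a scalar multiple of the identity, and then use the classification of totally umbilic submanifolds of hyperbolic space. The converse direction is a direct check. For a sphere, horosphere or equidistant hypersurface in a totally geodesic $H^{n+1}$, every nondegenerate critical point of $L_p$ or $L_\pi$ lies on a geodesic normal to $M$. Along that geodesic the focal points of $(M,x)$ all coincide, with multiplicity $n$, because $A_\xi$ is a multiple of $I$. Focal points in directions normal to $H^{n+1}$ do not occur, since $H^{n+1}$ is totally geodesic and the corresponding shape operators vanish. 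Hence the index is $0$ or $n$.

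For the main direction, fix $x\in M$ and a unit normal $\xi$ at $x$ with $A_\xi\neq 0$. Replacing $\xi$ by $-\xi$ if necessary, let $\lambda>0$ be the largest eigenvalue of $A_\xi$, with multiplicity $m$. The index theorem in $H^m$ says the following. For the geodesic $\gamma(t)=\exp_x(t\xi)$, the point $\gamma(t)$ is a focal point of $(M,x)$ exactly when $\coth t$ is a principal curvature of $A_\xi$, with the same multiplicity. If $L_q$, $q=\gamma(t)$, has a nondegenerate critical point at $x$, its index is the number of focal points on the segment from $x$ to $q$.

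The case split is where the need for two types of distance function appears.

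\begin{itemize}
\item \textbf{Case $\lambda>1$.} There is a focal point at distance $t_0=\coth^{-1}\lambda$. Choose $q=\gamma(t)$ with $t$ slightly larger than $t_0$ but before the next focal point. Then $L_q$ has a nondegenerate critical point at $x$ of index $m>0$.
\item \textbf{Case $\lambda\le 1$.} There are no focal points from points. Instead, use the function $L_\pi$ for a totally geodesic hyperplane $\pi$ orthogonal to $\gamma$ at $\gamma(t)$. The level sets of $L_\pi$ are equidistant hypersurfaces, with principal curvatures $\tanh s$ at distance $s$. Comparing $A_\xi$ with the shape operator $(\tanh s)I$ of the level set through $x$ shows that the Hessian of $L_\pi|_M$ at $x$ is negative exactly on eigendirections whose principal curvature exceeds $\tanh s$. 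Choose $s$ with $\tanh s$ slightly less than $\lambda<1$, possible for $0<\lambda<1$. Then $x$ is a nondegenerate critical point of index $m$ (up to sign conventions, the index counts eigenvalues above the threshold).
\item \textbf{Case $\lambda=1$.} Horosphere functions would give the threshold directly, but those are not in the hypothesis. So I would instead perturb, taking $s$ large with $\tanh s<1$, or use $L_p$ with $\coth t$ just above $1$ if no eigenvalue lies between, and get index $m$ again.
\end{itemize}

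In each case, if the function is not Morse, a generic small perturbation of $q$ or $\pi$ gives a Morse function with a nearby critical point of the same index $m$, as in the Euclidean proof. The hypothesis then forces $m=n$, so $A_\xi=\lambda I$. Hence $M$ is totally umbilic.

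Once $M$ is totally umbilic with $n\ge2$, the standard Codazzi argument gives a parallel mean curvature normal field and shows $M$ lies in a totally geodesic $H^{n+1}$. There it is a complete umbilic hypersurface of constant principal curvature $c$, which is a sphere if $c>1$, a horosphere if $c=1$, and equidistant if $c<1$. Completeness, together with the fact that an umbilic hypersurface of this kind is an embedded complete orbit, gives the embedding.

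I expect the main obstacle to be the careful bookkeeping in the second step: getting the Hessian of $L_\pi$ right and handling the borderline eigenvalue $\lambda=1$ and the eigenvalues in $[-1,1]$ using only point and hyperplane distance functions.
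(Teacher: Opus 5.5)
Your proposal is correct and follows the route the paper points to: the paper gives no proof of its own, defers to Cecil--Ryan, and says it works ``just as in'' the Nomizu--Rodriguez argument of Theorem~\ref{thm:nomizu-rodriguez}. That route is to produce a nondegenerate $L_p$ or $L_\pi$ whose critical point at $x$ has index equal to the multiplicity of the top eigenvalue of $A_\xi$, conclude that $M$ is totally umbilic, and then classify complete umbilic submanifolds of $H^m$. One slip: in your case $\lambda=1$, the alternative of using $L_p$ with $\coth t$ just above $1$ gives index $0$, so drop it. It is cleanest to pick a threshold $c$ strictly between $\lambda$ and the next eigenvalue with $c\neq 1$, and use $L_p$ if $c>1$ or $L_\pi$ if $c<1$. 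At the end, make explicit that the covering map coming from completeness is a diffeomorphism because $S^n$ ($n\ge 2$) and ${\bf R}^n$ are simply connected.
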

An immersion $f:M \rightarrow H^m$ is called {\em taut}, 
{\em horo-tight}, or {\em tight}, respectively, 
if every non-degenerate function $L_p$, $L_h$, or $L_\pi$, has the minimum number of critical points
required by the Morse inequalities.  See Cecil and Ryan \cite{CRNag}, \cite{CRLon}, \cite[pp. 233--236]{CR7}, and Izumiya et al. \cite{Izumiya2005}--\cite{Izumiya2003},
for more on these conditions. }
\end{remark}

Approaching the problem from a different point of
view, Hebda \cite{Heb1} asked which ambient spaces admit taut embeddings
of hyperspheres.
He found that a complete simply connected $n$-dimensional manifold
which admits a taut embedding of $S^{n-1}$ is either homeomorphic
to $S^n$, diffeomorphic to ${\bf R}^n$ or diffeomorphic to
$S^{n-1} \times {\bf R}$.

Next Carter and West \cite{CW1} obtained the following characterization
of spherical cylinders.
\begin{theorem}
\label{thm:carter-west-non-compact}
Let $f:M^{n-1} \rightarrow {\bf R}^n$ be a taut embedding of a
non-compact manifold such that $H_k(M^{n-1};{\bf Z}_2) = 
{\bf Z}_2$ for some $k,\  0 < k < n-1$, and $H_i(M^{n-1};{\bf Z}_2) = 0$,
for $i \neq 0,k$.  Then $M^{n-1}$ is diffeomorphic to
$S^k \times {\bf R}^{n-k-1}$, and $f$ is a standard product embedding.
\end{theorem}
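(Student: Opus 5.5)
The plan is to show that at every point, for every unit normal $\xi$, the shape operator $A_\xi$ has a very restricted form, namely at most two distinct principal curvatures, one of them $0$ with multiplicity $n-1-k$. I then want to read off the product structure from the Dupin property.

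The starting point is the Morse-theoretic constraint coming from the homology. Since $M$ is non-compact with $H_i(M;{\bf Z}_2)$ nonzero only for $i=0,k$, tautness forces the critical points of every Morse function $L_p$ to have index only $0$ or $k$. The sets $M_r(L_p)$ are compact because $f$ is proper. For each $r$, the injectivity of $H_i(M_r(L_p))\to H_i(M)$ forces $\beta_i(M_r)=0$ for $i\neq 0,k$. By perfectness, $\mu_i(L_p,r)=0$ for such $i$ and all $r$.

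Next I would run the Nomizu--Rodriguez argument from the proof of Theorem \ref{thm:nomizu-rodriguez}, pointwise, on a fixed normal $\xi$ at $x$, with the eigenvalues of $A_\xi$ ordered.

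1. If $A_\xi$ has a positive eigenvalue $\lambda_1$ of multiplicity $m$, a point $q=f(x)+t\xi$ with $t$ just past $1/\lambda_1$ gives a nondegenerate critical point of index $m$. A perturbation to a Morse $L_p$ preserves this, so $m=k$.
2. Pushing $t$ past the next positive eigenvalue would give index $k+m'\notin\{0,k\}$. Hence $A_\xi$ has at most one positive eigenvalue, and its multiplicity is exactly $k$.
3. Applying the same argument to $-\xi$, there is at most one negative eigenvalue, also of multiplicity $k$.
4. The two cannot coexist unless $2k=n-1$. Even then the index count shows that taking $t$ large relative to both focal points on opposite sides does not produce index $2k$, so I would handle that case separately. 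The cleanest route is to use the non-compactness as below.

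The consequence is that for each $\xi$, $A_\xi$ has a nonzero eigenvalue of multiplicity $k$ (or none), and the eigenvalue $0$ has multiplicity at least $n-1-k$.

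Now the hypersurface has at every point a principal curvature $0$ of multiplicity $\ge n-1-k\ge1$ and a principal curvature $\mu$ of multiplicity $k$. By Theorem \ref{thm:taut-implies-dupin}, $M$ is Dupin. On the open set where $\mu\ne0$ the multiplicities are constant, so $M$ is proper Dupin with $g=2$ there. That set is a cyclide of characteristic $(k,n-1-k)$ whose zero principal curvature has $(n-1-k)$-planes as curvature surfaces. By Theorem \ref{thm:cyclides-moebius}/Pinkall's Theorem \ref{thm:pinkall-cyclides}, a cyclide whose one principal curvature is identically $0$ is, as Euclidean (not merely Möbius) object, an open piece of $S^k(r)\times{\bf R}^{n-k-1}$. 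Here $\mu$ is constant $1/r$ because the focal map for $\mu$ is constant along the flat leaves and the focal set is a $(n-k-1)$-plane.

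\textbf{Main obstacle.} The hardest step, I expect, is ruling out points where $\mu=0$ (totally geodesic points) and the case $\mu$ changing sign, to get a globally constant $\mu\ne0$. For this I would argue by connectedness and analyticity: the set where $\mu\neq0$ is open and, on each component, $\mu$ is a nonzero constant. Hence it is also closed unless $M$ is totally geodesic, i.e.\ a hyperplane, which has the wrong homology since $k>0$. With $\mu\equiv 1/r$ and the $0$-leaves complete flat $(n-1-k)$-planes (by completeness from properness), $f(M)$ is a complete open-and-closed piece of $S^k(r)\times{\bf R}^{n-k-1}$. It is therefore the whole standard cylinder, which gives the diffeomorphism with $S^k\times{\bf R}^{n-k-1}$ and the standard product embedding.
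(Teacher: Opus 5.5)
The paper gives no proof of this theorem (it is quoted from Carter--West), so I have judged your argument on its own. It has two genuine gaps.

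\textbf{First gap: the ``consequence'' after Step~4 is never proved, and your index count cannot prove it.} Steps 1--3 show only that $A_\xi$ has at most one positive and at most one negative eigenvalue, each of multiplicity $k$. Nothing there prevents both from occurring whenever $2k\le n-1$, not only when $2k=n-1$; the eigenvalue $0$ would then have multiplicity $n-1-2k$. The reason is that an $L_p$ with $p$ on one side of $f(x)$ never counts focal points on the other side. Your promised use of non-compactness never returns to this case. In fact your argument uses only that $H_i(M;{\bf Z}_2)=0$ for $i\neq 0,k$ and $H_k\neq 0$. It never uses $\dim H_k=1$, and that is fatal. Let $\Sigma\subset{\bf R}^{2k+1}$ be the image of a standard product $S^k\times S^k\subset S^{2k+1}$ under stereographic projection from a point of it, and put $M=\Sigma\times{\bf R}^{n-1-2k}$. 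Then $M$ is a properly embedded, non-compact, taut hypersurface with the homology of $S^k\vee S^k$. So by your Step~0 every Morse $L_p$ has critical points only of index $0$ and $k$. Yet off the two planar curvature leaves through the pole, both principal curvatures of $\Sigma$ are nonzero. There the eigenvalue $0$ has multiplicity only $n-1-2k$, and $M$ is not a cylinder. A correct proof must therefore exploit $\beta_k(M)=1$, i.e.\ that each Morse $L_p$ has at most one critical point of index $k$. That requires a genuinely global argument, for instance producing an $L_p$ with two index-$k$ critical points whenever $A_\xi$ has eigenvalues of both signs.

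\textbf{Second gap: the Euclidean identification of the cyclide is false as stated.} Even granting principal curvatures $0$ (multiplicity $n-1-k$) and $\mu$ (multiplicity $k$), you claim such a cyclide is an open piece of $S^k(r)\times{\bf R}^{n-k-1}$, with the $\mu$-focal map constant along the flat leaves. A counterexample is the cone $\{tv:\ t>0,\ v\in\Sigma'\}\times{\bf R}^{n-k-2}$ over a small $k$-sphere $\Sigma'\subset S^{k+1}\subset{\bf R}^{k+2}$. It is proper Dupin with exactly these multiplicities. But $\mu=\kappa/t$, where $\kappa$ is the curvature of $\Sigma'$ in $S^{k+1}$, varies along the rulings, and its focal point slides along the axis. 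Theorem~\ref{thm:cyclides-moebius} is a M\"{o}bius statement and cannot distinguish a cone from a cylinder. What excludes the cone is completeness:
\begin{itemize}
\item The set $\{\mu\neq 0\}$ is the set of minimal relative nullity. Since $f$ is proper, the induced metric is complete, so by Ferus's theorem the nullity leaves there are complete.
\item The Codazzi equation shows that $1/\mu$ is an affine function of arclength along each line in such a leaf. Completeness forces it to be constant.
\item Together with the Dupin property, $\mu$ is then locally constant on $\{\mu\neq0\}$.
\end{itemize}
Only after this do your open--closed argument and Segre's classification of isoparametric hypersurfaces in ${\bf R}^n$ yield the standard cylinder.
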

Thorbergsson \cite{Th3}--\cite{Th4}
then obtained the following characterization of
highly connected taut submanifolds of arbitrary codimension.
(See \cite{JM} for a related result involving minimal submanifolds.)
\begin{theorem}
\label{thm:thorbergsson-highly-connected-taut}
Let $M^{2k}$ be a compact $(k-1)$-connected but not $k$-connected
taut submanifold of ${\bf R}^n$ which does not lie in any
totally umbilic hypersurface of ${\bf R}^n$, then either:\\
{\rm (a)} $n = 2k+1$ and $M^{2k}$ is a cyclide of Dupin diffeomorphic
to $S^k \times S^k$ or
{\rm (b)} $n = 3k+1$ and $M^{2k}$ is a standard embedding of the 
projective plane ${\bf FP}^2$, 
where ${\bf F}$ is the division algebra
{\bf R}, {\bf C}, {\bf H} (quaternions),
{\bf O} (Cayley numbers) for $k=1,2,4,8,$ respectively.
\end{theorem}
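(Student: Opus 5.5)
We would follow Thorbergsson's strategy: use tautness to get Dupin-type structure from Ozawa's theorem, reduce to a spherical compact hypersurface or a focal-type submanifold, and then invoke the isoparametric theory together with Münzner's restriction on $g$.

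\textbf{Step 1: reduce to a sphere and count critical points.} Since tautness is invariant under stereographic projection and Möbius transformations, we may assume $M^{2k}$ lies substantially in some $S^{n-1}$, or else use the projection to move between the two cases. The hypothesis that $M$ lies in no totally umbilic hypersurface rules out degenerate placements. Homologically, $(k-1)$-connectedness and Poincaré duality leave only three possibly nonzero ${\bf Z}_2$-Betti numbers: $\beta_0=\beta_{2k}=1$ and $\beta_k=b\ge 1$. Hence every nondegenerate $L_p$ has exactly $2+b$ critical points, and their indices are only $0$, $k$ and $2k$.

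\textbf{Step 2: the shape operators have very rigid spectra.} Mimic the proof of Theorem~\ref{thm:nomizu-rodriguez}. For a unit normal $\xi$ at $x$, order the positive eigenvalues of $A_\xi$. Moving the point $p=f(x)+t\xi$ past successive focal points realises, after a small perturbation, critical points whose index is any partial sum of the multiplicities of the positive eigenvalues. Since only the indices $0,k,2k$ occur, every shape operator $A_\xi$ is one of two types:
\begin{itemize}
\item a multiple of the identity, or
\item at most two distinct eigenvalues on each side of zero (after the usual Möbius normalisation), each with multiplicity exactly $k$.
\end{itemize}
In particular the principal curvature multiplicities are $k$. Next, apply Theorem~\ref{thm:taut-implies-semi-Dupin} and Ozawa's Theorem~\ref{thm:Ozawa}. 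The critical manifolds at a focal point are taut $k$-dimensional submanifolds, and by the same index argument they are metric $k$-spheres (Theorem~\ref{thm:carter-west-taut-spheres}). So $M$ is foliated in many directions by $k$-spheres, and its normal exponential map has the structure of an isoparametric-type family.

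\textbf{Step 3: the tube case, codimension one.} Pass to a small tube $M_\epsilon$ over $M$ inside the sphere; tautness is preserved by Pinkall's result. The aim is to show that $M_\epsilon$ is a compact proper Dupin hypersurface with $g$ distinct principal curvatures, all of multiplicity $k$ except possibly the new tube direction. Theorem~\ref{thm:dupin-restriction-g} then gives sum of Betti numbers $2g$; for the tube this sum is $2(2+b)$, so $g=2+b$.

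\textbf{Step 4: case analysis on the codimension.}
\begin{itemize}
\item \emph{Case (a), hypersurfaces.} If $M$ is a hypersurface of $S^{2k+1}$ with two principal curvatures of multiplicity $k$, then Theorem~\ref{thm:C-R-math-ann} makes it Möbius equivalent to $S^k\times S^k$, which is a cyclide of Dupin in ${\bf R}^{2k+1}$.
\item \emph{Case (b), higher codimension.} The tube has $g=3$, with multiplicities $(k,k,k)$ forced by Cartan's and Miyaoka's results (Theorem~\ref{thm:miyaoka-1}). So $M_\epsilon$ is Lie equivalent to a Cartan isoparametric hypersurface, and $k\in\{1,2,4,8\}$. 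We then must recover $M$ itself as the focal submanifold, i.e.\ the standard ${\bf FP}^2\subset S^{3k}\subset{\bf R}^{3k+1}$. This uses the fact that the point-sphere map of $M$ collapses the $k$-sphere fibres, which pins down the Lie transformation up to Möbius ones.
\end{itemize}
In both cases we must also exclude every other combination of codimension and $b$, i.e.\ show $b\le 2$ or at least $b=1,2$ with the codimension $1$ or $k$. We would do this by combining the eigenvalue rigidity of Step 2 with Kuiper's bound, Theorem~\ref{thm:kuiper-bound-on-codimension}.

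\textbf{Main obstacle.} The hardest point is the rigidity in Step 2 for submanifolds of higher codimension: showing that every nonumbilic $A_\xi$ has exactly two nonzero eigenvalues $\pm$-symmetric with multiplicity $k$. This is what is needed to deduce that the tube is proper Dupin rather than merely Dupin. We would first try to exploit the symmetry $\xi\mapsto-\xi$ together with the linking-cycle description of index-$k$ critical points, which forces the index to jump only at the $k$-sphere curvature surfaces.
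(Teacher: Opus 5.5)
\textbf{There is a genuine gap: you never rule out umbilic normals.} Your index argument in Step 2 correctly shows that each $A_\xi$ is either a multiple of the identity or has exactly two distinct eigenvalues, each of multiplicity $k$. Every later step, however, needs the first alternative to be impossible. Theorem~\ref{thm:C-R-math-ann} in case (a), and Theorems~\ref{thm:dupin-restriction-g} and~\ref{thm:miyaoka-1} in case (b), all require \emph{proper} Dupin. The tube $M_\epsilon$ has principal curvatures $\lambda_i/(1-\epsilon\lambda_i)$ together with $-1/\epsilon$, so at any $(x,\xi)$ with $A_\xi=\lambda I$ it has two distinct principal curvatures instead of three. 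Likewise, a hypersurface with an umbilic point is not proper Dupin.

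This is exactly where the hypothesis that $M$ lies in no totally umbilic hypersurface must be used, and you only gesture at it. The missing argument is Banchoff's:
\begin{enumerate}
\item[(i)] Suppose $A_\xi=\lambda I$ with $\lambda>0$, and set $p=f(x)+t\xi$. Then $x$ is a nondegenerate local minimum of $L_p$ for $t<1/\lambda$ and a local maximum for $t>1/\lambda$.
\item[(ii)] By the STPP both extrema are global. So $M$ lies outside the open ball and inside the closed ball of radius $t$ about $p$.
\item[(iii)] Letting $t\to 1/\lambda$ forces $M$ onto the sphere of radius $1/\lambda$ about $f(x)+\xi/\lambda$.
\item[(iv)] If $\lambda=0$, applying the local-minimum half to $\pm\xi$ puts $M$ in the hyperplane through $f(x)$ orthogonal to $\xi$.
\end{enumerate}
Both conclusions contradict the hypothesis. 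Hence every $A_\xi$ has exactly two eigenvalues, each of multiplicity $k$.

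Once you have this, the rest is short:
\begin{enumerate}
\item[(a)] In codimension one, $M$ is proper Dupin with $g=2$, hence a cyclide.
\item[(b)] In codimension at least two, the tube is proper Dupin with $g=3$ and multiplicities $k,k,n-2k-1$. The Betti count then gives $b=1$, and equality of the multiplicities for $g=3$ gives $n=3k+1$.
\end{enumerate}
No case analysis over $b$ and no appeal to Kuiper's bound is needed. Also, the codimension in case (b) is $k+1$, not $k$.

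Relatedly, the step you flag as the main obstacle is misdirected. The claim that every nonumbilic $A_\xi$ has two \emph{nonzero}, $\pm$-symmetric eigenvalues is false, and it is not what proper Dupin requires. Two counterexamples:
\begin{enumerate}
\item[(1)] A torus of revolution in ${\bf R}^3$ (case (a) with $k=1$) has a vanishing principal curvature along its top and bottom circles.
\item[(2)] $S^k(r)\times S^k(s)\subset S^{2k+1}$ has principal curvatures $s/r$ and $-r/s$, which are not symmetric when $r\neq s$.
\end{enumerate}
The eigenvalue rigidity you need is already fully supplied by your index count; what is missing is non-umbilicity.

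Your closing step in case (b) is a workable route once the tube is known to be proper Dupin. There you pass from the Lie equivalence of $M_\epsilon$ with a Cartan hypersurface to a M\"obius equivalence of $M$ with the standard ${\bf FP}^2$. This works because the point-sphere curvature sphere of a spherically substantial $M$ spans a hyperplane, which lets you correct the Lie transformation to a M\"obius one.
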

In a related paper, Hebda \cite{Heb2} constructed tight smooth
embeddings of arbitrarily many copies of $S^k \times S^k$ into
${\bf R}^{2k+1}$.  No taut embeddings of these manifolds
exist by Theorem 8.5.
 
The next case in terms of the homology is when $M$ has
the same homology as $S^k \times S^m$, where $k \neq m$.
In that case, Cecil and Ryan \cite{CRMA} (see also \cite[pp. 202--206]{CR7}, \cite[pp. 332--338]{CR8})
obtained the following.
\begin{theorem}
\label{thm:C-R-taut-product-spheres}
A taut hypersurface $M \subset {\bf R}^n$ with the same
${\bf Z}_2$-homology as $S^k \times S^{n-k-1}$ is a cyclide
of Dupin.
\end{theorem}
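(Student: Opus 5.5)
The plan is to prove that $M$ is a compact proper Dupin hypersurface with exactly two distinct principal curvatures, of multiplicities $k$ and $n-k-1$, and then to invoke Theorem~\ref{thm:C-R-math-ann} to conclude that $M$ is a cyclide. The starting point is the homology count. Since $M$ is taut with respect to ${\bf Z}_2$, every Morse function $L_p$ on $M$ is perfect. It therefore has exactly four critical points, with indices $0$, $k$, $n-k-1$ and $n-1$; these coincide in pairs if $k=n-k-1$.

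\textbf{Step 1 (Nomizu--Rodriguez argument).} Fix $x\in M$ and a unit normal $\xi$. Order the principal curvatures of $A_\xi$ as $\lambda_1>\cdots>\lambda_g$ with multiplicities $m_1,\dots,m_g$.
\begin{itemize}
\item As $q=f(x)+t\xi$ moves outward, the index of $L_q$ at the critical point $x$ jumps through the partial sums $m_1+\cdots+m_j$ taken over the positive $\lambda_j$.
\item Moving in the direction $-\xi$ does the same for the negative $\lambda_j$.
\item Perturbing $q$ to a nearby non-focal $p$, as in the proof of Theorem~\ref{thm:nomizu-rodriguez}, gives a Morse function $L_p$ with a critical point of that index near $x$.
\end{itemize}
Applying this at every point, and using an inversion if needed to move a zero principal curvature off zero, every partial sum $m_1+\cdots+m_j$ must lie in $\{0,k,n-k-1,n-1\}$. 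Hence at each point the principal curvatures fall into one of three patterns:
\begin{itemize}
\item $g=1$ (an umbilic point);
\item $g=2$ with multiplicities $\{k,\,n-k-1\}$;
\item $g=3$ with multiplicities $(k,\,n-2k-1,\,k)$, up to the symmetric variant.
\end{itemize}

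\textbf{Step 2 (Ozawa's theorem on degenerate critical sets).} Let $\lambda$ be a principal curvature of multiplicity $m$ and $p$ the corresponding focal point. By Theorem~\ref{thm:Ozawa}, the critical component $S$ through $x$ is a compact, nondegenerate, taut critical manifold of dimension $m$, and $L_p$ is a perfect Morse--Bott function.
\begin{itemize}
\item Since $S$ is neither the absolute minimum set nor the absolute maximum set (unless $M=S$), the Morse--Bott count of total Betti number $4$ forces $S$ to have Betti sum $2$.
\item Then $S$ is a ${\bf Z}_2$-homology sphere, and by Theorem~\ref{thm:carter-west-taut-spheres} it is a metric $m$-sphere.
\item At an umbilic point $m=n-1$ and $S=M$. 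This is impossible, because $M$ does not have the homology of a sphere.
\end{itemize}
So there are no umbilics, and the Morse--Bott index $i$ of $S$ satisfies $i,\ i+m\in\{0,k,n-k-1,n-1\}$.

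\textbf{Step 3 (main obstacle: excluding $g=3$).} At points with three principal curvatures, Step~2 by itself gives no contradiction: the middle curvature sphere of dimension $n-2k-1$ contributes homology in degrees $k$ and $n-k-1$, which is consistent with the count. I would first try to exclude this case as follows.
\begin{itemize}
\item By Theorem~\ref{thm:taut-implies-semi-Dupin}, the points with $g=3$ and $g=2$ each carry Dupin principal foliations by metric spheres.
\item Compare the $k$-dimensional curvature spheres through a point. In the $g=3$ pattern there are two families of $k$-spheres on opposite sides. Using a distance function $L_p$ centered at a focal point of one of them, the critical set would have to contain $S^k$ at index $0$ and a second piece forced by the other $k$-family, giving too many critical components.
\item Alternatively, adapt Banchoff's ``lying between the two curvature spheres'' argument to show that $M$ lies between the tangent spheres centered at the extreme focal points, and deduce that the middle focal point cannot occur.
\end{itemize}

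\textbf{Step 4 (conclusion).} Once $g=2$ holds everywhere, $g$ is constant, so the Dupin property from Theorem~\ref{thm:taut-implies-semi-Dupin} makes $M$ proper Dupin, with multiplicities $k$ and $n-k-1$. Passing to $S^n$ by stereographic projection, Theorem~\ref{thm:C-R-math-ann} shows that $M$ is M\"obius equivalent to a standard product $S^k\times S^{n-k-1}$, that is, a cyclide of Dupin.
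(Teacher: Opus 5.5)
\textbf{There is a genuine gap: Step~3 is not proved, and neither of your sketches can work.}

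Steps~1, 2 and~4 agree with the paper's outline. The index theorem gives $g\in\{2,3\}$ at each point, taut implies Dupin, and Theorem~\ref{thm:C-R-math-ann} handles $g=2$. But Step~3 is where the whole difficulty lies, and you offer no argument there.

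Your idea (i) cannot give a pointwise contradiction. At a single point $x$ with multiplicities $(k,n-2k-1,k)$, every Morse--Bott count along the normal line is consistent with $P_M(t)=1+t^k+t^{n-k-1}+t^{n-1}$:
\begin{itemize}
\item the first focal point gives an index-$0$ critical $S^k$ contributing $1+t^k$, which leaves $t^{n-k-1}+t^{n-1}$ for the rest of the critical set;
\item the middle focal point gives an index-$k$ critical $S^{n-2k-1}$ contributing $t^k+t^{n-k-1}$, which leaves a minimum and a maximum;
\item the third focal point is symmetric to the first.
\end{itemize}
The second $k$-family has its focal point elsewhere on the normal line, so it forces nothing in the critical set of $L_p$.

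Your idea (ii) fails for a more basic reason. That $M$ lies outside the balls bounded by the first curvature spheres on each side follows from tautness alone. So it also holds for Cartan's isoparametric hypersurfaces with $g=3$, which are taut. No argument that ignores the homology of $M$ can exclude the middle focal point.

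\textbf{The missing idea.} Do not try to rule out $g=3$ pointwise; instead prove that $g$ is constant on $M$.
\begin{itemize}
\item The paper names this as the most difficult part of the proof. It needs genuinely global input; the paper points to the connectedness of the complement of the focal set (Theorem~\ref{thm:CR-8.11}) as an important ingredient in such arguments. In particular you must rule out an open set with $g=3$ whose boundary points have $g=2$.
\item Once $g$ is constant, Theorem~\ref{thm:taut-implies-dupin} makes $M$ (after stereographic projection into $S^n$) a compact proper Dupin hypersurface.
\item Theorem~\ref{thm:dupin-restriction-g}(2) then says the ${\bf Z}_2$-Betti numbers of $M$ sum to $2g$. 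Since they sum to $4$ for $S^k\times S^{n-k-1}$, you get $g=2$ at once.
\end{itemize}
Your proposal never uses this Betti-number formula and never addresses constancy of $g$, so as written it does not prove the theorem.

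\textbf{A smaller slip in Step~2.} The critical manifold at the first focal point is the absolute minimum set, so \emph{neither minimum nor maximum} is not automatic. This does not affect your exclusion of umbilics: there $\dim S=n-1$ already forces $S=M$.
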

To prove this,
one first uses the Index Theorem for $L_p$ functions to prove that
at each point of $M$, the number of distinct principal curvatures
must be either 2 or 3.  The most difficult part of the proof is to then
show that the number of distinct principal curvatures
must be constant on $M$.  Then since taut implies Dupin,
$M$ is a proper Dupin
hypersurface with $g =2$  or 3 principal curvatures.
Then it is fairly easy given the homology of $M$
to show that $g=2$, and so $M$ must be a cyclide of
Dupin.

Ozawa \cite{Oz} generalized this result by showing that
if an embedding of $S^k \times S^m, k < m$, into $S^n$
is taut and substantial, then the codimension of the embedding
is either 1 or $m-k+1$. He also showed that the $r$-times
connected sum of $S^k \times S^m, k < m$, cannot be
tautly embedded into any Euclidean space if $r>1$.

There is a related result which takes into account the
intrinsic geometry of $M$.  Recall that a Riemannian manifold
$(M,g)$ is said to be {\em conformally flat} if every point
has a neighborhood conformal to an open subset in Euclidean
space.  Schouten \cite{Sch} showed that a hypersurface
$M^n, n \geq 4$, 
immersed in ${\bf R}^{n+1}$ is conformally flat in the
induced metric if and only if at least $n-1$ of the principal
curvatures coincide at each point. (This characterization
fails when $n=3$, see \cite{Lan}.)  Using Schouten's
result, Theorem 8.6 and some basic results on tautness,
Cecil and Ryan \cite{CRCan} proved the following.
\begin{theorem}
\label{thm:C-R-conformally-flat}
Let $M^n, n \geq 4,$ be a connected manifold tautly embedded in ${\bf R}^{n+1}$.
Then $M^n$ is conformally flat in the induced metric if and
only if it is one of the following:\\
{\rm (a)} a hyperplane or metric sphere;\\
{\rm (b)} a cylinder over a circle or over an $(n-1)$-sphere;\\
{\rm (c)} a ring cyclide diffeomorphic to $S^1 \times S^{n-1}$;\\
{\rm (d)} a parabolic ring cyclide diffeomorphic to $(S^1 \times S^{n-1}) - \{p\}$.
\end{theorem}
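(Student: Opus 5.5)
By Schouten's theorem, the induced metric on $M^n$, $n\geq 4$, is conformally flat if and only if at each point at least $n-1$ of the principal curvatures coincide. So the plan is to classify taut hypersurfaces whose principal curvatures at every point are either all equal (an umbilic point) or split as one simple principal curvature $\mu$ and one of multiplicity $n-1$. The converse direction is routine. The examples (a)--(d) are taut: for the hyperplane, sphere and cylinders this follows from the standard examples, and the ring cyclides are stereographic images of isoparametric products $S^1\times S^{n-1}\subset S^{n+1}$, which are taut by Remark \ref{rem:isoparametic-hyp-taut}. Each example visibly has a principal curvature of multiplicity at least $n-1$ at every point, so Schouten's criterion gives conformal flatness.

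For the forward direction, let $U$ be the set of umbilic points. It is closed, and on $M-U$ there are exactly $g=2$ distinct principal curvatures, with multiplicities $1$ and $n-1$.

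\textbf{Case $U=M$.} Then $M$ is totally umbilic, hence a hyperplane or a metric sphere, giving (a).

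\textbf{Case $U$ empty.} Then $g=2$ is constant on $M$. Since taut implies Dupin (Theorem \ref{thm:taut-implies-dupin}), $M$ is proper Dupin, i.e.\ a cyclide of Dupin of characteristic $(1,n-1)$.
\begin{itemize}
\item If $M$ is compact, Theorem \ref{thm:C-R-math-ann} (or Theorem \ref{thm:taut-dupin} together with Cecil--Ryan) gives a ring cyclide $S^1\times S^{n-1}$, which is (c).
\item If $M$ is non-compact, I would use Theorem \ref{thm:pinkall-cyclides}: $M$ lies in a compact cyclide, which after a M\"obius map is a standard $S^1\times S^{n-1}$ with possibly points removed. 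Tautness together with completeness of a properly embedded taut hypersurface (its $L_p$ sublevel sets are compact) forces the removed set to be a single point. The possible outcomes are then the cylinders over $S^1$ or $S^{n-1}$ (when the point is removed in ${\bf R}^{n+1}$ at infinity in the appropriate way) and the parabolic ring cyclide. This gives (b) and (d).
\end{itemize}
Alternatively, the non-compact case could be handled by the homology argument of Theorem \ref{thm:carter-west-non-compact}.

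\textbf{Case $U$ a non-empty proper subset.} This is the main obstacle: I must show that it cannot occur. The plan is as follows.
\begin{itemize}
\item On a component $V$ of $M-U$, the multiplicity-$(n-1)$ principal curvature $\lambda$ has curvature surfaces that are open pieces of $(n-1)$-spheres, and $\lambda$ is constant along them.
\item Using Ozawa's Theorem \ref{thm:Ozawa}, applied to the focal point of $\lambda$ at a point of $V$ near a boundary point $x\in \partial U$, the critical set of that $L_p$ is a compact smooth $(n-1)$-manifold. It is therefore a full $(n-1)$-sphere curvature surface, and it contains points arbitrarily close to $x$.
\item Passing to the limit at $x$, the nullity of the Hessian jumps to $n$. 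Part (a) of Ozawa's theorem then forces the critical component through $x$ to be $n$-dimensional, hence all of $M$ (a sphere). This contradicts $V\neq\emptyset$.
\end{itemize}
If this limiting argument proves delicate, a fallback is the index-theorem approach of Theorem \ref{thm:C-R-taut-product-spheres}:
\begin{itemize}
\item The homology forced by the $(n-1)$-sphere leaves shows that $M$ has the ${\bf Z}_2$-homology of $S^1\times S^{n-1}$ or of $S^{n-1}\times{\bf R}$, or else is a sphere.
\item Then apply Theorem \ref{thm:C-R-taut-product-spheres} in the compact case, or Theorem \ref{thm:carter-west-non-compact} in the non-compact case, each of which directly yields the list.
\end{itemize}
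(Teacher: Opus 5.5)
Your compact case works, by a route different from the paper's. The paper combines Schouten's criterion and basic tautness facts with Theorem \ref{thm:C-R-taut-product-spheres}, which requires first establishing the homology of $S^1\times S^{n-1}$. You instead remove umbilic points with Ozawa's Theorem \ref{thm:Ozawa} and then quote Theorem \ref{thm:C-R-math-ann}. The limiting argument is unnecessary: at any umbilic $x$ with umbilic curvature $\lambda_0\neq 0$, the Hessian of $L_p$, $p=f(x)+\xi/\lambda_0$, has nullity $n$ at $x$. So the critical component through $x$ is all of $M$, and $M$ is a sphere (invert first if $\lambda_0=0$).

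The genuine gap is the non-compact half of the theorem, which contains the hyperplane, (b) and (d). Theorem \ref{thm:Ozawa} is stated only for compact $M$. Also, the inversion needed when $\lambda_0=0$ destroys properness. So your exclusion of the case $\emptyset\neq U\neq M$ does not apply to non-compact $M$. It can be repaired with elementary tautness arguments:
\begin{enumerate}
\item If $M$ is connected and non-compact, every component of a regular sublevel set $M_r(L_p)$ has non-empty boundary. Hence $\beta_n(M_r)=0$, and no Morse $L_p$ has an index-$n$ critical point.
\item Therefore no shape operator is definite (otherwise take $p=f(x)+t\xi$ with $t$ large and perturb), so every umbilic point is flat.
\item At a flat umbilic $x$, each $L_p$ with $p\neq f(x)$ on the normal line has a strict local minimum at $x$, hence a global one by $0$-tautness. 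So $M$ misses both open half-spaces bounded by the tangent hyperplane at $f(x)$, and therefore equals that hyperplane.
\end{enumerate}

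Your umbilic-free non-compact argument is also incorrect as written. Theorem \ref{thm:pinkall-cyclides} gives a compact Legendre cyclide that is only Lie equivalent to a standard product. Up to M\"{o}bius transformation it is a cyclide of revolution as in Theorem \ref{thm:cyclides-moebius}, and it is singular when $\rho\geq 1$. The cylinders are not M\"{o}bius images of a punctured embedded $S^1\times S^{n-1}$; for instance, $S^{n-1}\times{\bf R}$ has $H_1=0$. They are horn cyclides ($\rho=1$) projected from their singular point, and your claim that ``the removed set is a single point'' has no supporting argument. What works is this: $M$ is closed in $S^{n+1}\setminus\{P\}$, where $P$ is the pole, so its closure in $S^{n+1}$ adds only $P$. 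That point must then absorb every singular point of the cyclide adjacent to $M$. This leaves two possibilities: the ring type with $P$ on it, giving the parabolic ring cyclide, or the horn type with $P$ at the singular point, giving the two cylinders according to which factor degenerates. It rules out the spindle type.

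Neither fallback closes this. Theorem \ref{thm:carter-west-non-compact} needs homology concentrated in a single positive degree, so it can never produce the parabolic ring cyclide, whose homology is nonzero in degrees $0$, $1$ and $n-1$. And the homology list in your fallback omits the types of $S^1\times{\bf R}^{n-1}$ and of punctured $S^1\times S^{n-1}$.
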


In certain specific 
classification results involving a taut submanifolds in $S^n$ or ${\bf R}^n$, an important step is proving that 
the complement of the focal set in the ambient space is connected (see, for example, Carter-West \cite{CW1}, Cecil-Ryan \cite{CRMA}, \cite[pp. 336--338]{CR8}).  In 2008, Cecil, Chi and Jensen \cite[p. 237]{CCJ3} showed that this is always the case by proving the following theorem (see also \cite[pp. 339--340]{CR8} for a proof).

\begin{theorem}
\label{thm:CR-8.11}
Let $M \subset S^n$ be a connected taut submanifold of $S^n$. Then the complement of the focal set of $M$ in $S^n$ is connected.
\end{theorem}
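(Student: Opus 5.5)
Since $M$ is taut, it is Dupin by Theorem \ref{thm:taut-implies-dupin}, and more precisely it satisfies the conclusions of Ozawa's Theorem \ref{thm:Ozawa} and Theorem \ref{thm:taut-implies-semi-Dupin}. The plan is to show that the focal set $\Gamma$ of $M$ has codimension at least two in $S^n$, at least off a set of codimension at least two. A closed set of that size cannot disconnect the connected manifold $S^n$. The reason is a general-position fact: any path in $S^n$ between two points off $\Gamma$ can be perturbed to miss a closed subset that is a countable union of submanifolds of codimension $\geq 2$.

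First, describe the focal set through the focal maps. By Sard, the focal set is the image under the normal exponential map $E$ of the critical set $C \subset B^{n-1}\times(0,\pi)$ (parametrizing the normal bundle by unit normals and distance). At a point $(x,\xi,t)$ of $C$, $\cot t$ is a principal curvature of $A_\xi$ of some multiplicity $m\geq 1$. By Theorem \ref{thm:taut-implies-semi-Dupin}(b) and the proof there, the component of $E^{-1}(p)$ through $(x,\xi,t)$ is an $m$-dimensional curvature surface $S$, along which $E$ is constant. Moreover, by Ozawa (b) this critical manifold is nondegenerate, so the nullity of $dE$ along $S$ is exactly $m = \dim S$.

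Next, the dimension count. Stratify $C$ by the multiplicity $m$; on an open dense subset of each stratum the multiplicity is locally constant and the principal foliation is smooth, as in the discussion after Definition \ref{def:dupin}. On such a piece, $C_m$ is locally a hypersurface in the $n$-dimensional manifold $B^{n-1}\times(0,\pi)$, namely the level set $t = \cot^{-1}\lambda(x,\xi)$, so it has dimension $n-1$. The map $E$ restricted to it is constant along $m$-dimensional leaves, so its image has dimension at most $n-1-m \leq n-2$. That is, the focal map factors through the leaf space, exactly as in the hypersurface case in Section \ref{sec:dupin-submanifolds}.

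The main obstacle is the complement of these open dense pieces: the points where multiplicities jump, which is where the Dupin condition is not proper. Here I would use Ozawa's theorem directly rather than smooth foliations. For every $p \in \Gamma$, $E^{-1}(p)$ is a union of compact submanifolds of dimension equal to the nullity, and those nullities are $\geq 1$. A fibre-dimension argument then shows that $\Gamma = E(C)$ is a countable union of sets of dimension at most $\dim C - 1 \leq n-2$. Concretely, I would decompose $C$ into countably many pieces on which the fibre dimension is constant, and apply a rank or Hausdorff-dimension estimate on each piece; semi-algebraic or analytic structure would help if needed. Failing that, I would fall back on the Čech formulation: assuming two components of $S^n\setminus\Gamma$, compare the Morse counts of $l_p$ for $p$ in each component. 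Both give $\sum \beta_i(M;{\bf Z}_2)$ critical points, and following $p$ along a path crossing $\Gamma$ should produce a contradiction with the index jumps. Having established the codimension-two bound, connectedness follows from general position, completing the proof.
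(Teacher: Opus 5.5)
\textbf{There is a genuine gap at the step that carries the whole proof.} Your strategy, showing that the focal set $\Gamma$ is too thin to separate $S^n$, is viable. Over the open dense set $\Omega\subset B^{n-1}$ where the multiplicities are locally constant, your leaf-space count is correct. Note, though, that $\Omega$ is open and dense in $B^{n-1}$, not in each stratum $C_m$. A whole stratum, e.g.\ the multiplicity-two focal points where two principal curvature functions cross, can lie over $B^{n-1}\setminus\Omega$, where there is no smooth principal foliation.

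The real content of the theorem is the focal set coming from $B^{n-1}\setminus\Omega$, a closed nowhere dense set you have no a priori control over. There you only assert that ``a fibre-dimension argument'' gives dimension at most $n-2$. This does not go through as stated:
\begin{enumerate}
\item For merely continuous maps, positive-dimensional fibres force nothing. A projection $[0,1]^3\to[0,1]$ followed by a Peano curve onto $[0,1]^3$ has two-dimensional fibres and full-dimensional image. So the argument must use the regularity of $E$ essentially, and you do not say how.
\item Your nullity strata are not manifolds, so no rank estimate applies to them.
\item The rescue via analytic or semi-algebraic structure is unavailable. Algebraicity of taut submanifolds is exactly Kuiper's question, settled only under extra hypotheses such as proper Dupin.
\item The Morse-theoretic fallback cannot work. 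Tautness gives $\mu_k(l_p)=\beta_k(M;{\bf Z}_2)$ for every non-focal $p$ in every component, so no count distinguishes components.
\item ``Codimension two plus general position'' as you state it covers only countable unions of genuine submanifolds, which the bad locus has not been shown to be.
\end{enumerate}

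\textbf{What is missing is a regularity input} that turns Ozawa's fibre information into a measure estimate uniformly, including where multiplicities jump. One way to supply it:
\begin{enumerate}
\item The ordered eigenvalues $\lambda_1\ge\cdots\ge\lambda_k$ of $A_\xi$ are Lipschitz on $B^{n-1}$ by Weyl's inequality. Hence so are the focal maps $f_i(x,\xi)=\cos\theta_i\,x+\sin\theta_i\,\xi$ with $\cot\theta_i=\lambda_i$, and $\Gamma=\bigcup_i f_i(B^{n-1})$ is compact.
\item By Ozawa's theorem, through every $b\in B^{n-1}$ passes an embedded submanifold of dimension $m\ge 1$ on which $f_i$ is constant. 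It is the lift to $B^{n-1}$ of the critical manifold of $L_{f_i(b)}$ through the base point of $b$. The index $i$ stays in the same eigenvalue block along it: the multiplicity of $\cot\theta_i$ is constant there, so the number of larger eigenvalues is locally constant.
\item Hence $df_i$ has nontrivial kernel at every point where $f_i$ is differentiable, which is almost everywhere by Rademacher. The $(n-1)$-Jacobian therefore vanishes a.e., and the area formula gives $\mathcal{H}^{n-1}(f_i(B^{n-1}))=0$.
\item A compact set of $\mathcal{H}^{n-1}$-measure zero cannot separate $S^n$. Near a common boundary point of two putative components, join small balls in each by parallel segments. The segments meeting $\Gamma$ are parametrized by a projection of $\Gamma$, which has $(n-1)$-measure zero.
\end{enumerate}
This handles the proper and non-proper Dupin loci at once, with no stratification needed.
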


\subsubsection*{Classifications of taut 3-manifolds}

Concerning taut embeddings of 3-manifolds, Pinkall and
Thorbergsson \cite{PT1} have proven the following result.
\begin{theorem}
\label{thm-Pinkall-Thorb-3d}
A compact taut 3-dimensional submanifold in Euclidean space
is diffeomorphic to one of the following seven manifolds:\\
$S^3$, ${\bf RP}^3$, the quaternion space $S^3/\{\pm 1, \pm i,
\pm j, \pm k\}$, the 3-torus $T^3$,
$S^1 \times S^2$, $S^1 \times {\bf RP}^2$,
$S^1 \times_h S^2$, where $h$ denotes an orientation reversing
diffeomorphism of $S^2$.
Furthermore, all of these manifolds admit taut embeddings.
\end{theorem}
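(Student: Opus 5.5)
The plan is to prove the restriction on the diffeomorphism type with Morse–Bott theory, using Ozawa's Theorem~\ref{thm:Ozawa} as the main tool, and then to settle existence by exhibiting examples. By stereographic projection we may assume $M^3 \subset S^n$ is substantial and spherical. Theorem~\ref{thm:kuiper-bound-on-codimension} gives $n \leq 9$ in the Euclidean picture, with equality only for ${\bf RP}^3$, and throughout we may use linear height functions $l_p$ in place of distance functions.

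The first step is to choose the height function well. Fix a point $x \in M$ and a unit normal $\xi$ there. Let $\lambda$ be the largest principal curvature of $A_\xi$, with multiplicity $m \in \{1,2,3\}$. Let $p$ be the corresponding focal point. By Ozawa's theorem and the proof of Theorem~\ref{thm:taut-implies-semi-Dupin}, the component $S$ of the critical set of $L_p$ through $x$ is a compact curvature surface of dimension $m$. It is non-degenerate as a critical manifold and is itself taut. By Theorem~\ref{thm:carter-west-taut-spheres} (or Banchoff for $m=1$), $S$ is a metric $m$-sphere. Moreover, $L_p$ is a perfect Morse–Bott function whose critical manifolds are all compact taut submanifolds of dimension at most $3$. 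Hence each critical manifold is one of the following: a point, a round circle, a surface from the list in Theorem~\ref{thm:taut-surfaces}, or all of $M$.

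The case $m=3$ makes $M$ totally umbilic, so $M=S^3$ by Theorem~\ref{thm:nomizu-rodriguez}. Otherwise I would look at the extreme critical manifolds of suitable functions of this type, namely the absolute minimum and the absolute maximum. Perfectness with ${\bf Z}_2$ coefficients forces $M$ to be obtained by gluing a tubular (disk-bundle) neighborhood of the minimum set to one of the maximum set, possibly with handles attached at intermediate critical levels. The count of handles is controlled by the ${\bf Z}_2$-Betti numbers of $M$. Using this, the strategy is to show that some such function has only two critical manifolds. Then $M$ is a union of two disk bundles over spheres, circles, ${\bf RP}^2$, or tori, glued along their common boundary. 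When the two critical manifolds are a circle and a $2$-dimensional taut surface, this yields $S^1\times S^2$, $S^1\times_h S^2$, $S^1\times{\bf RP}^2$ or $T^3$. When both are circles (the case $g=3$-like, with three principal curvatures of multiplicity one), the Dupin structure and the classification of taut surfaces should force the quotient ${\bf RP}^3$ or the quaternion space.

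I expect the main obstacle to be controlling points where the number of distinct principal curvatures jumps. There, $M$ is only Dupin and not proper Dupin, so Thorbergsson's ball bundle theorem (Theorem~\ref{thm:dupin-ball-bundle}) is not directly available. My first attempt would be to use the connectedness of the complement of the focal set (Theorem~\ref{thm:CR-8.11}) to move $p$ within a single component of non-focal points while keeping the critical-point structure of $L_p$ fixed. This would let me select a function whose critical manifolds all have the maximal dimension.

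For existence, I would exhibit explicit examples:
\begin{itemize}
\item $S^3$: a round sphere;
\item $T^3$: a product of three circles in $S^5$;
\item $S^1\times S^2$: a cyclide of Dupin;
\item the quaternion space: Cartan's isoparametric hypersurface in $S^4$;
\item ${\bf RP}^3$: the tube construction over a spherical Veronese surface, or the standard embedding;
\item $S^1\times{\bf RP}^2$ and $S^1\times_h S^2$: Pinkall-type surface-of-revolution constructions applied to a Veronese surface and to a sphere rotated with a reflection.
\end{itemize}
Each of these is taut, either by Theorem~\ref{thm:taut-codimension-greater-than-1} or by invariance of tautness under tubes.
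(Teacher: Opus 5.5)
\paragraph{The two-critical-manifold reduction.} The step your whole classification rests on, ``show that some such function has only two critical manifolds,'' is given no argument, and it is false for a manifold on the list.

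By Ozawa's theorem and tautness, $L_p$ is a ${\bf Z}_2$-perfect Morse--Bott function. Hence $P_M(t)=\sum_C t^{{\rm ind}\,C}P_C(t)$, where each critical manifold $C$ is a compact taut manifold of dimension $\le 2$, i.e.\ a point, a circle, $S^2$, $T^2$ or ${\bf RP}^2$. The quaternion space has $H_1={\bf Z}_2\oplus{\bf Z}_2$, so $P_M=1+2t+2t^2+t^3$. The only two-term solutions are $\{S^1,T^2\}$ and $\{{\bf RP}^2,{\bf RP}^2\}$. In an orientable $3$-manifold a torus is two-sided, so its tubular neighborhood has two boundary tori and cannot be glued to a $D^2$-bundle over a circle. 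Two one-sided ${\bf RP}^2$'s glue to ${\bf RP}^3\#{\bf RP}^3$. So no such function exists on the quaternion space. Indeed, on Cartan's hypersurface, $l_p$ with $p$ on a focal Veronese surface has three critical circles, of indices $0,1,2$.

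The same count shows your case matching is wrong:
\begin{itemize}
\item Two circles give $\sum\beta_i=4$, so they cannot produce the quaternion space.
\item A circle and a surface can be glued only when the surface is a one-sided torus. That gives $\sum\beta_i=6$ (this is how $S^1\times{\bf RP}^2$ arises), never $T^3$, $S^1\times S^2$ or $S^1\times_h S^2$.
\end{itemize}

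More seriously, even when only two critical manifolds occur, the gluings you allow produce manifolds with the right ${\bf Z}_2$-Betti numbers that are not on the list. Examples are the lens spaces $L(2k,q)$ with $k\ge 2$, from two circles, and ${\bf RP}^3\#{\bf RP}^3$. Excluding them needs genuinely geometric information about how the taut critical manifolds and curvature surfaces sit in $M$. That is the actual content of Pinkall and Thorbergsson's theorem, which the survey only cites without proof. Your phrase ``the Dupin structure \dots\ should force'' leaves exactly this unproved.

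\paragraph{Supporting claims that fail.}
\begin{itemize}
\item In codimension $>1$ the curvature surface $S$ need not be a round sphere. Take the product $T^3=S^1(r_1)\times S^1(r_2)\times S^1(r_3)\subset S^5$ and a normal whose largest principal curvature has the first two circle directions as eigenspace. The critical component through $x$ is then a torus. Carter--West applies only if you already know $S\cong S^m$.
\item The case $m=3$ does not give total umbilicity. For the normal $-x$ of $S^n\subset{\bf R}^{n+1}$ it occurs at every point; in general it only says $M$ lies in a hypersphere.
\item Moving $p$ in the complement of the focal set cannot select positive-dimensional critical manifolds. For non-focal $p$, $L_p$ is Morse with isolated critical points.
\item A tube over the Veronese surface in $S^4$ is Cartan's hypersurface, i.e.\ the quaternion space, not ${\bf RP}^3$. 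For ${\bf RP}^3$ use $V_{2,3}\subset S^5$ or $SO(3)\subset S^8$.
\item A rotation construction yields a product $W\times S^1$, so it cannot give $S^1\times_h S^2$. The complexified sphere $\{e^{i\theta}x\}\subset S^5$ does.
\item The product $T^3$ is not proper Dupin. Its tautness comes from products of tight spherical embeddings being tight and spherical, not from the proper Dupin theorem.
\end{itemize}
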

Pinkall and Thorbergsson \cite{PT1} gave more detail about these embeddings,
and we summarize their results and examples below.
Since tautness is invariant under stereographic projection, they
classified {\em spherically substantial} taut embeddings, i.e.,
those which do not lie in any hypersphere.  In the description below,
the codimension means the spherically substantial
codimension.

A taut embedding of $S^3$ is a metric hypersphere, as noted in Theorem \ref{thm:carter-west-taut-spheres}. Real 
projective space  ${\bf RP}^3$
can be tautly embedded with codimension 2 as the Stiefel
manifold $V_{2,3} \subset S^5 \subset {\bf R}^6$ of orthornormal 2-frames in ${\bf R}^3$,
and with
codimension 5 as $SO(3)$ in the unit sphere in the space
of $3 \times 3$ matrices.  Pinkall and Thorbergsson did not determine whether the codimensions
3 and 4 are possible.  

The quaternion space is embedded as
Cartan's isoparametric hypersurface in $S^4$ (see \cite[pp. 151--155]{CR8}), where it is unique
up to Lie equivalence, and no other codimensions are possible.
The 3-torus can be tautly embedded with codimension one as a
tube in ${\bf R}^4$ around a torus of revolution 
$T^2 \subset {\bf R}^3 \subset {\bf R}^4$, and with codimension 2 as a product
$T^2 \times S^1 \subset {\bf R}^5$. 

The space $S^1 \times S^2$ can be tautly embedded
with codimension 1
as a cyclide of Dupin (see Theorem \ref{thm:C-R-taut-product-spheres}), and no other codimension
is possible.  The manifold  $S^1 \times {\bf RP}^2$ can be
tautly embedded with substantial codimension 3 as the product of a 
metric circle and a spherical Veronese surface. It can be tautly
embedded with codimension 2 as a rotational
submanifold with profile submanifold  ${\bf RP}^2$, 
and the only codimensions
possible are 2 and 3.  

Finally, $S^1 \times_h S^2$ can be tautly embedded with codimension 2 as the
``complexified unit sphere"
\begin{equation}
\label{eq:complexified-unit-sphere}
\{e^{i\theta} x \mid \theta \in {\bf R}, x \in S^2 \subset {\bf R}^3\}
\subset S^5 \subset {\bf C}^3.
\end{equation}
This is one of the focal submanifolds of Cartan's \cite{Car4} homogeneous family
of isoparametric hypersurfaces with four principal curvatures
in $S^5$ (see also \cite[pp. 155--159]{CR8}).
No other codimensions are possible for
a taut embeddding of $S^1 \times_h S^2$.

\subsubsection*{Other results on taut submanifolds}

\begin{remark}{Taut embeddings of 4-manifolds}
\label{rem:taut-4-man}

\noindent
{\rm In a nice survey article on taut submanifolds, Gorodski \cite{Gorodski-06} 
obtained some partial results on taut embeddings of 4-manifolds into spheres.  Let $M$ be a compact, connected smooth 4-dimensional taut
submanifold of a sphere $S^n$ for some $n$.  Gorodski showed that if $M$ has vanishing first Betti number, then $M$ is diffeomorphic to $S^4$, $S^2 \times S^2$ or ${\bf CP}^2$, and if
$M$ has vanishing second Betti number, then $M$ is diffeomorphic to $S^4$ or $S^1 \times S^3$.}
\end{remark}

\begin{remark}{Taut embeddings of homogeneous spaces}
\label{rem:taut-homog-spaces}

\noindent
{\rm Many important examples of taut embeddings are homogeneous
spaces, e.g., principal orbits of isotropy representations of symmetric spaces.  Thorbergsson \cite{Th5}
found some necessary topological conditions for the existence of a taut
embedding which enabled him to prove that certain homogeneous
spaces do not admit taut embeddings.  Similarly, Hebda
\cite{Heb3} found certain necessary cohomological conditions
for the existence of a taut embedding, and he used
these results to give examples of manifolds which cannot be tautly
embedded. In the case where $M$ is a compact homogeneous
submanifold substantially embedded in Euclidean space
with flat normal bundle, 
Olmos \cite{Olm1} showed that the following statements are
equivalent:} 
\begin{enumerate}
\item[${\rm(a)}$] $M$ is taut; 
\item[${\rm(b)}$] $M$ is Dupin;
\item[${\rm(c)}$] $M$ has constant principal curvatures;
\item[${\rm(d)}$] $M$ is an orbit of the isotropy representation of a symmetric space;
\item[${\rm(e)}$] the first normal space of $M$ coincides with the normal space.
\end{enumerate}
\end{remark}

\begin{remark}{Taut representations}
\label{rem:taut-representations}

\noindent
{\rm Gorodski and Thorbergsson \cite{Gorodski-Thor-02}--\cite{Gorodski-Thor-03}  studied {\em taut representations}, 
i.e., representations of compact Lie groups all of whose orbits are tautly embedded. Bott and Samelson \cite{BS} proved that isotropy representations of symmetric spaces (also called $s$-representations) are taut.  For a long time, the $s$-representations were the only known examples of taut representations,
but in the paper \cite{Gorodski-Thor-03}, Gorodski and Thorbergsson classified taut irreducible representations of compact Lie groups.  Their classification includes three 
families of representations that are not $s$-representations, thereby supplying many new examples of tautly embedded homogeneous spaces.  In a subsequent paper, Gorodski \cite{Gorodski-08}
gave a complete classification of all taut representations of compact simple Lie groups. 

In related work, the class of polar representations was introduced by Dadok and Kac \cite{Dadok-Kac} in 1985. 
In that same year, Dadok \cite{Dadok} proved that a polar representation of a compact Lie group 
has the same orbits as the isotropy representation of a Riemannian symmetric space.   More recently, 
Geatti and Gorodski \cite{Geatti} extended this theory
by showing that a polar orthogonal representation of a connected real reductive
algebraic group has the same closed orbits as the isotropy representation of a semi-Riemannian symmetric space.

In a related area, a proper isometric action of a Lie group $G$ on a Riemannian manifold $M$ is called {\em polar}  if there exists a connected, complete submanifold $\Sigma$ (called a {\em section}) that meets all orbits of $G$
orthogonally.  A basic result is that a section $\Sigma$ is a totally geodesic submanifold of $M$.  Biliotti and Gorodsky \cite{Biliotti-Gorodski-07}
proved that the orbits 
of a polar action of a compact Lie group on a compact rank one
symmetric space are ${\bf Z}_2$-taut.}
\end{remark}

\begin{remark}{Cylindrically taut immersions}
\label{rem:cylindrically-taut}

\noindent
{\rm Carter, Mansour and West \cite{CMW}, \cite{CW5} introduced a notion of
$k$-{\em cylindrical taut} immersion  $f:M \rightarrow {\bf R}^n$
by using distance functions
from $k$-planes in  ${\bf R}^n$ (see also Carter and \c{S}ent\"{u}rk \cite{CS}, and Carter and West \cite{CW5}).
For $k = 0$, this is equivalent
to tautness, and for $k = n-1$ it is equivalent to tightness.
This theory turns out to closely related to the theory of convex
sets and many of the results concern embeddings of spheres.
(See also Wegner \cite{Weg} for more on cylindrical distance
functions.)}
\end{remark}

\bigskip
\noindent Department of Mathematics and Computer Science\\
\noindent College of the Holy Cross\\
\noindent Worcester, Massachusetts 01610, U.S.A.\\

\noindent e-mail: tcecil@holycross.edu

\end{document}